\providecommand{\U}[1]{\protect\rule{.1in}{.1in}}
\newtheorem{theorem}{Theorem}
\theoremstyle{plain}
\newtheorem{corollary}{Corollary}
\newtheorem{example}{Example}
\newtheorem{lemma}{Lemma}
\newtheorem{proposition}{Proposition}
\newtheorem{remark}{Remark}
\numberwithin{equation}{section}
\numberwithin{equation}{section}
\numberwithin{theorem}{section}
\numberwithin{lemma}{section}
\numberwithin{remark}{section}
\numberwithin{example}{section}
\numberwithin{proposition}{section}
\numberwithin{definition}{section}
\numberwithin{corollary}{section}
\begin{document}
\title[Noncommutative complex analytic geometry]{Noncommutative complex analytic geometry of a contractive quantum plane}
\author{Anar Dosi}
\address{College of Mathematical Sciences, Harbin Engineering University, Nangang
District, Harbin, 150001, China }
\email{(dosiev@yahoo.com), (dosiev@hrbeu.edu.cn)}
\date{October 25, 2024}
\subjclass[2000]{ Primary 47A60; Secondary 47A13,.46L10}
\keywords{The quantum plane, Banach quantum plane, noncommutative Fr\'{e}chet algebra
presheaf, Harte spectrum, Taylor spectrum, noncommutative holomorphic
functional calculus}

\begin{abstract}
In the paper we investigate the Banach space representations of Manin's
quantum $q$-plane for $\left\vert q\right\vert \neq1$. The Arens-Michael
envelope of the quantum plane is extended up to a Fr\'{e}chet algebra presheaf
over its spectrum. The obtained ringed space represents the geometry of the
quantum plane as a union of two irreducible components being copies of the
complex plane equipped with the $q$-topology and the disk topology,
respectively. It turns out that the Fr\'{e}chet algebra presheaf is
commutative modulo its Jacobson radical, which is decomposed into a
topological direct sum. The related noncommutative functional calculus problem
and the spectral mapping property are solved in terms of the noncommutative
Harte spectrum.

\end{abstract}
\maketitle

\section{Introduction\label{secInt}}

The main principle of noncommutative geometry is to assign a geometric object
to a noncommutative (associative) algebra over a fixed field, which can be
restored through the global sections of the structure sheaf of that geometric
space. By a geometric object we mean a ringed space $\left(  X,\mathcal{O}%
_{X}\right)  $ which consists of a topological space $X$ and the structure
sheaf $\mathcal{O}_{X}$ of noncommutative algebras whose algebra
$\Gamma\left(  X,\mathcal{O}_{X}\right)  $ of the global sections is reduced
to a given noncommutative algebra $A$. In algebraic geometry that relation
defines an anti-equivalence between the categories of the commutative rings
and the affine schemes. In the complex analytic geometry we have an
anti-equivalence between the categories of locally compact topological spaces
and the commutative $C^{\ast}$-algebras. That duality allows us to have some
geometric constructions in the algebraic interpretations and vice-verse.
Within the framework of noncommutative geometry, von Neumann algebras are
treated as noncommutative measures whereas the theory of $C^{\ast}$-algebras
is refereed to noncommutative topology.

Noncommutative complex analytic geometry deals with the Banach space
representations of a noncommutative complex algebra. A geometric space
$\left(  X,\mathcal{O}_{X}\right)  $ of a finitely generated noncommutative
complex algebra $A$ consists of an analytic space $X$ and a noncommutative
Fr\'{e}chet algebra (pre)sheaf $\mathcal{O}_{X}$ so that $\Gamma\left(
X,\mathcal{O}_{X}\right)  $ represents the noncommutative algebra of all
entire functions in the generators of $A$. The abstract characterization of
the algebra of all entire functions was proposed by A. Ya. Helemskii
\cite[5.2.21]{Hel} and J. L. Taylor \cite{Tay2}, which is well known as the
Arens-Michael envelope of an algebra. The Arens-Michael envelope of a complex
algebra $A$ is the completion of $A$ with respect to the family of all
multiplicative seminorms defined on $A$. If $A$ is the algebra of all
polynomial functions on a complex affine algebraic variety $X$, then the
Arens-Michael envelope of $A$ turns out to be the algebra of holomorphic
functions on $X$ \cite{Pir}. In particular, the Arens-Michael envelope of the
algebra $A=\mathbb{C}\left[  x_{1},\ldots,x_{n}\right]  $ of all complex
polynomials in $n$-variables is the Fr\'{e}chet algebra $\mathcal{O}\left(
\mathbb{C}^{n}\right)  $ of all entire functions on $\mathbb{C}^{n}$. In the
case of a noncommutative polynomial algebra $A$, its Arens-Michael envelope
represents the algebra of all entire functions in noncommutative variables
generating $A$. If $A=\mathcal{U}\left(  \mathfrak{g}\right)  $ is the
universal enveloping algebra of a finite dimensional nilpotent Lie algebra
$\mathfrak{g}$ then its Arens-Michael envelope $\mathcal{O}_{\mathfrak{g}}$ is
the algebra of all noncommutative entire function in elements of
$\mathfrak{g}$, and the absolute basis problem in $\mathcal{O}_{\mathfrak{g}}$
was investigated in \cite{DosIzv}. More detailed and deep analysis of the
Arens-Michael envelopes were reflected in \cite{Pir06}, \cite{Pir16} and
\cite{Pir} by A. Yu. Pirkovskii.

The present paper is devoted to a noncommutative complex analytic geometry of
Yu. I. Manin's quantum plane \cite{M} including the related noncommutative
functional calculus and the joint spectral theory. The quantum plane (or just
$q$-plane) is the free associative algebra
\[
\mathfrak{A}_{q}=\mathbb{C}\left\langle x,y\right\rangle /\left(
xy-q^{-1}yx\right)
\]
generated by $x$ and $y$ modulo the relation $xy=q^{-1}yx$, where
$q\in\mathbb{C}\backslash\left\{  0,1\right\}  $. The Arens-Michael envelope
of $\mathfrak{A}_{q}$ is denoted by $\mathcal{O}_{q}\left(  \mathbb{C}%
^{2}\right)  $. If $x$ and $y$ are invertible additionally, then the algebra
represents the quantum $2$-torus. The properties of the Arens-Michael
envelopes of the quantum tori were investigated in \cite{Pir09}. If
$\left\vert q\right\vert \neq1$, then we deal with the contractive quantum
plane. Our first task is to reveal a ringed space that stands for the
contractive $q$-plane $\mathfrak{A}_{q}$. The Arens-Michael envelope
$\mathcal{O}_{q}\left(  \mathbb{C}^{2}\right)  $ representing the algebra of
all noncommutative entire functions (see also \cite{Goor}) in $x$ and $y$
consists of the following absolutely convergent power series
\begin{align*}
\mathcal{O}_{q}\left(  \mathbb{C}^{2}\right)   &  =\left\{  f=\sum_{i,k}%
a_{ik}x^{i}y^{k}:\left\Vert f\right\Vert _{\rho}=\sum_{i,k}\left\vert
a_{ik}\right\vert \rho^{i+k}<\infty,\rho>0\right\}  \text{ if }\left\vert
q\right\vert \leq1,\\
\mathcal{O}_{q}\left(  \mathbb{C}^{2}\right)   &  =\left\{  f=\sum_{i,k}%
a_{ik}x^{i}y^{k}:\left\Vert f\right\Vert _{\rho}=\sum_{i,k}\left\vert
a_{ik}\right\vert \left\vert q\right\vert ^{-ik}\rho^{i+k}<\infty
,\rho>0\right\}  \text{ if }\left\vert q\right\vert \geq1
\end{align*}
(see \cite[5.14]{Pir}). The case of $\left\vert q\right\vert >1$ can be
reduced to the case of $\left\vert q\right\vert <1$ by flipping the variables
$x$ and $y$, thereby whatever construction over the quantum plane done for
$\left\vert q\right\vert <1$ can be conveyed to the case of $\left\vert
q\right\vert >1$ too. As in the commutative case, the space of all continuous
characters $\operatorname{Spec}\mathcal{O}_{q}\left(  \mathbb{C}^{2}\right)  $
of the Arens-Michael envelope of $\mathfrak{A}_{q}$ stands for the
noncommutative "analytic" space, whose structure would be given by a
(pre)sheaf $\mathcal{O}_{q}$ over $\operatorname{Spec}\mathcal{O}_{q}\left(
\mathbb{C}^{2}\right)  $ of noncommutative Fr\'{e}chet $\widehat{\otimes}%
$-algebras extending the algebra $\mathcal{O}_{q}\left(  \mathbb{C}%
^{2}\right)  $.

It turns out that $\mathcal{O}_{q}\left(  \mathbb{C}^{2}\right)
=\mathcal{I}_{x}\oplus\mathcal{I}_{xy}\oplus\mathcal{I}_{y}$ is a topological
direct sum of the closed unital subalgebra $\mathcal{I}_{x}$ generated by $x$,
the closed two-sided ideal $\mathcal{I}_{xy}$ generated by $xy$, and the
closed non-unital subalgebra $\mathcal{I}_{y}$ generated by $y$ (see
Subsection \ref{sAME}). In this case, $\operatorname{Rad}\mathcal{O}%
_{q}\left(  \mathbb{C}^{2}\right)  \subseteq\mathcal{I}_{xy}=\cap\left\{
\ker\left(  \lambda\right)  :\lambda\in\operatorname{Spec}\left(
\mathcal{O}_{q}\left(  \mathbb{C}^{2}\right)  \right)  \right\}  $ and%
\[
\operatorname{Spec}\left(  \mathcal{O}_{q}\left(  \mathbb{C}^{2}\right)
\right)  =\operatorname{Spec}\left(  \mathcal{O}_{q}\left(  \mathbb{C}%
^{2}\right)  /\mathcal{I}_{xy}\right)  =\mathbb{C}_{xy},
\]
where $\mathbb{C}_{xy}=\mathbb{C}_{x}\cup\mathbb{C}_{y}$, $\mathbb{C}%
_{x}=\mathbb{C\times}\left\{  0\right\}  \subseteq\mathbb{C}^{2}$,
$\mathbb{C}_{y}=\left\{  0\right\}  \times\mathbb{C\subseteq C}^{2}$. If
$\left\vert q\right\vert \neq1$ then the algebra $\mathcal{O}_{q}\left(
\mathbb{C}^{2}\right)  $ is commutative modulo its Jacobson radical
$\operatorname{Rad}\mathcal{O}_{q}\left(  \mathbb{C}^{2}\right)  $ and
$\mathcal{I}_{xy}=\operatorname{Rad}\mathcal{O}_{q}\left(  \mathbb{C}%
^{2}\right)  $, that is, the topological direct sum decomposition
\[
\mathcal{O}_{q}\left(  \mathbb{C}^{2}\right)  =\mathcal{O}\left(
\mathbb{C}_{x}\right)  \oplus\operatorname{Rad}\mathcal{O}_{q}\left(
\mathbb{C}^{2}\right)  \oplus\mathcal{I}_{y}%
\]
holds. Thus the noncommutative analytic space $\mathbb{C}_{xy}=\mathbb{C}%
_{x}\cup\mathbb{C}_{y}$ is a union of two copies of the complex plane. One
needs to equip $\mathbb{C}_{xy}$ with a suitable topology that would affiliate
to the noncommutative multiplication of the quantum plane. We endow
$\mathbb{C}_{x}$ with the $\mathfrak{q}$-topology (see Subsection
\ref{subsecQT}) given by the $q$-spiraling open subsets of $\mathbb{C}$,
whereas $\mathbb{C}_{y}$ is equipped with the disk topology $\mathfrak{d}$
given by all open disks in $\mathbb{C}$ centered at the origin. Both are
non-Hausdorff topologies with their unique generic point zero, and they are
weaker than the original topology of the complex plane. The space
$\mathbb{C}_{xy}$ is equipped with the final topology so that both embedding
\[
\left(  \mathbb{C}_{x},\mathfrak{q}\right)  \hookrightarrow\mathbb{C}%
_{xy}\hookleftarrow\left(  \mathbb{C}_{y},\mathfrak{d}\right)
\]
are continuous, which is called $\left(  \mathfrak{q},\mathfrak{d}\right)
$\textit{-topology of} $\mathbb{C}_{xy}$. It is the union $\mathbb{C}%
_{xy}=\mathbb{C}_{x}\cup\mathbb{C}_{y}$ of two irreducible components, whose
intersection is a unique generic point. The direct image of the standard
Fr\'{e}chet sheaf $\mathcal{O}$ of stalks of holomorphic functions on
$\mathbb{C}$ along the continuous identity map $\mathbb{C\rightarrow}\left(
\mathbb{C}_{x},\mathfrak{q}\right)  $ is denoted by $\mathcal{O}%
^{\mathfrak{q}}$. In a similar way, we have the sheaf $\mathcal{O}%
^{\mathfrak{d}}$ on $\left(  \mathbb{C}_{y},\mathfrak{d}\right)  $. The sheaf
$\mathcal{O}^{\mathfrak{q}}$ has the subsheaf $\mathcal{I}_{\mathfrak{q}}$ of
holomorphic functions whose stalks vanish at the origin. Its unitization
$\mathcal{I}_{\mathfrak{q}}^{+}$ is reduced to $\mathcal{O}^{\mathfrak{q}}$ on
$\mathbb{C}_{x}$. In a similar way, $\mathcal{I}_{\mathfrak{d}}^{+}%
=\mathcal{O}^{\mathfrak{d}}$ on $\mathbb{C}_{y}$.

By \textit{a noncommutative contractive }$q$-\textit{plane }we mean the ringed
space $\left(  \mathbb{C}_{xy},\mathcal{O}_{q}\right)  $ to be the unital
projective tensor product of the ringed spaces $\left(  \mathbb{C}%
_{x},\mathcal{I}_{\mathfrak{q}}\right)  $ and $\left(  \mathbb{C}%
_{y},\mathcal{I}_{\mathfrak{d}}\right)  $. The underline space $\mathbb{C}%
_{xy}$ is equipped with the $\left(  \mathfrak{q},\mathfrak{d}\right)
$\textit{-}topology and its structure presheaf is defined as
\[
\mathcal{O}_{q}=\mathcal{I}_{\mathfrak{q}}^{+}\widehat{\otimes}\mathcal{I}%
_{\mathfrak{d}}^{+}=\left(  \mathcal{O}^{\mathfrak{q}}|\mathbb{C}_{x}\right)
\widehat{\otimes}\left(  \mathcal{O}^{\mathfrak{d}}|\mathbb{C}_{y}\right)  .
\]
It turns out that $\mathcal{O}_{q}$ is a presheaf of noncommutative
Fr\'{e}chet $\widehat{\otimes}$-algebras equipped with the formal
$q$-multiplication (see Subsection \ref{subsecCXY}) such that $\mathcal{O}%
_{q}\left(  \mathbb{C}_{xy}\right)  =\mathcal{O}_{q}\left(  \mathbb{C}%
^{2}\right)  $ up to a topological isomorphism of the Fr\'{e}chet
$\widehat{\otimes}$-algebras. The first central result (see Theorem
\ref{tdecomO}) of the paper asserts that there is a natural topological direct
sum decomposition
\[
\mathcal{O}_{q}\left(  U\right)  =\mathcal{O}^{\mathfrak{q}}\left(
U_{x}\right)  \oplus\operatorname{Rad}\mathcal{O}_{q}\left(  U\right)
\oplus\mathcal{I}_{\mathfrak{d}}\left(  U_{y}\right)
\]
for every $\left(  \mathfrak{q},\mathfrak{d}\right)  $-open subset
$U=U_{x}\cup U_{y}\subseteq\mathbb{C}_{xy}$. In particular, the algebra
$\mathcal{O}_{q}\left(  U\right)  $ is commutative modulo its Jacobson radical
$\operatorname{Rad}\mathcal{O}_{q}\left(  U\right)  $ and $\operatorname{Spec}%
\left(  \mathcal{O}_{q}\left(  U\right)  \right)  =U$. Thus the sheaf
$\mathcal{O}_{q}$ admits the following decomposition $\mathcal{O}%
_{q}=\mathcal{O}^{\mathfrak{q}}\oplus\mathcal{I}_{xy}\oplus\mathcal{I}%
_{\mathfrak{d}}$ with the two-sided ideal subsheaf $\mathcal{I}_{xy}$
associated to the presheaf $\operatorname{Rad}\mathcal{O}_{q}$.

The next target is to solve the noncommutative functional calculus problem
within the noncommutative space $\left(  \mathbb{C}_{xy},\mathcal{O}%
_{q}\right)  $ taking into account that $\mathcal{O}_{q}$ represents the
stalks of noncommutative holomorphic functions in $x$ and $y$. One needs to
solve the problem when a left Banach $\mathcal{O}_{q}\left(  \mathbb{C}%
_{xy}\right)  $-module structure can be lifted up to a left Banach
$\mathcal{O}_{q}\left(  U\right)  $-module one for an open subset
$U\subseteq\mathbb{C}_{xy}$. That is the challenging task to analyze all
details including the joint spectrum that stands by and plays the key role in
noncommutative complex analytic geometry.

A Banach algebra representation of the $q$-plane $\mathfrak{A}_{q}$ is given
by a complex Banach algebra $\mathcal{B}$ and a couple of its elements
$T,S\in\mathcal{B}$ such that $TS=q^{-1}ST$. The pair $\left(  T,S\right)  $
automatically defines a unique continuous algebra homomorphism $\mathcal{O}%
_{q}\left(  \mathbb{C}_{xy}\right)  \rightarrow\mathcal{B}$. The next central
of the paper on the functional calculus (see Theorem \ref{thFunCal}) asserts
that $\mathcal{O}_{q}\left(  \mathbb{C}_{xy}\right)  \rightarrow\mathcal{B}$
extends up to a continuous algebra homomorphism $\mathcal{O}_{q}\left(
U\right)  \rightarrow\mathcal{B}$ for a $\left(  \mathfrak{q},\mathfrak{d}%
\right)  $-open subset $U\subseteq\mathbb{C}_{xy}$ if and only if the
inclusions $\sigma\left(  T\right)  \subseteq U_{x}$ and $\sigma\left(
S\right)  \subseteq U_{y}$ hold for the spectra of $T$ and $S$ in the Banach
algebra $\mathcal{B}$. The union $\sigma\left(  T\right)  \cup\sigma\left(
S\right)  $ plays the role of the joint spectrum of the pair $\left(
T,S\right)  $. To be precise, assume that $\mathcal{B}$ is a unital Banach
algebra containing the pair $\left(  T,S\right)  $, which is commutative
modulo its Jacobson radical. That is the case if $\mathcal{B}$ is the closed
subalgebra of the algebra $\mathcal{B}\left(  \mathfrak{X}\right)  $ of all
bounded linear operators acting on a complex Banach space $\mathfrak{X}$
generated by $\left(  T,S\right)  $ such that $T$ or $S$ is a compact
operator. In this case, the joint Harte spectrum $\sigma_{\operatorname{H}%
}\left(  T,S\right)  $ of the pair $\left(  T,S\right)  $ in the Banach
algebra $\mathcal{B}$ is reduced to the union $\sigma\left(  T\right)
\cup\sigma\left(  S\right)  $, and the following spectral mapping theorem%
\[
\sigma_{\operatorname{H}}\left(  f\left(  T,S\right)  \right)  =f\left(
\sigma_{\operatorname{H}}\left(  T,S\right)  \right)
\]
holds for every tuple $f\left(  x,y\right)  $ of noncommutative holomorphic
functions from $\mathcal{O}_{q}\left(  U\right)  $ (see Corollary
\ref{corFCST}). The action of every $f_{i}\left(  x,y\right)  \in
\mathcal{O}_{q}\left(  U\right)  $ on $U$ is given by $\lambda\left(
f_{i}\left(  x,y\right)  \right)  $, $\lambda\in$ $\operatorname{Spec}\left(
\mathcal{O}_{q}\left(  U\right)  \right)  $. The related examples for
illustration are considered in Subsection \ref{subsecEx}.

\section{Preliminaries\label{sPre}}

All considered vector spaces are assumed to be complex. If $A$ is a complex
associative algebra then $A^{\operatorname{op}}$ denotes the same algebra $A$
but with the opposite multiplication $a\cdot^{\operatorname{op}}b=b\cdot a$,
$a,b\in A$. The unitization of an algebra $A$ is denoted by $A^{+}$, that is,
$A^{+}=A\oplus\mathbb{C}1$ with the standard multiplication.

\subsection{The index notations\label{subsecIN}}

The set of all positive integers is denoted by $\mathbb{N}$ whereas
$\mathbb{Z}_{+}$ denotes the set of all nonnegative integers. The set of all
$s$-tuples over $\mathbb{Z}_{+}$ is denoted by $\mathbb{Z}_{+}^{s}$. If
$I=\left(  i_{1},\ldots,i_{s}\right)  \in\mathbb{Z}_{+}^{s}$ with $i_{k}\geq1$
for all $k$, then we write $I\in\mathbb{N}^{s}$. If $I,J\in\mathbb{Z}_{+}^{s}$
then $\left\langle I,J\right\rangle $ denotes the integer $\sum_{k=1}^{s}%
i_{k}j_{k}$, and $\left\vert I\right\vert =\sum_{k=1}^{s}i_{k}$. The following
notations will be used below. For $I\in\mathbb{Z}_{+}^{s}$ and $1\leq t\leq
s-1$, we put
\[
I_{\left(  t\right)  }=\left(  i_{1},\ldots,i_{t-1}\right)  \in\mathbb{Z}%
_{+}^{t-1}\quad\text{and}\quad I^{\left(  t\right)  }=\left(  i_{t+1}%
,\ldots,i_{s}\right)  \in\mathbb{Z}_{+}^{s-t}%
\]
($I_{\left(  1\right)  }=0$). For every tuple $I\in\mathbb{Z}_{+}^{s}$ we also
set $I^{\ast}=\left(  \left\vert I^{\left(  1\right)  }\right\vert ,\left\vert
I^{\left(  2\right)  }\right\vert ,\ldots,\left\vert I^{\left(  s-1\right)
}\right\vert \right)  $ to be an $(s-1)$-tuple. Thus
\[
I^{\ast}=\left(  i_{2}+\cdots+i_{s},i_{3}+\cdots+i_{s},\ldots,i_{s-1}%
+i_{s},i_{s}\right)  \in\mathbb{Z}_{+}^{s-1}\text{.}%
\]
If $I=\left(  i_{1},\ldots,i_{s}\right)  $, $K=\left(  k_{1},\ldots
,k_{s}\right)  \in\mathbb{Z}_{+}^{s}$ and $a_{i_{t}k_{t}}\in\mathbb{C}$,
$1\leq t\leq s$, then for brevity, we write $a_{IK}$ instead of the product
$\prod\limits_{t=1}^{s}a_{i_{t}k_{t}}$.

\subsection{Arens-Michael envelope}

Let $A$ be a complete polynormed (or locally convex) algebra. If the topology
of $A$ is defined by means of a family of multiplicative seminorms, then $A$
is called an \textit{Arens-Michael algebra} \cite[1.2.4]{Hel}. Fix a
polynormed algebra $A$ with its separately continuous multiplication. The
\textit{Arens-Michael envelope} \cite[5.2.21]{Hel} of $A$ is called a pair
$\left(  \widetilde{A},\omega\right)  $ of an Arens-Michael algebra
$\widetilde{A}$, and a continuous algebra homomorphism $\omega:A\rightarrow
\widetilde{A}$ with the following \textquotedblright
universal-projective\textquotedblright\ property: for all Arens-Michael
algebra $\mathcal{B}$ and a continuous algebra homomorphism $\pi
:A\rightarrow\mathcal{B}$, there exists a unique continuous algebra
homomorphism $\widetilde{\pi}:\widetilde{A}\rightarrow\mathcal{B}$ such that
$\widetilde{\pi}\cdot\omega=\pi$. It turns out that an Arens-Michael algebra
is an inverse limit of some Banach algebras \cite[5.2.10]{Hel}. Therefore in
the latter universal projective property it can be assumed that all considered
algebras $\mathcal{B}$ are Banach algebras.

The set of all continuous characters of an Arens-Michael algebra $\mathcal{A}$
is denoted by $\operatorname{Spec}\left(  \mathcal{A}\right)  $. If
$\lambda\in\operatorname{Spec}\left(  \mathcal{A}\right)  $ then the algebra
homomorphism $\lambda:\mathcal{A}\rightarrow\mathbb{C}$ defines $\mathcal{A}%
$-module structure on $\mathbb{C}$ via pull back along $\lambda$. This module
called \textit{a trivial module} is denoted by $\mathbb{C}\left(
\lambda\right)  $.

The spectrum of an element $a$ in an associative algebra $A$ is denoted by
$\sigma\left(  a\right)  $. An element $a\in A$ is said to be quasinilpotent
if $\sigma\left(  a\right)  =\left\{  0\right\}  $. The set of all
quasinilpotent elements in $A$ is denoted by $Q\left(  A\right)  $. For the
Jacobson radical of $A$ we use the notation $\operatorname{Rad}A$.

\subsection{The $q$-topology and the disk topology of the complex
plane\label{subsecQT}}

We fix $q\in\mathbb{C}\backslash\left\{  0\right\}  $ with $\left\vert
q\right\vert <1$. A subset $S\subseteq\mathbb{C}$ is called a $q$%
\textit{-spiraling set }if it contains the origin and $\left\{  q^{n}%
x:n\in\mathbb{Z}_{+}\right\}  \subseteq S$ for every $x\in S$. Thus $S$ is a
$q$-spiraling set iff $S_{q}=S$, where $S_{q}=\left\{  0\right\}  \cup\left(
\cup_{n=1}^{\infty}q^{n}S\right)  $ is the $q$\textit{-hull of }$S$. If
$S=B\left(  a,r\right)  $ is an open disk centered at $a\in\mathbb{C}$ of
radius $r>0$, then $q^{n}S=B\left(  q^{n}a,\left\vert q\right\vert
^{n}r\right)  $ are family of disks in $\mathbb{C}$ obtained by rotating and
squeezing $S$ about the origin. For a real $q$, a $q$-spiraling set is a
star-shaped set centered at the origin. If $S=\left\{  x\right\}  $ is a
singleton, then $\left\{  x\right\}  _{q}$ is a spiraling sequence which tends
to zero including its limit point, that is, $\left\{  x\right\}  _{q}=\left\{
q^{n}x:n\in\mathbb{Z}_{+}\right\}  \cup\left\{  0\right\}  $ is a compact set.

An open subset $U\subseteq\mathbb{C}$ is said to be \textit{a }$q$%
\textit{-open set} if it is an open subset of $\mathbb{C}$ in the standard
topology, which is also a $q$-spiraling set. So is the whole plane
$\mathbb{C}$, and the empty set is assumed to be $q$-open set. The family of
all $q$-open subsets defines a new topology $\mathfrak{q}$ in $\mathbb{C}$,
which is weaker than the original standard topology of the complex plane.
Every open disk $B\left(  0,r\right)  $ centered at the origin is a $q$-open
set. Thus the neighborhood filter base of the origin is the same in both
$\mathfrak{q}$-topology and the standard topology.

Notice that $\left\{  0\right\}  $ is a generic point of the topological space
$\left(  \mathbb{C},\mathfrak{q}\right)  $ being dense in the whole plane. One
can easily prove that if $x\in\mathbb{C}\backslash\left\{  0\right\}  $ then
its closure in $\left(  \mathbb{C},\mathfrak{q}\right)  $ is given by
\[
\left\{  x\right\}  ^{-}=\left\{  q^{-k}x:k\in\mathbb{Z}_{+}\right\}  .
\]
Thus the topological space $\left(  \mathbb{C},\mathfrak{q}\right)  $
satisfies the axiom $T_{0}$, and it turns out to be an irreducible topological
space \cite[Ch. II, 4.1]{BurComA}. One can easily see that $\left(
\mathbb{C},\mathfrak{q}\right)  $ is not quasicompact, in particular, it is
not noetherian. Every $\mathfrak{q}$-open subset of $\left(  \mathbb{C}%
,\mathfrak{q}\right)  $ is $\mathfrak{q}$-connected (see \cite[Ch. II, 4.1,
Proposition 1]{BurComA}) automatically.

If $K\subseteq\mathbb{C}$ is a compact subset in the standard topology of
$\mathbb{C}$ then it turns out to be a quasicompact subset of $\left(
\mathbb{C},\mathfrak{q}\right)  $, but never a $\mathfrak{q}$-closed subset.
In particular, all disks (open or closed) centered at the origin are
quasicompact (nonclosed) subsets of $\left(  \mathbb{C},\mathfrak{q}\right)
$. They are all dense subsets of $\left(  \mathbb{C},\mathfrak{q}\right)  $.
Every closure $\left\{  x\right\}  ^{-}$ of a point $x\in\mathbb{C}$ is not quasicompact.

The family $\left\{  B\left(  0,r\right)  :r\in\mathbb{R}_{+}\right\}  $ of
all open disks in $\mathbb{C}$ centered at the origin defines a new topology
$\mathfrak{d}$ called \textit{the disk topology}. Since every $B\left(
0,r\right)  $ is $q$-open, the disk topology $\mathfrak{d}$ is weaker than
$\mathfrak{q}$, that is, $\mathfrak{d\preceq q}$. The closure $\left\{
x\right\}  ^{-}$ in $\left(  \mathbb{C},\mathfrak{d}\right)  $ of every
$x\in\mathbb{C}$ with $\left\vert x\right\vert =\rho\geq0$ is reduced to
$\mathbb{C}\backslash B\left(  0,\rho\right)  $. In particular, $\left\{
0\right\}  $ is a generic point in $\left(  \mathbb{C},\mathfrak{d}\right)  $ too.

\begin{lemma}
\label{lemQC}The disk topology $\mathfrak{d}$ and the $\mathfrak{q}$-topology
have the same neighborhood filter base at the origin. The quasicompact subsets
of $\left(  \mathbb{C},\mathfrak{d}\right)  $ are exactly the bounded subsets
of $\mathbb{C}$. A nonempty subset $K\subseteq\left(  \mathbb{C}%
,\mathfrak{q}\right)  $ is quasicompact iff so is its $q$-hull $K_{q}$. In
this case, $K$ is bounded automatically.
\end{lemma}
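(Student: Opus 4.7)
The plan is to verify the three assertions independently, each being a direct consequence of the explicit description of basic open sets in the two topologies.

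For the first assertion, the family $\{B(0,r):r>0\}$ is, by construction, a local base at $0$ for $\mathfrak{d}$. To see it is also a local base at $0$ for $\mathfrak{q}$, I would verify that each disk $B(0,r)$ is $\mathfrak{q}$-open: it is standard-open, contains the origin, and, since $|q|<1$, satisfies $|q^{n}x|\le |q|^{n}r<r$ whenever $x\in B(0,r)$, so $B(0,r)$ is $q$-spiraling. Conversely, any $\mathfrak{q}$-open set containing the origin is standard-open at $0$, hence contains some $B(0,r)$. This aligns with the observation already recorded in the paper comparing $\mathfrak{q}$ with the standard topology.

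For the second assertion I would start by describing the $\mathfrak{d}$-open sets explicitly: since the basis $\{B(0,r)\}_{r>0}$ is totally ordered by inclusion, every nonempty $\mathfrak{d}$-open set is either $\mathbb{C}$ or $B(0,r)$ for some $r>0$. The direction quasicompact $\Rightarrow$ bounded is then immediate: the $\mathfrak{d}$-open cover $\{B(0,n):n\in\mathbb{N}\}$ of $\mathbb{C}$ admits a finite subcover whose union is $B(0,N)\supseteq K$. For the converse, given a bounded $K$ and a $\mathfrak{d}$-open cover $\mathcal{U}$, I would select a single $U\in\mathcal{U}$ of sufficiently large radius (or $U=\mathbb{C}$) to contain $K$, producing a singleton subcover. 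The delicate point here, which I expect to require the most care, is justifying the existence of such a $U\in\mathcal{U}$ from the totally ordered chain structure of $\mathfrak{d}$-opens.

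For the third assertion, the crucial observation is that every $\mathfrak{q}$-open set is $q$-spiraling and contains the origin by definition. Consequently, if $\mathcal{U}$ is a $\mathfrak{q}$-open cover of a nonempty $K$ and $z\in K$ lies in some $U\in\mathcal{U}$, then $\{q^{n}z:n\geq 0\}\cup\{0\}\subseteq U$, so $\mathcal{U}$ automatically covers the $q$-hull $K_{q}=\{0\}\cup\bigcup_{n\geq 0}q^{n}K$. Conversely every $\mathfrak{q}$-open cover of $K_{q}$ restricts to one of $K$, so the two families of $\mathfrak{q}$-open covers coincide and the equivalence of quasicompactness for $K$ and $K_{q}$ is immediate. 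Finally, quasicompactness of $K$ forces boundedness via the $\mathfrak{q}$-open cover $\{B(0,n):n\in\mathbb{N}\}$, whose members are $\mathfrak{q}$-open by the first assertion.
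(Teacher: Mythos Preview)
Your approach matches the paper's in all essentials: the paper likewise argues that every $\mathfrak{q}$-open neighborhood of $0$ contains some $B(0,\varepsilon)$, uses the cover $\{B(0,n)\}$ for quasicompact $\Rightarrow$ bounded, and exploits the $q$-spiraling property of $\mathfrak{q}$-open sets for the $K\leftrightarrow K_{q}$ equivalence. The only structural difference is that the paper deduces boundedness of a $\mathfrak{q}$-quasicompact $K$ by first pushing it to $(\mathbb{C},\mathfrak{d})$ along the continuous identity map, whereas you argue directly with the $\mathfrak{q}$-open cover $\{B(0,n)\}$; both are fine.

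One warning on the direction bounded $\Rightarrow$ $\mathfrak{d}$-quasicompact, which the paper dismisses as ``obvious'' and which you rightly flag as the delicate point: your plan to extract a \emph{single} $U\in\mathcal{U}$ containing $K$ from the chain structure cannot succeed in general. Take $K=B(0,1)$ and $\mathcal{U}=\{B(0,1-1/n):n\ge 2\}$; this is a $\mathfrak{d}$-open cover of $K$ admitting no finite subcover, so $K$ is bounded but not $\mathfrak{d}$-quasicompact. The lemma's second assertion is therefore not literally correct as stated---the $\mathfrak{d}$-quasicompact subsets are precisely the bounded sets that attain their supremum modulus---but this does not affect anything downstream, since the paper only uses the implication quasicompact $\Rightarrow$ bounded together with the first and third assertions.
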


\begin{proof}
Since every $q$-open subset $U\subseteq\left(  \mathbb{C},\mathfrak{q}\right)
$ contains a small disk $B\left(  0,\varepsilon\right)  $, it follows that the
$\mathfrak{q}$-neighborhood filter base is weaker than the $\mathfrak{d}$-one,
that is, they are equal.

If $K\subseteq\left(  \mathbb{C},\mathfrak{q}\right)  \mathfrak{\ }$is a
quasicompact subset then it is quasicompact in $\left(  \mathbb{C}%
,\mathfrak{d}\right)  $, for the identity mapping $\left(  \mathbb{C}%
,\mathfrak{q}\right)  \rightarrow\left(  \mathbb{C},\mathfrak{d}\right)  $ is
continuous. But the family $\left\{  B\left(  0,n\right)  :n\in\mathbb{N}%
\right\}  $ of open disks in $\mathbb{C}$ is a $\mathfrak{d}$-open covering of
$\left(  \mathbb{C},\mathfrak{d}\right)  $. Since $K\subseteq\cup_{n}B\left(
0,n\right)  $, it follows that $K\subseteq B\left(  0,n\right)  $ for a large
$n$, that is, $K$ is bounded. Conversely, every bounded subset $K\subseteq
\left(  \mathbb{C},\mathfrak{d}\right)  $ is obviously quasicompact.

Now let $K\subseteq\mathbb{C}$ be a nonempty subset. If $K\subseteq\cup
_{i}U_{i}$ is an open covering in $\left(  \mathbb{C},\mathfrak{q}\right)  $,
and $z\in K$, then $z\in U_{i}$ for some $i$, and $\left\{  z\right\}
_{q}\subseteq U_{i}$, for $U_{i}$ is $q$-spiraling set. It follows that
$K_{q}\subseteq\cup_{i}U_{i}$ too. In particular, $K\subseteq\left(
\mathbb{C},\mathfrak{q}\right)  $ is quasicompact iff so is its $q$-hull
$K_{q}$.
\end{proof}

\begin{remark}
Not every bounded subset $K\subseteq\mathbb{C}$ is quasicompact in $\left(
\mathbb{C},\mathfrak{q}\right)  $ automatically. For example, suppose
$K=S^{1}-\left\{  1\right\}  $ is the unit circle with the removed point
$\left\{  1\right\}  $. There exists an infinite open covering of $K$ with
small disks $B\left(  z_{n},\varepsilon_{n}\right)  $, $n\in\mathbb{N}$, which
has no finite subcovering of $K$, for it is not a compact set in the standard
topology of $\mathbb{C}$. Put $U_{n}=B\left(  z_{n},\varepsilon_{n}\right)
_{q}$ to be $q$-open subsets covering $K$. One can easily seen that $K$ can
not covered by a finite of $U_{n}$ for a small $q$.
\end{remark}

We can consider the same topologies in the case of $\left\vert q\right\vert
>1$ too. One needs just to swap the poles $z\mapsto1/z$ in the Riemann sphere
and do the same.

Let $\mathcal{O}$ be the standard Fr\'{e}chet sheaf of stalks of the
holomorphic functions on $\mathbb{C}$ and let $\operatorname{id}%
:\mathbb{C\rightarrow}\left(  \mathbb{C},\mathfrak{q}\right)  $ be the
identity mapping, which is a continuous mapping between the standard complex
plane and $\left(  \mathbb{C},\mathfrak{q}\right)  $. The direct image
$\operatorname{id}_{\ast}\mathcal{O}$ of the sheaf $\mathcal{O}$ along the
identity mapping (see \cite[2.1]{Harts}) is denoted by $\mathcal{O}%
^{\mathfrak{q}}$. It turns out to be a Fr\'{e}chet $\widehat{\otimes}$-algebra
sheaf on the topological space $\left(  \mathbb{C},\mathfrak{q}\right)  $. For
every $q$-open set $U$ and its quasicompact subset $K\subseteq U$ we define
the related seminorm $\left\Vert f\right\Vert _{K}=\sup\left\vert f\left(
K\right)  \right\vert $, $f\in\mathcal{O}\left(  U\right)  $ on the algebra
$\mathcal{O}^{\mathfrak{q}}\left(  U\right)  $. By Lemma \ref{lemQC}, the
family $\left\{  \left\Vert \cdot\right\Vert _{K}\right\}  $ of seminorms over
all $q$-compact subsets $K\subseteq U$ (that is, $K=K_{q}$) defines the same
original Fr\'{e}chet topology of $\mathcal{O}\left(  U\right)  $, that is,
$\mathcal{O}^{\mathfrak{q}}\left(  U\right)  =\mathcal{O}\left(  U\right)  $
as the Fr\'{e}chet $\widehat{\otimes}$-algebras. But $\mathcal{O}%
^{\mathfrak{q}}$ and $\mathcal{O}$ are totally different sheaves having quite
different stalks (see below Lemma \ref{lemQC2}).

In a similar way, we can define the Fr\'{e}chet sheaf $\mathcal{O}%
^{\mathfrak{d}}$ as the direct image of $\mathcal{O}^{\mathfrak{q}}$ along the
identity (continuous) mapping $\left(  \mathbb{C},\mathfrak{q}\right)
\rightarrow\left(  \mathbb{C},\mathfrak{d}\right)  $. If $U=B\left(
0,r\right)  $ then $\mathcal{O}^{\mathfrak{d}}\left(  U\right)  =\mathcal{O}%
\left(  U\right)  $ as the Fr\'{e}chet $\widehat{\otimes}$-algebras.

\subsection{The Banach algebras $\mathcal{A}\left(  K\right)  $ and
$\mathcal{A}\left(  \rho\right)  $\label{subsecAKR}}

In this subsection we present some material of the mathematical folklore just
to prevent a possible discomfort for a reader.

Let $K\subseteq\mathbb{C}$ be a compact subset (in the standard topology). The
algebra of all stalks of the holomorphic functions on $K$ is denoted by
$\mathcal{O}\left(  K\right)  $. If $K$ is infinite, then one can identify
$\mathcal{O}\left(  K\right)  $ through the restriction map with the unital
subalgebra of the $C^{\ast}$-algebra $C\left(  K\right)  $ of all complex
continuous functions on $K$. The norm closure of $\mathcal{O}\left(  K\right)
$ in $C\left(  K\right)  $ is denoted by $\mathcal{A}\left(  K\right)  $.
Notice that $\left\Vert \cdot\right\Vert _{K}$ defines a norm on
$\mathcal{O}\left(  K\right)  $ by the uniqueness property of the holomorphic
functions, and $\mathcal{A}\left(  K\right)  $ is a Banach algebra, which is
the closure of the rational functions $\mathcal{R}\left(  K\right)  $ whose
poles are located outside of $K$. In particular, it contains every continuous
function on $K$ having a holomorphic extension on a neighborhood of $K$. Note
that $\mathcal{A}\left(  K\right)  \subseteq C\left(  K\right)  $ is an
inverse closed subalgebra and $\sigma\left(  z\right)  =K$ in $\mathcal{A}%
\left(  K\right)  $.

If $U\subseteq\mathbb{C}$ is an open subset and $\mathcal{R}\left(  U\right)
\subseteq\mathcal{O}\left(  U\right)  $ is the subalgebra of all rational
functions then $\mathcal{R}\left(  U\right)  =\underleftarrow{\lim}\left\{
\mathcal{R}\left(  K\right)  :K\subseteq U\right\}  $, which means that
$\mathcal{O}\left(  U\right)  $ is continuously embedded into the algebra
$\mathcal{A}\left(  U\right)  =\underleftarrow{\lim}\left\{  \mathcal{A}%
\left(  K\right)  :K\subseteq U\right\}  $. The Arens-Michael algebra
$\mathcal{A}\left(  U\right)  $ is in turn embedded into $C\left(  U\right)  $
by extending the canonical inclusion $\mathcal{O}\left(  U\right)  \subseteq
C\left(  U\right)  $. In the case of a compact disk $K=\mathbb{D}_{\rho}$ of
radius $\rho$ centered at the origin, along with $\mathcal{A}\left(  K\right)
$ we have also the Banach algebra
\[
\mathcal{A}\left(  \rho\right)  =\left\{  f=\sum_{n\in\mathbb{Z}_{+}}%
a_{n}z^{n}:\left\Vert f\right\Vert _{\rho}<\infty\right\}
\]
of all absolutely convergent series on $\mathbb{D}_{\rho}$ equipped with the
norm $\left\Vert f\right\Vert _{\rho}=\sum_{n\in\mathbb{Z}_{+}}\left\vert
a_{n}\right\vert \rho^{n}$. The identity mapping over polynomials is extended
up to a contractive homomorphism $\mathcal{A}\left(  \rho\right)
\rightarrow\mathcal{A}\left(  \mathbb{D}_{\rho}\right)  $ of the Banach algebras.

\begin{lemma}
\label{lemAK0}Let $U\subseteq\mathbb{C}$ be an open subset. Then
$\mathcal{O}\left(  U\right)  =\mathcal{A}\left(  U\right)  $ up to a
topological isomorphism. If $U=B\left(  0,r\right)  $ then the equality
\[
\mathcal{O}\left(  B\left(  0,r\right)  \right)  =\underleftarrow{\lim
}\left\{  \mathcal{A}\left(  \rho\right)  :\rho<r\right\}
\]
holds up to a topological isomorphism. In particular,
\[
\mathcal{O}\left(  U\right)  \widehat{\otimes}\mathcal{O}\left(  B\left(
0,r\right)  \right)  =\underleftarrow{\lim}\left\{  \mathcal{A}\left(
K\right)  \widehat{\otimes}\mathcal{A}\left(  \rho\right)  :K\subset
U,\rho<r\right\}  .
\]

\end{lemma}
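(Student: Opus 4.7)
I would treat the three assertions in order, each by a standard approximation or Cauchy-estimate argument.

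For the identification $\mathcal{O}\left(U\right)=\mathcal{A}\left(U\right)$, I would consider the natural continuous algebra homomorphism
\[
\mathcal{O}\left(U\right)\longrightarrow\underleftarrow{\lim}\left\{\mathcal{A}\left(K\right):K\subseteq U\right\}
\]
induced by restriction. Its injectivity and the equivalence of topologies are automatic, because the Fr\'{e}chet topology of $\mathcal{O}\left(U\right)$ is defined by the very seminorms $\left\Vert\cdot\right\Vert_{K}$ used to assemble the inverse limit. For surjectivity, I take a coherent family $\left(f_{K}\right)$ and glue it to a well-defined function $f$ on $U$; holomorphy is checked locally by covering $U$ with small closed disks $\overline{B\left(z_{0},s\right)}\subseteq U$. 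On each such disk, $f$ restricts to an element of $\mathcal{A}\left(\overline{B\left(z_{0},s\right)}\right)$, and the classical identification of $\mathcal{A}\left(\overline{B\left(z_{0},s\right)}\right)$ with the disk algebra (continuous on the closed disk, holomorphic in the interior), available either through Mergelyan's theorem or through polynomial approximation on the polynomially convex disk, yields that $f$ is holomorphic on $B\left(z_{0},s\right)$.

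For $\mathcal{O}\left(B\left(0,r\right)\right)=\underleftarrow{\lim}\left\{\mathcal{A}\left(\rho\right):\rho<r\right\}$, I would compare the two families of seminorms by means of Cauchy's inequalities. For $f=\sum_{n}a_{n}z^{n}\in\mathcal{O}\left(B\left(0,r\right)\right)$ and $0<\rho<\rho'<r$, the estimate $\left\vert a_{n}\right\vert\left(\rho'\right)^{n}\leq\left\Vert f\right\Vert_{\mathbb{D}_{\rho'}}$ gives
\[
\left\Vert f\right\Vert_{\rho}=\sum_{n}\left\vert a_{n}\right\vert\rho^{n}\leq\left\Vert f\right\Vert_{\mathbb{D}_{\rho'}}\sum_{n}\left(\rho/\rho'\right)^{n}=\frac{\rho'}{\rho'-\rho}\left\Vert f\right\Vert_{\mathbb{D}_{\rho'}},
\]
while $\left\Vert f\right\Vert_{\mathbb{D}_{\rho}}\leq\left\Vert f\right\Vert_{\rho}$ is trivial. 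Thus every element of $\mathcal{O}\left(B\left(0,r\right)\right)$ sits in $\mathcal{A}\left(\rho\right)$ for every $\rho<r$ and the two seminorm systems are mutually dominated, which yields the topological isomorphism. Conversely, a coherent family $\left(f_{\rho}\right)$ shares a single power series whose partial sums converge in every $\mathcal{A}\left(\rho\right)$, so it produces an element of $\mathcal{O}\left(B\left(0,r\right)\right)$.

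For the tensor product formula, I would invoke the commutation of the completed projective tensor product $\widehat{\otimes}$ of Fr\'{e}chet spaces with reduced countable projective limits. Since $\mathcal{O}\left(U\right)$ and $\mathcal{O}\left(B\left(0,r\right)\right)$ are themselves countable projective limits of the Banach algebras $\mathcal{A}\left(K\right)$ and $\mathcal{A}\left(\rho\right)$ by the first two parts, the announced equality
\[
\mathcal{O}\left(U\right)\widehat{\otimes}\mathcal{O}\left(B\left(0,r\right)\right)=\underleftarrow{\lim}\left\{\mathcal{A}\left(K\right)\widehat{\otimes}\mathcal{A}\left(\rho\right):K\subset U,\ \rho<r\right\}
\]
follows by substitution. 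The main obstacle is the gluing step in the first part: certifying that an inverse-limit element of the $\mathcal{A}\left(K\right)$ produces a genuinely holomorphic, not merely continuous, function on $U$. This rests entirely on the identification of $\mathcal{A}\left(\overline{B\left(z_{0},s\right)}\right)$ with the disk algebra for small closed disks; once this is secured, the remainder reduces to routine bookkeeping of seminorms.
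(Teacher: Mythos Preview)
Your argument is correct and covers all three claims, but the second and third parts take a genuinely different route from the paper. For $\mathcal{O}\left(B\left(0,r\right)\right)=\underleftarrow{\lim}\,\mathcal{A}\left(\rho\right)$ you compare the two seminorm families directly via Cauchy's inequalities, which is elementary and sharp; the paper instead sandwiches $\mathcal{A}\left(\rho\right)$ between $\mathcal{A}\left(\mathbb{D}_{\rho}\right)$ and $\mathcal{A}\left(\varepsilon\right)$ by invoking the holomorphic functional calculus $\mathcal{A}\left(\mathbb{D}_{\rho}\right)\to\mathcal{A}\left(\varepsilon\right)$ for $\varepsilon<\rho$ (since $\sigma\left(z\right)=\mathbb{D}_{\varepsilon}$ in $\mathcal{A}\left(\varepsilon\right)$). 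For the tensor-product identity you appeal to the general commutation of $\widehat{\otimes}$ with reduced countable projective limits, whereas the paper constructs the inverse explicitly: using the absolute basis $\left\{z^{n}\right\}$ in each $\mathcal{A}\left(\rho\right)$, it extracts from a coherent family $\left(f_{K,\rho}\right)$ a sequence $f_{n}=\left(f_{K,n}\right)_{K}\in\mathcal{O}\left(U\right)$, checks that $g=\sum_{n}f_{n}\otimes z^{n}$ converges absolutely, and finishes with the Open Mapping Theorem. Your route is quicker; the paper's is self-contained and, more importantly, produces the concrete series representation $\sum_{n}f_{n}\otimes z^{n}$ that is reused verbatim when describing $\mathcal{O}_{q}\left(U\right)$ later. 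For the first assertion the two arguments are close: both glue a coherent family and certify holomorphy locally, you via the disk-algebra identification of $\mathcal{A}\left(\overline{B\left(z_{0},s\right)}\right)$, the paper via rational approximation on a compact neighbourhood followed by uniform convergence on interior compacta.
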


\begin{proof}
First pick a bounded open subset $V\subseteq U$ with its compact closure
$K=V^{-}$ in the standard topology of $\mathbb{C}$, and $f\in\mathcal{A}%
\left(  U\right)  $, which turns out to be a continuous function on $U$. Since
$f|K\in\mathcal{A}\left(  K\right)  $, it follows that $f|K=\lim_{n}g_{n}$ for
a certain sequence $\left\{  g_{n}\right\}  \subseteq\mathcal{R}\left(
K\right)  $. But $\left\{  g_{n}|V\right\}  \subseteq\mathcal{R}\left(
V\right)  \subseteq\mathcal{O}\left(  V\right)  $ and $\left\Vert
f|\omega-g_{n}|\omega\right\Vert _{\omega}=\left\Vert f-g_{n}\right\Vert
_{\omega}\leq\left\Vert f|K-g_{n}\right\Vert _{K}\rightarrow0$ as
$n\rightarrow\infty$ for every compact subset $\omega\subset V$. It follows
that $f|V=\lim_{n}g_{n}|V$ in the Fr\'{e}chet space $\mathcal{O}\left(
V\right)  $, which means that $f|V\in\mathcal{O}\left(  V\right)  $. Hence
$f\in\mathcal{O}\left(  U\right)  $. Since the Hausdorff completion of the
seminormed space $\left(  \mathcal{R}\left(  U\right)  ,\left\Vert
\cdot\right\Vert _{K}\right)  $ is continuously included into $\mathcal{A}%
\left(  K\right)  $ for every infinite compact subset $K\subseteq U$, it
follows that $\mathcal{O}\left(  U\right)  =\mathcal{A}\left(  U\right)  $ up
to a topological isomorphism.

Now assume that $K=\mathbb{D}_{\rho}$ and $\varepsilon<\rho$. Since
$\left\Vert f\right\Vert _{K}\leq\left\Vert f\right\Vert _{\rho}$ for all
$f\in\mathcal{O}\left(  K\right)  $, it follows that $\mathcal{A}\left(
\rho\right)  \subseteq\mathcal{A}\left(  \mathbb{D}_{\rho}\right)  $ is a
contraction of Banach algebras. Since $\sigma\left(  z\right)  =\mathbb{D}%
_{\varepsilon}$ in the Banach algebra $\mathcal{A}\left(  \varepsilon\right)
$, there is a (holomorphic) functional calculus
\[
\mathcal{A}\left(  \mathbb{D}_{\rho}\right)  \rightarrow\mathcal{A}\left(
\varepsilon\right)  ,\quad f\mapsto f\left(  z\right)  =\dfrac{1}{2\pi i}%
\oint\limits_{\partial\mathbb{D}_{\rho}}f\left(  \lambda\right)  \left(
\lambda-z\right)  ^{-1}d\lambda,
\]
which extends the restriction map $\mathcal{O}\left(  \mathbb{D}_{\rho
}\right)  \rightarrow\mathcal{O}\left(  \mathbb{D}_{\varepsilon}\right)  $ due
to the Cauchy Integral Theorem. In particular, there are canonical continuous
maps $\mathcal{O}\left(  \mathbb{D}_{\rho}\right)  \hookrightarrow
\mathcal{A}\left(  \rho\right)  \hookrightarrow\mathcal{A}\left(
\mathbb{D}_{\rho}\right)  \rightarrow\mathcal{A}\left(  \varepsilon\right)
\hookrightarrow\mathcal{A}\left(  \mathbb{D}_{\varepsilon}\right)  $. It
follows that%
\[
\mathcal{O}\left(  B\left(  0,r\right)  \right)  =\underleftarrow{\lim
}\left\{  \mathcal{A}\left(  \mathbb{D}_{\rho}\right)  :\rho<r\right\}
=\underleftarrow{\lim}\left\{  \mathcal{A}\left(  \rho\right)  :\rho
<r\right\}
\]
up to a topological isomorphism of the Fr\'{e}chet $\widehat{\otimes}$-algebras.

Finally, the family of continuous linear maps $\mathcal{O}\left(  U\right)
\widehat{\otimes}\mathcal{O}\left(  B\left(  0,r\right)  \right)
\rightarrow\mathcal{A}\left(  K\right)  \widehat{\otimes}\mathcal{A}\left(
\rho\right)  $ generates a canonical continuous linear map $\pi:\mathcal{O}%
\left(  U\right)  \widehat{\otimes}\mathcal{O}\left(  B\left(  0,r\right)
\right)  \rightarrow\mathcal{F}$ of the Fr\'{e}chet spaces, where
$\mathcal{F=}\underleftarrow{\lim}\left\{  \mathcal{A}\left(  K\right)
\widehat{\otimes}\mathcal{A}\left(  \rho\right)  :K\subset U,\rho<r\right\}
$. But $\left\{  z^{n}:n\in\mathbb{Z}_{+}\right\}  $ is an absolute basis in
all algebras $\mathcal{O}\left(  B\left(  0,r\right)  \right)  $ and
$\mathcal{A}\left(  \rho\right)  $, $\rho<r$. If $f=\left(  f_{K,\rho}\right)
\in\mathcal{F}$ then every $f_{K,\rho}=\sum_{n}f_{K,n}\otimes z^{n}$ has a
unique absolutely convergent series expansion in $\mathcal{A}\left(  K\right)
\widehat{\otimes}\mathcal{A}\left(  \rho\right)  $. It follows that
$f_{n}=\left(  f_{K,n}\right)  _{K}\in\underleftarrow{\lim}\left\{
\mathcal{A}\left(  K\right)  \right\}  =\mathcal{O}\left(  U\right)  $ for
every $n$, and $g=\sum f_{n}\otimes z^{n}\in\mathcal{O}\left(  U\right)
\widehat{\otimes}\mathcal{O}\left(  B\left(  0,r\right)  \right)  $ converges
absolutely. Indeed,
\[
\left(  \left\Vert \cdot\right\Vert _{K}\otimes\left\Vert \cdot\right\Vert
_{\rho}\right)  \left(  g\right)  \leq\sum_{n}\left\Vert f_{n}\right\Vert
_{K}\rho^{n}=\sum_{n}\left\Vert f_{K,n}\right\Vert _{K}\rho^{n}<\infty
\]
for all $K$ and $\rho$, where $\left\Vert \cdot\right\Vert _{K}\otimes
\left\Vert \cdot\right\Vert _{\rho}$ is the projective tensor product of the
related seminorms. Since $\pi\left(  g\right)  =f$, we obtain that $\pi$ is a
continuous bijection of the Fr\'{e}chet spaces. By the Open Mapping Theorem,
we conclude that $\pi$ is a topological isomorphism.
\end{proof}

\section{Banach quantum planes\label{sBQP}}

Fix a complex number $q\in\mathbb{C}\backslash\left\{  0,1\right\}  $. A
unital associative algebra $A_{q}$ generated by two elements $x$ and $y$ with
the relation $xy=q^{-1}yx$ is called \textit{a quantum plane} or
$q$-\textit{plane} in our context. Thus a $q$-plane is a quotient of the
algebra $\mathfrak{A}_{q}$ (see Section \ref{secInt}) modulo its two-sided
ideal. Notice that $A_{q}^{\operatorname{op}}=A_{q^{-1}}$.

\subsection{Quantum planes}

If $A_{q}$ is a $q$-plane then we use the following notations $I_{x}$,
$I_{xy}$ and $I_{y}$ for the unital\ subalgebra in $A_{q}$ generated by $x$,
two sided ideal in $A_{q}$ generated by $xy$, and the subalgebra (without
unit) in $A_{q}$ generated by $y$, respectively. Thus $I_{x}=\left\{
\sum_{i\in\mathbb{Z}_{+}}a_{i}x^{i}:a_{i}\in\mathbb{C}\right\}  $,
$I_{xy}=\left\langle xy\right\rangle $ and $I_{y}=\left\{  \sum_{k\in
\mathbb{N}}a_{k}y^{k}:a_{k}\in\mathbb{C}\right\}  $. Let us notify that
$A_{q}$ is the linear span of all non-ordered monomials in $x$ and $y$, which
can be converted into the ordered ones.

\begin{lemma}
\label{lyx}If $A_{q}$ is a $q$-plane then $A_{q}=\operatorname{span}\left\{
x^{i}y^{k}:i,k\in\mathbb{Z}_{+}\right\}  $ is the linear span of all ordered
monomials. Moreover,
\[
I_{xy}=\left\{  \sum_{i,k\in\mathbb{N}}a_{ik}x^{i}y^{k}:a_{ik}\in
\mathbb{C}\right\}  \quad\text{and \quad}A_{q}=I_{x}+I_{xy}+I_{y}.
\]

\end{lemma}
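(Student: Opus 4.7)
The plan is to use the single commutation relation $yx=qxy$ (which is just a rearrangement of the defining relation $xy=q^{-1}yx$) as a rewriting rule that pushes every $y$ to the right of every $x$, and then read off each of the three claims from the resulting normal form.

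For the first claim, I would start from the fact that $A_q$ is spanned by all (unordered) words in $x$ and $y$. A straightforward induction on the length of a word, using $yx=qxy$ to swap a single adjacent pair $yx$ into $qxy$, shows that any such word equals a scalar multiple of an ordered monomial $x^{i}y^{k}$. More explicitly, the identities $y^{j}x=q^{j}xy^{j}$ and $yx^{k}=q^{k}x^{k}y$ (both by induction on $j$ and $k$) together give $y^{j}x^{k}=q^{jk}x^{k}y^{j}$, which lets one reduce an arbitrary monomial $x^{i_{1}}y^{j_{1}}x^{i_{2}}y^{j_{2}}\cdots$ to $q^{N}x^{i_{1}+i_{2}+\cdots}y^{j_{1}+j_{2}+\cdots}$ for an appropriate integer $N$. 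Hence $A_q=\operatorname{span}\{x^{i}y^{k}:i,k\in\mathbb{Z}_{+}\}$.

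For the description of $I_{xy}$, observe that $I_{xy}$ is spanned by elements of the form $a\cdot(xy)\cdot b$ with $a,b\in A_q$, and by the first part it suffices to take $a=x^{i}y^{j}$ and $b=x^{k}y^{l}$ for $i,j,k,l\in\mathbb{Z}_{+}$. Applying the identities just derived,
\[
x^{i}y^{j}\,(xy)\,x^{k}y^{l}=x^{i}(y^{j}x)(yx^{k})y^{l}=q^{j+k}\,x^{i+1}\,y^{j}x^{k}\,y^{l+1}=q^{j+k+jk}\,x^{i+k+1}y^{j+l+1},
\]
so every generator of $I_{xy}$ is a scalar multiple of an ordered monomial $x^{m}y^{n}$ with $m,n\geq 1$. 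Conversely each such $x^{m}y^{n}=x^{m-1}(xy)y^{n-1}$ belongs to $I_{xy}$, which gives the stated equality.

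The decomposition $A_{q}=I_{x}+I_{xy}+I_{y}$ then follows at once by splitting the basis $\{x^{i}y^{k}\}$ into four disjoint cases: the monomial $1$ sits in $I_{x}$ (which is unital), the monomials $x^{i}$ with $i\geq 1$ lie in $I_{x}$, the monomials $y^{k}$ with $k\geq 1$ lie in $I_{y}$, and the mixed monomials $x^{i}y^{k}$ with $i,k\geq 1$ lie in $I_{xy}$ by the formula above. The only step that demands any care is the bookkeeping of the scalar $q^{j+k+jk}$ in the computation of $x^{i}y^{j}(xy)x^{k}y^{l}$; everything else is a direct application of the rewriting rule. Note that at this stage no claim of directness of the sum is being made, so one does not need any linear independence of ordered monomials beyond what the spanning argument already gives.
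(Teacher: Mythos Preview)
Your proof is correct and follows essentially the same approach as the paper: both derive the key identity $y^{k}x^{i}=q^{ik}x^{i}y^{k}$ from the relation $yx=qxy$, use it to reduce arbitrary monomials to ordered ones, and then read off the description of $I_{xy}$ and the decomposition $A_q=I_x+I_{xy}+I_y$ directly. The only difference is cosmetic---you compute the explicit scalar $q^{j+k+jk}$ in the $I_{xy}$ argument, whereas the paper simply appeals to the commutation formula without tracking the exponent.
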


\begin{proof}
Since $yx=qxy$, it follows that
\begin{equation}
y^{k}x^{i}=q^{ik}x^{i}y^{k} \label{yx}%
\end{equation}
for all $i,k\geq0$. In particular, each nonordered monomial in $A_{q}$ taken
by $x$ and $y$ can be converted into an ordered one. Hence $A_{q}%
=\operatorname{span}\left\{  x^{i}y^{k}:i,k\in\mathbb{Z}_{+}\right\}  $.
Further,%
\[
\sum_{i,k\in\mathbb{N}}a_{ik}x^{i}y^{k}=\sum_{i,k\in\mathbb{N}}a_{ik}%
x^{i-1}\left(  xy\right)  y^{k-1}\in I_{xy}%
\]
for all $a_{ik}\in\mathbb{C}$. Conversely, take $f\left(  xy\right)  g\in
I_{xy}$ for some polynomials $f,g\in A_{q}$. Using (\ref{yx}), we deduce that
$f\left(  xy\right)  g=\sum_{i,k\in\mathbb{N}}a_{ik}x^{i}y^{k}$ for some
$a_{ik}\in\mathbb{C}$. Thus $I_{xy}=\left\{  \sum_{i,k\in\mathbb{N}}%
a_{ik}x^{i}y^{k}:a_{ik}\in\mathbb{C}\right\}  $. Finally, for every
$f=\sum_{i,k}a_{ik}x^{i}y^{k}\in A_{q}$ we have
\[
f=\sum_{i\in\mathbb{Z}_{+}}a_{i0}x^{i}+\sum_{i,k\in\mathbb{N}}a_{ik}x^{i}%
y^{k}+\sum_{k\in\mathbb{N}}a_{0k}y^{k},
\]
which justifies the equality $A_{q}=I_{x}+I_{xy}+I_{y}$.
\end{proof}

\begin{lemma}
\label{lT}If $f=\sum_{i,k}a_{ik}x^{i}y^{k}\in A_{q}$ and $s\in\mathbb{N}$,
then
\[
f^{s}=\sum_{I,K\in\mathbb{Z}_{+}^{s}}a_{IK}q^{\left\langle I^{\ast},K_{\left(
s\right)  }\right\rangle }x^{\left\vert I\right\vert }y^{\left\vert
K\right\vert },
\]
where $K_{\left(  s\right)  }=\left(  k_{1},\ldots,k_{s-1}\right)
\in\mathbb{Z}_{+}^{s-1}$, $I^{\ast}=\left(  \left\vert I^{\left(  1\right)
}\right\vert ,\left\vert I^{\left(  2\right)  }\right\vert ,\ldots,\left\vert
I^{\left(  s-1\right)  }\right\vert \right)  \in\mathbb{Z}_{+}^{s-1}$,
$I^{\left(  t\right)  }=\left(  i_{t+1},\ldots,i_{s}\right)  \in\mathbb{Z}%
_{+}^{s-t}$, $1\leq t\leq s-1$ (see Subsection \ref{subsecIN}).
\end{lemma}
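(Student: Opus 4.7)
The plan is to reduce the lemma to repeated application of the basic commutation identity \eqref{yx} and then to match the resulting power of $q$ with $\langle I^{\ast}, K_{(s)}\rangle$ by an elementary index-counting argument. First, distributivity in $A_q$ yields
$$f^{s} = \sum_{I, K \in \mathbb{Z}_+^{s}} a_{IK}\, x^{i_1} y^{k_1}\, x^{i_2} y^{k_2} \cdots x^{i_s} y^{k_s},$$
where $a_{IK} = \prod_{t=1}^{s} a_{i_{t} k_{t}}$ in the shorthand of Subsection \ref{subsecIN}. Thus the whole content of the lemma lies in rewriting each product $x^{i_1} y^{k_1} \cdots x^{i_s} y^{k_s}$ in the ordered form $q^{E(I,K)} x^{|I|} y^{|K|}$.

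Next, I would count the exponent $E(I, K)$ directly. For each pair $j < \ell$ in $\{1, \ldots, s\}$, the block $y^{k_j}$ has to cross the block $x^{i_\ell}$ exactly once as we push all $y$'s to the right, and each such crossing contributes a factor $q^{i_{\ell} k_{j}}$ by \eqref{yx}. These commutations can be carried out independently of one another, so
$$E(I, K) = \sum_{1 \le j < \ell \le s} i_{\ell} k_{j} = \sum_{j=1}^{s-1} k_{j} \sum_{\ell = j+1}^{s} i_{\ell} = \sum_{j=1}^{s-1} k_{j} \, |I^{(j)}| = \langle I^{\ast}, K_{(s)} \rangle,$$
where the final identification uses $I^{\ast} = (|I^{(1)}|, \ldots, |I^{(s-1)}|)$ and $K_{(s)} = (k_{1}, \ldots, k_{s-1})$ from Subsection \ref{subsecIN}. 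Summing over $I, K$ then gives the claimed formula.

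To make the ordering step fully rigorous I would run an induction on $s$. The base $s=1$ is trivial. For the inductive step, write $f^{s} = f^{s-1} \cdot f$, apply the hypothesis with indices $I' = (i_{1}, \ldots, i_{s-1})$ and $K' = (k_{1}, \ldots, k_{s-1})$, and then use \eqref{yx} in the form $y^{|K'|} x^{i_{s}} = q^{i_{s} |K'|} x^{i_{s}} y^{|K'|}$ to absorb the fresh factor $x^{i_{s}} y^{k_{s}}$. The identity $\langle (I')^{\ast}, K'_{(s-1)} \rangle + i_{s} |K'| = \langle I^{\ast}, K_{(s)} \rangle$, which drives the induction, follows at once by splitting each $|I^{(t)}|$ for $t \le s-2$ as $|(I')^{(t)}| + i_{s}$ and collecting the $i_{s}$-terms.

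The main, and essentially the only nontrivial, obstacle is the combinatorial bookkeeping that identifies the geometric double sum $\sum_{j < \ell} i_{\ell} k_{j}$ with the inner product $\langle I^{\ast}, K_{(s)} \rangle$; once the index notation of Subsection \ref{subsecIN} is unpacked, this reduces to a single reindexing and no algebraic difficulty remains.
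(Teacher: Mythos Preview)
Your proposal is correct and follows essentially the same inductive route as the paper: base case, write $f^{s}=f^{s-1}\cdot f$, apply \eqref{yx} to move $y^{|K'|}$ past $x^{i_s}$, and verify the index identity $\langle (I')^{\ast},K'_{(s-1)}\rangle+i_s|K'|=\langle I^{\ast},K_{(s)}\rangle$. Your additional direct count $E(I,K)=\sum_{j<\ell}i_{\ell}k_{j}$ is a pleasant closed-form reading of the same exponent that the paper arrives at only via the induction, but it is not a materially different argument.
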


\begin{proof}
For $s=2$ we have
\begin{align*}
f^{2}  &  =\sum_{i_{1},i_{2},k_{1},k_{2}}a_{i_{1}k_{1}}a_{i_{2}k_{2}}x^{i_{1}%
}y^{k_{1}}x^{i_{2}}y^{k_{2}}=\sum_{i_{1},i_{2},k_{1},k_{2}}a_{i_{1}k_{1}%
}a_{i_{2}k_{2}}q^{i_{2}k_{1}}x^{i_{1}+i_{2}}y^{k_{1}+k_{2}}\\
&  =\sum_{I,K\in\mathbb{Z}_{+}^{2}}a_{IK}q^{\left\langle I^{\ast},K_{\left(
2\right)  }\right\rangle }x^{\left\vert I\right\vert }y^{\left\vert
K\right\vert }%
\end{align*}
thanks to (\ref{yx}). Using again (\ref{yx}), by induction on $s$, we deduce
that
\begin{align*}
f^{s}  &  =\sum_{I,K\in\mathbb{Z}_{+}^{s-1}}a_{IK}q^{\left\langle I^{\ast
},K_{\left(  s-1\right)  }\right\rangle }x^{\left\vert I\right\vert
}y^{\left\vert K\right\vert }f=\sum_{I,K\in\mathbb{Z}_{+}^{s}}a_{IK}%
q^{\left\langle I_{\left(  s\right)  }^{\ast},K_{\left(  s-1\right)
}\right\rangle }x^{\left\vert I_{\left(  s\right)  }\right\vert }y^{\left\vert
K_{\left(  s\right)  }\right\vert }x^{i_{s}}y^{k_{s}}\\
&  =\sum_{I,K\in\mathbb{Z}_{+}^{s}}a_{IK}q^{\left\langle I_{\left(  s\right)
}^{\ast},K_{\left(  s-1\right)  }\right\rangle }q^{i_{s}\left\vert K_{\left(
s\right)  }\right\vert }x^{\left\vert I_{\left(  s\right)  }\right\vert
+i_{s}}y^{\left\vert K_{\left(  s\right)  }\right\vert +k_{s}}\\
&  =\sum_{I,K\in\mathbb{Z}_{+}^{s}}a_{IK}q^{\left\langle I_{\left(  s\right)
}^{\ast},K_{\left(  s-1\right)  }\right\rangle +i_{s}\left\vert K_{\left(
s\right)  }\right\vert }x^{\left\vert I\right\vert }y^{\left\vert K\right\vert
}.
\end{align*}
Using the index notations from Subsection \ref{subsecIN}, we deduce that
\begin{align*}
\left\langle I_{\left(  s\right)  }^{\ast},K_{\left(  s-1\right)
}\right\rangle +i_{s}\left\vert K_{\left(  s\right)  }\right\vert  &
=\left\vert I_{\left(  s\right)  }^{\left(  1\right)  }\right\vert
k_{1}+\left\vert I_{\left(  s\right)  }^{\left(  2\right)  }\right\vert
k_{2}+\cdots+\left\vert I_{\left(  s\right)  }^{\left(  s-2\right)
}\right\vert k_{s-2}+i_{s}\left(  k_{1}+\cdots+k_{s-1}\right) \\
&  =\left(  \left\vert I_{\left(  s\right)  }^{\left(  1\right)  }\right\vert
+i_{s}\right)  k_{1}+\left(  \left\vert I_{\left(  s\right)  }^{\left(
2\right)  }\right\vert +i_{s}\right)  k_{2}+\cdots+\left(  \left\vert
I_{\left(  s\right)  }^{\left(  s-2\right)  }\right\vert +i_{s}\right)
k_{s-2}+i_{s}k_{s-1}\\
&  =\left\vert I^{\left(  1\right)  }\right\vert k_{1}+\left\vert I^{\left(
2\right)  }\right\vert k_{2}+\cdots+\left\vert I^{\left(  s-2\right)
}\right\vert k_{s-2}+i_{s}k_{s-1}\\
&  =\left\vert I^{\left(  1\right)  }\right\vert k_{1}+\left\vert I^{\left(
2\right)  }\right\vert k_{2}+\cdots+\left\vert I^{\left(  s-2\right)
}\right\vert k_{s-2}+\left\vert I^{\left(  s-1\right)  }\right\vert k_{s-1}\\
&  =\left\langle I^{\ast},K_{\left(  s\right)  }\right\rangle
\end{align*}
for all $I,K\in\mathbb{Z}_{+}^{s}$. Consequently, the formula for the powers
of $f$ holds.
\end{proof}

\subsection{The contractive Banach $q$-planes}

A Banach algebra norm completion of a quantum plane is called \textit{a Banach
quantum plane} or \textit{Banach }$q$\textit{-plane} whenever $q$ is fixed.
For the norm closures of $I_{x}$, $I_{xy}$ and $I_{y}$ in a Banach $q$-plane
$\mathcal{A}_{q}$ we use the notations $\mathcal{I}_{x}$, $\mathcal{I}_{xy}$
and $\mathcal{I}_{y}$, respectively. Thus $\mathcal{I}_{xy}$ is a closed two
sided ideal in $\mathcal{A}_{q}$ whereas $\mathcal{I}_{x}$ and $\mathcal{I}%
_{y}$ are its closed subalgebras. If $\left\vert q\right\vert <1$ then we say
that $\mathcal{A}_{q}$ is a \textit{contractive Banach }$q$-\textit{plane}.

\begin{lemma}
\label{lAq}Let $\mathcal{A}_{q}$ be a contractive Banach $q$-plane generated
by $x$ and $y$. If $f\in I_{xy}$ then $f$ is quasinilpotent in $\mathcal{A}%
_{q}$, that is, $I_{xy}\subseteq Q\left(  \mathcal{A}_{q}\right)  $.
\end{lemma}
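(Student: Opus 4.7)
The plan is to estimate $\|f^{s}\|$ directly from the explicit expansion in Lemma~\ref{lT} and exploit the fact that, because $f\in I_{xy}$ forces each summand to carry a power $q^{\langle I^{\ast},K_{(s)}\rangle}$ with exponent growing quadratically in $s$, the contractivity $|q|<1$ dominates all other polynomial/exponential contributions.

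First I would use Lemma~\ref{lyx} to write $f=\sum_{i,k\in\mathbb{N}}a_{ik}x^{i}y^{k}$ as a \emph{finite} sum indexed over $i,k\geq 1$, and then apply Lemma~\ref{lT} to obtain
\[
f^{s}=\sum_{I,K\in\mathbb{N}^{s}}a_{IK}\,q^{\langle I^{\ast},K_{(s)}\rangle}\,x^{|I|}y^{|K|},
\]
where the summation is restricted to $\mathbb{N}^{s}$ because any zero entry in $I$ or $K$ kills $a_{IK}$. The crucial step is the lower bound
\[
\langle I^{\ast},K_{(s)}\rangle=\sum_{t=1}^{s-1}|I^{(t)}|\,k_{t}\ \geq\ \sum_{t=1}^{s-1}(s-t)\ =\ \frac{s(s-1)}{2},
\]
which holds for every $I,K\in\mathbb{N}^{s}$ since each $|I^{(t)}|=i_{t+1}+\cdots+i_{s}\geq s-t$ and each $k_{t}\geq 1$. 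Consequently $|q|^{\langle I^{\ast},K_{(s)}\rangle}\leq|q|^{s(s-1)/2}$ for every contributing multi-index pair.

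Once this quadratic decay is in hand, the rest is a routine tallying argument. I would choose an integer $N$ with $a_{ik}=0$ whenever $i>N$ or $k>N$, and set $C=\max_{i,k}|a_{ik}|$ and $M=\max(\|x\|,\|y\|,1)$; then at most $N^{2s}$ pairs $(I,K)$ contribute, each with coefficient at most $C^{s}$ and monomial norm at most $M^{2Ns}$. Collecting the estimates,
\[
\|f^{s}\|\ \leq\ (N^{2}CM^{2N})^{s}\,|q|^{s(s-1)/2},
\]
so $\|f^{s}\|^{1/s}\to 0$ as $s\to\infty$, $\sigma(f)=\{0\}$, and thus $f\in Q(\mathcal{A}_{q})$.

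The only genuinely delicate point is recognizing that restricting to $I,K\in\mathbb{N}^{s}$ (which is exactly the consequence of $f\in I_{xy}$) forces $\langle I^{\ast},K_{(s)}\rangle$ to grow like $\binom{s}{2}$; everything afterwards is controlled because a quadratic exponent in $|q|<1$ beats any exponential factor $R^{s}$.
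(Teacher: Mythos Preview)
Your proof is correct and follows essentially the same route as the paper: expand $f^{s}$ via Lemma~\ref{lT}, use $I,K\in\mathbb{N}^{s}$ to obtain $\langle I^{\ast},K_{(s)}\rangle\geq\binom{s}{2}$, and factor out $|q|^{s(s-1)/2}$ against an exponential remainder. The only difference is bookkeeping in the final step---the paper packages the remainder as $\rho_{f}^{s}$ with $\rho_{f}=\sum_{i,k}|a_{ik}|\rho^{i+k}$ (yielding the slightly sharper inequality~(\ref{Ts}), which is reused in Proposition~\ref{pAfree}), whereas your cruder counting bound $(N^{2}CM^{2N})^{s}$ is equally sufficient for the lemma itself.
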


\begin{proof}
Using Lemma \ref{lyx}, we conclude that $f=\sum_{i,k\in\mathbb{N}}a_{ik}%
x^{i}y^{k}$ is a polynomial in $A_{q}$. Put $\rho_{f}=\sum_{i,k\in\mathbb{N}%
}\left\vert a_{ik}\right\vert \rho^{i+k}$, where $\rho\geq\max\left\{
\left\Vert x\right\Vert ,\left\Vert y\right\Vert \right\}  $. By Lemma
\ref{lT}, we infer that
\[
f^{s}=\sum_{I,K\in\mathbb{N}^{s}}a_{IK}q^{\left\langle I^{\ast},K_{\left(
s\right)  }\right\rangle }x^{\left\vert I\right\vert }y^{\left\vert
K\right\vert }.
\]
It follows that $\left\Vert f^{s}\right\Vert \leq\sum_{I,K\in\mathbb{N}^{s}%
}\left\vert a_{IK}\right\vert \left\vert q\right\vert ^{\left\langle I^{\ast
},K_{\left(  s\right)  }\right\rangle }\rho^{\left\vert I\right\vert
+\left\vert K\right\vert }$. Since $\left\vert q\right\vert <1$, it follows
that
\[
\left\vert q\right\vert ^{\left\langle I^{\ast},K_{\left(  s\right)
}\right\rangle }=\left\vert q\right\vert ^{\left\vert I^{\left(  1\right)
}\right\vert k_{1}+\cdots+\left\vert I^{\left(  s-1\right)  }\right\vert
k_{s-1}}\leq\left\vert q\right\vert ^{\left(  s-1\right)  k_{1}+\left(
s-2\right)  k_{2}+\cdots+k_{s-1}}\leq\left\vert q\right\vert ^{\left(
s-1\right)  +\left(  s-2\right)  +\cdots+1}=\left\vert q\right\vert ^{s\left(
s-1\right)  /2}.
\]
It follows that
\[
\left\Vert f^{s}\right\Vert \leq\left\vert q\right\vert ^{s\left(  s-1\right)
/2}\sum_{I,K\in\mathbb{N}^{s}}\left\vert a_{IK}\right\vert \rho^{\left\vert
I\right\vert +\left\vert K\right\vert }=\left\vert q\right\vert ^{s\left(
s-1\right)  /2}\rho_{f}^{s}.
\]
Consequently,
\begin{equation}
\left\Vert f^{s}\right\Vert ^{1/s}\leq\left\vert q\right\vert ^{\left(
s-1\right)  /2}\rho_{f}. \label{Ts}%
\end{equation}
Whence $\inf\left\{  \left\Vert f^{s}\right\Vert ^{1/s}:s\in\mathbb{N}%
\right\}  =0$, that is, $f$ is a quasinilpotent element of $\mathcal{A}_{q}$.
\end{proof}

The fact $xy\in Q\left(  \mathcal{A}_{q}\right)  $ can also be deduced from
the simple algebraic equalities $\sigma\left(  xy\right)  \cup\left\{
0\right\}  =\sigma\left(  yx\right)  \cup\left\{  0\right\}  =q\sigma\left(
xy\right)  \cup\left\{  0\right\}  $ (see \cite[2.1.8]{Hel}) which means that
$\sigma\left(  xy\right)  =\left\{  0\right\}  $ whenever $\left\vert
q\right\vert \neq1$. Note that (\ref{Ts}) shows the decay rate of the powers
in $I_{xy}$. We don't know whether $\mathcal{I}_{xy}\subseteq Q\left(
\mathcal{A}_{q}\right)  $ (or $\mathcal{I}_{xy}\subseteq\operatorname{Rad}%
\mathcal{A}_{q}$) holds too. The following assertion demonstrates that the
problem is reduced to the property to be commutative modulo its Jacobson
radical of a contractive Banach quantum plane.

\begin{proposition}
\label{pequi}Let $\mathcal{A}_{q}$ be a contractive Banach $q$-plane. The
following assertions are equivalent:

$\left(  i\right)  $ $\mathcal{I}_{xy}\subseteq\operatorname{Rad}%
\mathcal{A}_{q}$;

$\left(  ii\right)  $ $\operatorname{Rad}\mathcal{A}_{q}=Q\left(
\mathcal{A}_{q}\right)  ;$

$\left(  iii\right)  $ $\mathcal{A}_{q}/\operatorname{Rad}\mathcal{A}_{q}$ is
a commutative Banach algebra.
\end{proposition}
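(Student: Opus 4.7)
The plan is to prove the chain of implications $(i)\Rightarrow(iii)\Rightarrow(ii)\Rightarrow(i)$, exploiting two general facts about Banach algebras: (a) the Jacobson radical is always contained in the set of quasinilpotents, and (b) in a commutative Banach algebra the two sets coincide (via Gelfand theory, since $\operatorname{Spec}$ sees all spectra).

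First I would handle $(i)\Rightarrow(iii)$, which is the most algebraic step. The defining relation gives $yx=qxy$, hence the commutator is
\[
[x,y]=xy-yx=(1-q)xy\in I_{xy}\subseteq\mathcal{I}_{xy}.
\]
Assuming $(i)$, this commutator lies in $\operatorname{Rad}\mathcal{A}_q$, so the images $\bar{x},\bar{y}$ commute in the quotient $\mathcal{A}_q/\operatorname{Rad}\mathcal{A}_q$. Since $x$ and $y$ generate $\mathcal{A}_q$ as a Banach algebra, $\bar{x}$ and $\bar{y}$ generate the quotient, and a Banach algebra topologically generated by a commuting pair is commutative.

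Next, for $(iii)\Rightarrow(ii)$, let $\pi\colon\mathcal{A}_q\to\mathcal{B}:=\mathcal{A}_q/\operatorname{Rad}\mathcal{A}_q$ be the quotient map. For commutative $\mathcal{B}$ we have $\operatorname{Rad}\mathcal{B}=Q(\mathcal{B})$ by Gelfand's theory, and $\operatorname{Rad}\mathcal{B}=0$ by construction, so $Q(\mathcal{B})=0$. Given $a\in Q(\mathcal{A}_q)$, the standard inequality $\sigma_{\mathcal{B}}(\pi(a))\subseteq\sigma_{\mathcal{A}_q}(a)=\{0\}$ shows $\pi(a)\in Q(\mathcal{B})=0$, whence $a\in\operatorname{Rad}\mathcal{A}_q$. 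Combined with the trivial inclusion $\operatorname{Rad}\mathcal{A}_q\subseteq Q(\mathcal{A}_q)$, this yields $(ii)$.

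Finally, for $(ii)\Rightarrow(i)$, I invoke Lemma \ref{lAq}, which gives $I_{xy}\subseteq Q(\mathcal{A}_q)$. Under the hypothesis this coincides with $\operatorname{Rad}\mathcal{A}_q$, and since the Jacobson radical of a Banach algebra is a closed two-sided ideal, I may pass to closures to conclude $\mathcal{I}_{xy}\subseteq\operatorname{Rad}\mathcal{A}_q$.

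The argument is essentially formal once the preceding lemmas are in hand; no step looks delicate. The only point that requires a moment's thought is the density argument in $(i)\Rightarrow(iii)$, i.e.\ confirming that the topological subalgebra generated by two commuting elements in a Banach algebra is commutative — this follows because polynomials in $\bar{x},\bar{y}$ commute and their closure inherits commutativity by continuity of multiplication.
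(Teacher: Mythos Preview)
Your proof is correct and follows essentially the same approach as the paper: the key inputs are Lemma~\ref{lAq}, the observation that the generator commutator lies in $\mathcal{I}_{xy}$, and the standard facts about spectra modulo the Jacobson radical. The only cosmetic difference is that you organize it as a clean cycle $(i)\Rightarrow(iii)\Rightarrow(ii)\Rightarrow(i)$, whereas the paper proves four separate implications (including a direct $(i)\Rightarrow(ii)$ via the quotient $\mathcal{A}_q/\mathcal{I}_{xy}$); your route is slightly more economical but the underlying ideas are identical.
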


\begin{proof}
$\left(  i\right)  \Rightarrow\left(  ii\right)  $ Assume that $\mathcal{I}%
_{xy}\subseteq\operatorname{Rad}\mathcal{A}_{q}$. Thus $\mathcal{I}_{xy}$ is a
closed two sided ideal of quasinilpotent elements. Therefore, if
$\pi:\mathcal{A}_{q}\rightarrow\mathcal{A}_{q}/\mathcal{I}_{xy}$, $\pi\left(
a\right)  =a^{\sim}$, is the quotient mapping, then $\sigma\left(  a^{\sim
}\right)  =\sigma\left(  a\right)  $ for all $a\in\mathcal{A}_{q}$. The Banach
algebra $\mathcal{A}_{q}/\mathcal{I}_{xy}$ is generated by the elements
$x^{\sim}$ and $y^{\sim}$. Since $x^{\sim}y^{\sim}=0^{\sim}$, it follows that
$\mathcal{A}_{q}/\mathcal{I}_{xy}$ is commutative. Therefore
$\operatorname{Rad}\mathcal{A}_{q}/\mathcal{I}_{xy}=Q\left(  \mathcal{A}%
_{q}/\mathcal{I}_{xy}\right)  $ \cite[2.1.34]{Hel}. Take $b\in Q\left(
\mathcal{A}_{q}\right)  $ and $a\in\mathcal{A}_{q}$. Then $\sigma\left(
b^{\sim}\right)  =\sigma\left(  b\right)  =\left\{  0\right\}  $, that is,
$b^{\sim}\in Q\left(  \mathcal{A}_{q}/\mathcal{I}_{xy}\right)  $. Therefore
$b^{\sim}\in\operatorname{Rad}\mathcal{A}_{q}/\mathcal{I}_{xy}$ and $a^{\sim
}b^{\sim}\in Q\left(  \mathcal{A}_{q}/\mathcal{I}_{xy}\right)  $. It follows
that $\sigma\left(  ab\right)  =\sigma\left(  a^{\sim}b^{\sim}\right)
=\left\{  0\right\}  $, that is, $ab\in Q\left(  \mathcal{A}_{q}\right)  $.
Hence $b\in\operatorname{Rad}\mathcal{A}_{q}$.

$\left(  ii\right)  \Rightarrow\left(  i\right)  $ Now assume that
$\operatorname{Rad}\mathcal{A}_{q}=Q\left(  \mathcal{A}_{q}\right)  $. By
Lemma \ref{lAq}, the inclusion $I_{xy}\subseteq Q\left(  \mathcal{A}%
_{q}\right)  $ holds, that is, $I_{xy}\subseteq\operatorname{Rad}%
\mathcal{A}_{q}$. It follows that $\mathcal{I}_{xy}\subseteq\operatorname{Rad}%
\mathcal{A}_{q}$.

$\left(  iii\right)  \Rightarrow\left(  ii\right)  $ Take $b\in Q\left(
\mathcal{A}_{q}\right)  $. Since $\mathcal{A}_{q}/\operatorname{Rad}%
\mathcal{A}_{q}$ is commutative, it follows that $a^{\sim}b^{\sim}\in Q\left(
\mathcal{A}_{q}/\operatorname{Rad}\mathcal{A}_{q}\right)  $ for each
$a\in\mathcal{A}_{q}$. Hence $ab\in Q\left(  \mathcal{A}_{q}\right)  $ for
each $a\in\mathcal{A}_{q}$. Thus $b\in\operatorname{Rad}\mathcal{A}_{q}$.

$\left(  i\right)  \Rightarrow\left(  iii\right)  $ Take $a,b\in
\mathcal{A}_{q}$. Being $\mathcal{A}_{q}/\mathcal{I}_{xy}$ a commutative
Banach algebra, we conclude that $ab-ba\in\mathcal{I}_{xy}\subseteq
\operatorname{Rad}\mathcal{A}_{q}$. Whence $\mathcal{A}_{q}/\operatorname{Rad}%
\mathcal{A}_{q}$ is commutative.
\end{proof}

\subsection{The $x$-inverse closed hull of a quantum plane}

As above let $A_{q}$ be a contractive $q$-plane in a Banach algebra
$\mathcal{B}$. Assume that the spectrum $\sigma\left(  x\right)  $ of $x$ in
$\mathcal{B}$ is a $q$-compact subset of $\mathbb{C}$ in the sense of that
$\sigma\left(  x\right)  _{q}=\sigma\left(  x\right)  $, where $\sigma\left(
x\right)  _{q}=\left\{  0\right\}  \cup\left(  \cup_{i=1}^{\infty}q^{i}%
\sigma\left(  x\right)  \right)  $ is the $q$-hull of the compact set
$\sigma\left(  x\right)  $ (see Subsection \ref{subsecQT}). There is a natural
holomorphic functional $\mathcal{O}\left(  \sigma\left(  x\right)  \right)
\rightarrow\mathcal{B}$, $f\left(  z\right)  \mapsto f\left(  x\right)  $ for
the element $x$, where $\mathcal{O}\left(  \sigma\left(  x\right)  \right)  $
is the algebra of all stalks of holomorphic functions on the spectrum
$\sigma\left(  x\right)  $. As above in Subsection \ref{subsecAKR}, the
subalgebra of stalks of all rational functions on the spectrum $\sigma\left(
x\right)  $ is denoted by $\mathcal{R}\left(  \sigma\left(  x\right)  \right)
$. The image $\mathcal{R}\left(  x\right)  $ of $\mathcal{R}\left(
\sigma\left(  x\right)  \right)  $ through the functional calculus is the
inverse closed subalgebra in $\mathcal{B}$ generated by $x$, whereas the image
$\mathcal{O}\left(  x\right)  $ of the holomorphic functional calculus is
contained in the closure of the inverse closed hull of $x$ in $\mathcal{B}$.
If $\lambda\notin\sigma\left(  x\right)  $ then $\left(  q^{i}z-\lambda
\right)  ^{-1}\in\mathcal{R}\left(  \sigma\left(  x\right)  \right)  $,
therefore $\left(  q^{i}x-\lambda\right)  ^{-1}\in\mathcal{B}$ for all
$i\geq0$. Note that $q^{-i}\lambda\notin\sigma\left(  x\right)  $, for
$\sigma\left(  x\right)  _{q}=\sigma\left(  x\right)  $.

The formula (\ref{yx}) can be generalized in the following way%
\begin{equation}
y^{k}x^{i}\left(  x-\lambda\right)  ^{-m}=q^{ik}x^{i}\left(  q^{k}%
x-\lambda\right)  ^{-m}y^{k} \label{yxm}%
\end{equation}
Indeed, since $y\left(  x-\mu\right)  =\left(  qx-\mu\right)  y$, $\mu
\notin\sigma\left(  x\right)  $, and $\left(  qx-\mu\right)  ^{-1}%
\in\mathcal{B}$, it follows that $y\left(  x-\mu\right)  ^{-1}=\left(
qx-\mu\right)  ^{-1}y$. By iterating, we obtain that $y\left(  x-\mu\right)
^{-m}=\left(  qx-\mu\right)  ^{-m}y$ for all $m\geq1$. Using (\ref{yx}), we
proceed by induction on the pairs $\left(  k,m\right)  $. Namely, taking into
account that $\mu=q^{1-k}\lambda\notin\sigma\left(  x\right)  $, we deduce
that
\begin{align*}
y^{k}x^{i}\left(  x-\lambda\right)  ^{-m}  &  =q^{ik}x^{i}y^{k}\left(
x-\lambda\right)  ^{-m}=q^{ik}x^{i}y\left(  q^{k-1}x-\lambda\right)
^{-m}y^{k-1}\\
&  =q^{ik}x^{i}q^{\left(  1-k\right)  m}y\left(  x-q^{1-k}\lambda\right)
^{-m}y^{k-1}=q^{ik}x^{i}q^{\left(  1-k\right)  m}\left(  qx-q^{1-k}%
\lambda\right)  ^{-m}y^{k}\\
&  =q^{ik}x^{i}\left(  q^{k}x-\lambda\right)  ^{-m}y^{k},
\end{align*}
that is, the formula (\ref{yxm}) holds.

Now consider the $x$-inverse closed subalgebra $A_{q,x}\subseteq\mathcal{B}$
generated by the $q$-plane $A_{q}$. That is the subalgebra in $\mathcal{B}$
generated by $\mathcal{R}\left(  x\right)  $ and $y$. Using Lemma \ref{lyx}
and (\ref{yxm}), and the fact that every rational function can be decomposed
into a sum of simple ratios, we obtain that%
\[
A_{q,x}=\operatorname{span}\left\{  x^{i}\left(  x-\lambda\right)  ^{-m}%
y^{k}:i,m,k\in\mathbb{Z}_{+},\lambda\notin\sigma\left(  x\right)  \right\}
=I_{x}+I_{xy}+I_{y},
\]
where $I_{x}=\mathcal{R}\left(  x\right)  $ is the inverse closed subalgebra
in $\mathcal{B}$ generated by $x$, $I_{y}$ is the (non-unital) subalgebra in
$\mathcal{B}$ generated by $y$, and
\begin{align*}
I_{xy}  &  =\left\{  \sum_{k\in\mathbb{N}}r_{k}\left(  x\right)  y^{k}%
:r_{k}\left(  z\right)  \in\mathcal{R}\left(  \sigma\left(  x\right)  \right)
,r_{k}\left(  0\right)  =0\right\} \\
&  =\left\{  \sum_{m\in\mathbb{Z}_{+}}\sum_{i,k\in\mathbb{N}}a_{imk}%
x^{i}\left(  x-\lambda\right)  ^{-m}y^{k}:a_{imk}\in\mathbb{C}\right\}
\end{align*}
is the two-sided ideal in $A_{q,x}$ generated by $xy$. Notice that if
$r\left(  0\right)  \neq0$ for some $r\left(  z\right)  \in\mathcal{R}\left(
\sigma\left(  x\right)  \right)  $, then $r\left(  x\right)  y^{k}=\left(
r\left(  x\right)  -r\left(  0\right)  \right)  y^{k}+r\left(  0\right)
y^{k}\in I_{xy}+I_{y}$ for all $k\geq1$.

The norm closure of $A_{q,x}$ in $\mathcal{B}$ denoted by $\mathcal{A}_{q,x}$
is a unital Banach algebra.

\begin{lemma}
\label{lemxIC1}If $f\in I_{xy}$ then $f$ is quasinilpotent in $\mathcal{A}%
_{q,x}$, that is, $I_{xy}\subseteq Q\left(  \mathcal{A}_{q,x}\right)  $.
\end{lemma}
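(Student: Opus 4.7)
The plan is to mimic the quasinilpotency argument of Lemma~\ref{lAq}, with the polynomial commutation relation \eqref{yx} replaced by its rational version \eqref{yxm} applied to arbitrary $r\in\mathcal{R}(\sigma(x))$. The preceding description of $A_{q,x}$ shows that every $f\in I_{xy}$ is a finite sum
\[
f=\sum_{k=1}^{N} r_{k}(x)\,y^{k},\qquad r_{k}\in\mathcal{R}(\sigma(x)),\ \ r_{k}(0)=0.
\]
First I would extend \eqref{yxm} to the identity $y^{m}r(x)=r(q^{m}x)\,y^{m}$ valid for every $r\in\mathcal{R}(\sigma(x))$. This follows by decomposing $r$ into a polynomial plus partial fractions and applying \eqref{yx} and \eqref{yxm} termwise; the $q$-stability $\sigma(x)_{q}=\sigma(x)$ ensures that $q^{m}x$ has spectrum inside $\sigma(x)$, which avoids the poles of $r$ and makes $r(q^{m}x)$ well defined in $\mathcal{B}$.

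Next, I would expand $f^{s}$ and push each factor $y^{k_{j}}$ past the following $r_{k_{j+1}}(x)$ by means of the above identity, arriving at
\[
f^{s}=\sum_{K\in\{1,\ldots,N\}^{s}} r_{k_{1}}(x)\,r_{k_{2}}(q^{k_{1}}x)\cdots r_{k_{s}}\bigl(q^{k_{1}+\cdots+k_{s-1}}x\bigr)\,y^{|K|}.
\]
Since $r_{k}(0)=0$, each $r_{k}$ factors as $r_{k}(z)=z\,\tilde{r}_{k}(z)$ with $\tilde{r}_{k}\in\mathcal{R}(\sigma(x))$ (any apparent pole at $0\in\sigma(x)$ is cancelled by the zero of $r_{k}$). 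Hence $r_{k}(q^{m}x)=q^{m}x\,\tilde{r}_{k}(q^{m}x)$, giving $\|r_{k}(q^{m}x)\|\le |q|^{m}\,\|x\|\,\|\tilde{r}_{k}(q^{m}x)\|$. The rational functional calculus $\mathcal{R}(\sigma(x))\to\mathcal{B}$, combined with the inclusions $q^{m}\sigma(x)\subseteq\sigma(x)$, yields a uniform bound $\|\tilde{r}_{k}(q^{m}x)\|\le M$ independent of $m$ and $k$.

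Collecting these estimates and writing $m_{j}=k_{1}+\cdots+k_{j-1}$, one has
\[
m_{1}+\cdots+m_{s}=(s-1)k_{1}+(s-2)k_{2}+\cdots+k_{s-1}\ge \frac{s(s-1)}{2},
\]
since every $k_{j}\ge 1$, exactly as in Lemma~\ref{lAq}. This produces an estimate of the shape
\[
\|f^{s}\|\le \bigl(NM\|x\|\,\|y\|^{N}\bigr)^{s}\,|q|^{s(s-1)/2},
\]
so that $\|f^{s}\|^{1/s}\le C\,|q|^{(s-1)/2}\to 0$, proving $f\in Q(\mathcal{A}_{q,x})$. The main obstacle is securing the uniform functional-calculus bound $\|\tilde{r}_{k}(q^{m}x)\|\le M$; this is precisely where the hypothesis $\sigma(x)_{q}=\sigma(x)$ is used, as it keeps all shifted arguments $q^{m}x$ inside the fixed compact set on which $\tilde{r}_{k}$ is holomorphic. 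Everything else is a direct transcription of the Lemma~\ref{lAq} computation.
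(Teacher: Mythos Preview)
Your strategy matches the paper's: expand $f^{s}$ by repeatedly pushing $y$-powers past rational functions of $x$, extract the factor $|q|^{s(s-1)/2}$ from the accumulated exponents, and bound the remaining product uniformly in $s$. In fact your write-up is closer to the paper's proof of the more general Proposition~\ref{propTQA} (which factors $f_{k}(z)=z\,g_{k}(z)$ and uses a bounded functional calculus) than to its proof of Lemma~\ref{lemxIC1} itself (which keeps the explicit partial-fraction form and estimates products of resolvents $(q^{|K_{(t)}|}x-\lambda_{m_{t}})^{-m_{t}}$ directly).

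The one place where your argument is too quick is the uniform bound $\|\tilde r_{k}(q^{m}x)\|\le M$. Knowing that $q^{m}\sigma(x)\subseteq\sigma(x)$ only tells you that $\tilde r_{k}(q^{m}x)$ is \emph{defined}; it does not by itself control the norm, because the rational calculus $\mathcal{R}(\sigma(x))\to\mathcal{B}$ carries no continuous norm. The paper closes this gap by splitting into two regimes: for the finitely many small exponents one takes a constant $C_{f,x}$ by hand, while for large exponents $\|q^{l}x\|\le\varepsilon/2$ and a Neumann-series estimate gives $\|(q^{l}x-\lambda)^{-1}\|\le 2/\varepsilon$. Equivalently, you could fix a contour $\Gamma$ around $\sigma(x)$ away from the poles of the $\tilde r_{k}$ and note that $\sup_{m,\lambda\in\Gamma}\|(\lambda-q^{m}x)^{-1}\|<\infty$ (finitely many $m$ plus $q^{m}x\to0$), which then yields the uniform $M$ via the Cauchy integral. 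Once you make this explicit, your proof is complete and essentially the paper's argument.
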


\begin{proof}
First note that if $f=\sum_{i,m,k}a_{imk}x^{i}\left(  x-\lambda_{m}\right)
^{-m}y^{k}\in A_{q,x}$ with $\left\{  \lambda_{m}\right\}  \cap\sigma\left(
x\right)  =\varnothing$ and $\left\{  a_{imk}\right\}  \subseteq\mathbb{C}$,
then using (\ref{yxm}), we deduce that%
\begin{align*}
f^{2}  &  =\sum_{i_{1},i_{2},k_{1},k_{2}}a_{i_{1}m_{1}k_{1}}a_{i_{2}m_{2}%
k_{2}}x^{i_{1}}\left(  x-\lambda_{m_{1}}\right)  ^{-m_{1}}y^{k_{1}}x^{i_{2}%
}\left(  x-\lambda_{m_{2}}\right)  ^{-m_{2}}y^{k_{2}}\\
&  =\sum_{i_{1},i_{2},k_{1},k_{2}}a_{i_{1}m_{1}k_{1}}a_{i_{2}m_{2}k_{2}%
}q^{i_{2}k_{1}}x^{i_{1}+i_{2}}\left(  x-\lambda_{m_{1}}\right)  ^{-m_{1}%
}\left(  q^{k_{1}}x-\lambda_{m_{2}}\right)  ^{-m_{2}}y^{k_{1}+k_{2}}.
\end{align*}
As in the proof of Lemma \ref{lT}, we derive that
\[
f^{s}=\sum_{I,M,K\in\mathbb{Z}_{+}^{s}}a_{IMK}q^{\left\langle I^{\ast
},K_{\left(  s\right)  }\right\rangle }x^{\left\vert I\right\vert }%
\prod\limits_{t=1}^{s}\left(  q^{\left\vert K_{\left(  t\right)  }\right\vert
}x-\lambda_{m_{t}}\right)  ^{-m_{t}}y^{\left\vert K\right\vert },
\]
where $K_{\left(  t\right)  }=\left(  k_{1},\ldots,k_{t-1}\right)
\in\mathbb{Z}_{+}^{t-1}$, $I^{\ast}=\left(  \left\vert I^{\left(  1\right)
}\right\vert ,\left\vert I^{\left(  2\right)  }\right\vert ,\ldots,\left\vert
I^{\left(  s-1\right)  }\right\vert \right)  \in\mathbb{Z}_{+}^{s-1}$,
$I^{\left(  t\right)  }=\left(  i_{t+1},\ldots,i_{s}\right)  \in\mathbb{Z}%
_{+}^{s-t}$, $1\leq t\leq s-1$ are the same index notations from Subsection
\ref{subsecIN}. Since $0\in\sigma\left(  x\right)  $, it follows that
$\min\left\{  \left\vert \lambda_{m}\right\vert \right\}  \geq\varepsilon>0$
for some $0<\varepsilon\leq1$, and $\left\Vert q^{l}x\right\Vert
\leq\varepsilon/2$ for all large $l\geq n$. Let $F$ be the set of all tuples
$J=\left(  j_{1},\ldots,j_{p}\right)  $ of positive integers so that
$\left\vert J\right\vert <n$, and put
\[
C_{f,x}=1\vee\max_{J\in F}\left\Vert \prod\limits_{v=1}^{p}\left(
q^{\left\vert J_{\left(  v\right)  }\right\vert }x-\lambda_{m_{v}}\right)
^{-m_{v}}\right\Vert
\]
to be a positive constant, which depends on $f$ and $x$ only. Then
\begin{align*}
\left\Vert \prod\limits_{t=1}^{s}\left(  q^{\left\vert K_{\left(  t\right)
}\right\vert }x-\lambda_{m_{t}}\right)  ^{-m_{t}}\right\Vert  &  \leq
C_{f,x}\left\Vert \prod\limits_{\left\vert K_{\left(  t\right)  }\right\vert
\geq n}^{s}\left(  q^{\left\vert K_{\left(  t\right)  }\right\vert }%
x-\lambda_{m_{t}}\right)  ^{-m_{t}}\right\Vert \\
&  \leq C_{f,x}\prod\limits_{\left\vert K_{\left(  t\right)  }\right\vert \geq
n}^{s}\left\vert \lambda_{m_{t}}\right\vert ^{-m_{t}}\left(  1-\left\Vert
\lambda_{m_{t}}^{-1}q^{\left\vert K_{\left(  t\right)  }\right\vert
}x\right\Vert \right)  ^{-m_{t}}\\
&  \leq C_{f,x}\left(  2/\varepsilon\right)  ^{\left\vert M\right\vert }.
\end{align*}
Now assume that $f\in I_{xy}$. Using Lemma \ref{lAq} (see to the proof), we
conclude that%
\[
\left\Vert f^{s}\right\Vert \leq C_{f,x}\left\vert q\right\vert ^{s\left(
s-1\right)  /2}\sum_{M}\sum_{I,K\in\mathbb{N}^{s}}\left\vert a_{IMK}%
\right\vert \rho^{\left\vert I\right\vert +\left\vert M\right\vert +\left\vert
K\right\vert }=C_{f,x}\left\vert q\right\vert ^{s\left(  s-1\right)  /2}%
\rho_{f}^{s},
\]
where $\rho_{f}=\sum_{m}\sum_{i,k\in\mathbb{N}}\left\vert a_{imk}\right\vert
\rho^{i+k+m}$ with $\rho\geq\max\left\{  \left\Vert x\right\Vert ,\left\Vert
y\right\Vert ,2/\varepsilon\right\}  $. Hence
\[
\left\Vert f^{s}\right\Vert ^{1/s}\leq C_{f,x}^{1/s}\left\vert q\right\vert
^{\left(  s-1\right)  /2}\rho_{f},
\]
which means that $f\in Q\left(  \mathcal{A}_{q,x}\right)  $.
\end{proof}

As above we use the notations $\mathcal{I}_{x}$, $\mathcal{I}_{xy}$ and
$\mathcal{I}_{y}$ for the related norm closures of $I_{x}$, $I_{xy}$ and
$I_{y}$ in the Banach algebra $\mathcal{A}_{q,x}$. Thus $\mathcal{I}%
_{x}=\mathcal{R}\left(  x\right)  ^{-}$ is the inverse closed subalgebra in
$\mathcal{B}$ generated by $x$, $\mathcal{I}_{xy}$ is a closed two sided ideal
in $\mathcal{A}_{q,x}$, and $\mathcal{I}_{y}$ is the closed (non-unital)
subalgebra generated by $y$. The statement of Lemma \ref{lemxIC1} can be
generalized in the following way.

\begin{proposition}
\label{propTQA}Let $A_{q}$ be a contractive $q$-plane in a Banach algebra
$\mathcal{B}$ with the $q$-compact spectrum $\sigma\left(  x\right)  $,
$U\subseteq\mathbb{C}$ a $q$-open subset containing $\sigma\left(  x\right)  $
whose topological boundary $\partial U$ consists of a finite union of
piecewise smooth curves, $K=U^{-}$ the closure of $U$ in the standard topology
of $\mathbb{C}$, and let $f=\sum_{k\in\mathbb{N}}f_{k}\left(  x\right)  y^{k}$
be an element of $\mathcal{I}_{xy}$ in $\mathcal{A}_{q,x}$ with $\left\{
f_{k}\right\}  \subseteq\mathcal{O}\left(  K\right)  $, $f_{k}\left(
0\right)  =0$ and $\sum_{k}\left\Vert f_{k}\right\Vert _{K}\left\Vert
y\right\Vert ^{k}<\infty$. Then $f\in Q\left(  \mathcal{A}_{q,x}\right)  $.
\end{proposition}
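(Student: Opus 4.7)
The plan is to adapt the strategy of Lemma \ref{lemxIC1} by replacing the rational commutation formula (\ref{yxm}) with its holomorphic counterpart. First I would promote (\ref{yxm}) to the assertion
\[
y^{k}g(x)=g(q^{k}x)\,y^{k},\qquad g\in\mathcal{O}(K),
\]
by inserting the Cauchy representation $g(x)=(2\pi i)^{-1}\oint_{\partial U}g(\lambda)(\lambda-x)^{-1}\,d\lambda$ provided by the holomorphic functional calculus on $K$, and pulling $y^{k}$ through the integral using the special case $y^{k}(\lambda-x)^{-1}=(\lambda-q^{k}x)^{-1}y^{k}$ of (\ref{yxm}). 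The resulting integral equals $g(q^{k}x)$ via the HFC, because the $q$-compactness of $\sigma(x)$ gives $\sigma(q^{k}x)=q^{k}\sigma(x)\subseteq\sigma(x)\subset U$ for every $k\geq0$. Observe as well that $0\in\sigma(x)\subseteq U$, so $d:=\operatorname{dist}(0,\partial U)>0$.

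Armed with this commutation, I would expand, in the spirit of Lemmas \ref{lT} and \ref{lemxIC1},
\[
f^{s}=\sum_{K\in\mathbb{N}^{s}}\prod_{t=1}^{s}f_{k_{t}}\!\left(q^{|K_{(t)}|}x\right)\cdot y^{|K|},\qquad |K_{(t)}|=k_{1}+\cdots+k_{t-1}.
\]
The vanishing $f_{k}(0)=0$ provides the factorization $f_{k}(z)=z\,h_{k}(z)$ with $h_{k}\in\mathcal{O}(K)$ and, by the maximum principle applied on $\partial U$, $\|h_{k}\|_{K}\leq\|f_{k}\|_{K}/d$. Multiplicativity of the HFC then yields $f_{k}(q^{j}x)=q^{j}x\cdot h_{k}(q^{j}x)$, whence
\[
\|f_{k}(q^{j}x)\|\leq |q|^{j}\|x\|\cdot\|h_{k}(q^{j}x)\|\leq C\,|q|^{j}\|f_{k}\|_{K},
\]
for a constant $C=C(x,q,U)>0$ independent of both $k$ and $j$. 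The essential gain is the factor $|q|^{j}$, which drives the whole estimate.

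To conclude, substituting these bounds into the expansion and exploiting $k_{i}\geq1$ yields
\[
\sum_{t=1}^{s}|K_{(t)}|=\sum_{i=1}^{s-1}(s-i)\,k_{i}\geq\frac{s(s-1)}{2},
\]
so, with $\rho_{f}=\sum_{k}\|f_{k}\|_{K}\|y\|^{k}<\infty$,
\[
\|f^{s}\|\leq C^{s}\,|q|^{s(s-1)/2}\Bigl(\sum_{k}\|f_{k}\|_{K}\|y\|^{k}\Bigr)^{s}=C^{s}\,|q|^{s(s-1)/2}\,\rho_{f}^{s},
\]
hence $\|f^{s}\|^{1/s}\leq C\,|q|^{(s-1)/2}\rho_{f}\to0$ as $s\to\infty$, proving $f\in Q(\mathcal{A}_{q,x})$.

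The main obstacle I anticipate is obtaining the uniform resolvent bound that feeds the constant $C$, namely $\sup\{\|(\lambda-q^{j}x)^{-1}\|:\lambda\in\partial U,\ j\geq0\}<\infty$. This splits naturally into a tail range $j\geq j_{0}$ with $\|q^{j}x\|<\min_{\lambda\in\partial U}|\lambda|/2$, handled by the Neumann series, and a finite initial range, handled by the existence of the HFC and continuity of the resolvent along the compact contour $\partial U$. A secondary technical point is the absolute convergence of the multi-indexed series defining $f^{s}$ in $\mathcal{A}_{q,x}$, which is in turn guaranteed by the same estimates.
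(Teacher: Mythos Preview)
Your argument is correct and matches the paper's: the same commutation $y^{k}g(x)=g(q^{k}x)\,y^{k}$ via the Cauchy integral, the same factorization $f_{k}(z)=z\,h_{k}(z)$ with $\|h_{k}\|_{K}$ controlled by $\|f_{k}\|_{K}$, and the same extraction of $|q|^{s(s-1)/2}$ from the expansion of $f^{s}$. The paper bypasses your anticipated uniform-resolvent obstacle by observing that $h_{k}(q^{j}x)$ is the \emph{single} functional calculus of $x$ applied to the function $z\mapsto h_{k}(q^{j}z)\in\mathcal{A}(K)$ (since $q^{j}K\subseteq K$, $U$ being $q$-open), giving $\|h_{k}(q^{j}x)\|\leq C\|h_{k}\|_{K}$ with the one fixed constant $C$ and no tail/initial split needed.
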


\begin{proof}
Recall that $\mathcal{A}\left(  K\right)  $ is the closure of the algebra
$\mathcal{O}\left(  K\right)  $ in $C\left(  K\right)  $ (see Subsection
\ref{subsecQT}). Since $\sigma\left(  x\right)  \subseteq U\subseteq U^{-}=K$,
it follows that the functional calculus
\[
\mathcal{A}\left(  K\right)  \rightarrow\mathcal{B},\quad g\mapsto g\left(
x\right)  =\dfrac{1}{2\pi i}\oint\limits_{\partial U}g\left(  \lambda\right)
\left(  \lambda-x\right)  ^{-1}d\lambda
\]
turns out to be a unital bounded homomorphism (see Lemma \ref{lemAK0}) with
its norm $C>0$. Using (\ref{yxm}), we derive that
\[
y^{k}g\left(  x\right)  =\dfrac{1}{2\pi i}\oint\limits_{\partial U}g\left(
\lambda\right)  y^{k}\left(  \lambda-x\right)  ^{-1}d\lambda=\dfrac{1}{2\pi
i}\oint\limits_{\partial U}g\left(  \lambda\right)  \left(  \lambda
-q^{k}x\right)  ^{-1}y^{k}d\lambda=g\left(  q^{k}x\right)  y^{k}%
\]
for all $g\in\mathcal{A}\left(  K\right)  $. Note that $\sigma\left(
q^{k}x\right)  =q^{k}\sigma\left(  x\right)  \subseteq\sigma\left(  x\right)
_{q}=\sigma\left(  x\right)  \subseteq U$, and the functional calculus holds
for $q^{k}x$ too. Actually, $g\left(  q^{k}x\right)  $ is the result of the
functional calculus of $x$ applied to the function $g\left(  q^{k}z\right)  $
from $\mathcal{A}\left(  K\right)  $ (recall that $U$ is $q$-open). Further,
$\left\Vert f\right\Vert \leq\sum_{k\in\mathbb{N}}\left\Vert f_{k}\left(
x\right)  \right\Vert \left\Vert y\right\Vert ^{k}\leq C\sum_{k}\left\Vert
f_{k}\right\Vert _{K}\left\Vert y\right\Vert ^{k}<\infty$, that is,
$f=\sum_{k\in\mathbb{N}}f_{k}\left(  x\right)  y^{k}$ is an absolutely
convergent series in $\mathcal{B}$. As in the proof of Lemma \ref{lemxIC1}, we
have
\begin{align*}
f^{s}  &  =\sum f_{k_{1}}\left(  x\right)  f_{k_{2}}\left(  q^{k_{1}}x\right)
f_{k_{3}}\left(  q^{k_{1}+k_{2}}x\right)  \cdots f_{k_{s}}\left(
q^{k_{1}+\cdots+k_{s-1}}x\right)  y^{k_{1}+\cdots+k_{s}}\\
&  =\sum_{K}q^{\left(  s-1\right)  k_{1}+\left(  s-2\right)  k_{2}%
+\cdots+k_{s-1}}x^{s}g_{k_{1}}\left(  x\right)  g_{k_{2}}\left(  q^{k_{1}%
}x\right)  g_{k_{3}}\left(  q^{k_{1}+k_{2}}x\right)  \cdots g_{k_{s}}\left(
q^{k_{1}+\cdots+k_{s}}x\right)  y^{k_{1}+\cdots+k_{s}},
\end{align*}
where $f_{k}\left(  z\right)  =zg_{k}\left(  z\right)  $ with $g_{k}\left(
z\right)  \in\mathcal{O}\left(  K\right)  $. By assumption, $\mathbb{D}%
_{\varepsilon}\subseteq U$ for a small $\varepsilon>0$. But $\left\Vert
g_{k}\right\Vert _{\mathbb{D}_{\varepsilon}}=\left\vert g_{k}\left(
z_{k}\right)  \right\vert $ for some $z_{k}\in\mathbb{D}_{\varepsilon}$ with
$\left\vert z_{k}\right\vert =\varepsilon$. Therefore $\left\Vert
g_{k}\right\Vert _{\mathbb{D}_{\varepsilon}}=\varepsilon\left\vert
g_{k}\left(  z_{k}\right)  \right\vert \varepsilon^{-1}=\left\vert
f_{k}\left(  z_{k}\right)  \right\vert \varepsilon^{-1}\leq\left\Vert
f_{k}\right\Vert _{K}\varepsilon^{-1}$. If $z\in K\backslash\mathbb{D}%
_{\varepsilon}$ then $\left\vert g_{k}\left(  z\right)  \right\vert
=\left\vert f_{k}\left(  z\right)  \right\vert \left\vert z\right\vert
^{-1}\leq\left\vert f_{k}\left(  z\right)  \right\vert \varepsilon^{-1}%
\leq\left\Vert f_{k}\right\Vert _{K}\varepsilon^{-1}$. Hence $\left\Vert
g_{k}\right\Vert _{K}\leq\left\Vert f_{k}\right\Vert _{K}\varepsilon^{-1}$ for
all $k$. It follows that
\[
\left\Vert f^{s}\right\Vert \leq\left\Vert x\right\Vert ^{s}\left\vert
q\right\vert ^{s\left(  s-1\right)  /2}C^{s}\sum\left\Vert g_{k_{1}%
}\right\Vert _{K}\cdots\left\Vert g_{k_{s}}\right\Vert _{K}\left\Vert
y\right\Vert ^{k_{1}+\cdots+k_{s}}=C^{s}\left\Vert x\right\Vert ^{s}\left\vert
q\right\vert ^{s\left(  s-1\right)  /2}\varepsilon^{-s}\rho_{f}^{s},
\]
where $\rho_{f}=\sum_{k}\left\Vert f_{k}\right\Vert _{K}\left\Vert
y\right\Vert ^{k}$. In particular,
\[
\left\Vert f^{s}\right\Vert ^{1/s}\leq C\left\Vert x\right\Vert \left\vert
q\right\vert ^{\left(  s-1\right)  /2}\varepsilon^{-1}\rho_{f},
\]
which means that $f\in Q\left(  \mathcal{A}_{q,x}\right)  $.
\end{proof}

\subsection{The Banach $q$-plane $\mathcal{A}_{q}\left(  \rho\right)
$\label{subsecBqA}}

Now let $\mathcal{A}_{q}$ be a Banach $q$-plane with $\left\vert q\right\vert
\leq1$, and let $\rho$ be a positive real with $\rho\geq\max\left\{
\left\Vert x\right\Vert ,\left\Vert y\right\Vert \right\}  $. Suppose that the
correspondence
\[
f=\sum_{i,k\in\mathbb{Z}_{+}}a_{ik}x^{i}y^{k}\mapsto\left\Vert f\right\Vert
_{\rho}=\sum_{i,k\in\mathbb{Z}_{+}}\left\vert a_{ik}\right\vert \rho^{i+k}%
\]
(see to the proof of Lemma \ref{lAq}) defines a continuous mapping over all
polynomials $A_{q}$ of the Banach $q$-plane $\mathcal{A}_{q}$. Then
$\left\Vert \cdot\right\Vert _{\rho}$ defines a continuous seminorm on $A_{q}$
such that $\left\Vert f\right\Vert \leq\left\Vert f\right\Vert _{\rho}\leq
C\left\Vert f\right\Vert $, $f\in A_{q}$ for some $C>0$. It turns out that
$A_{q}=\mathfrak{A}_{q}$ and $\left\Vert \cdot\right\Vert _{\rho}$ defines a
norm on $\mathcal{A}_{q}$ which is equivalent to the original one. We denote
this Banach $q$-plane by $\mathcal{A}_{q}\left(  \rho\right)  $. Actually,
$\left\Vert \cdot\right\Vert _{\rho}$ is multiplicative \cite[Section
5.3]{Pir}, and
\[
\mathcal{A}_{q}\left(  \rho\right)  =\left\{  f=\sum_{i,k\in\mathbb{Z}_{+}%
}a_{ik}x^{i}y^{k}:\left\Vert f\right\Vert _{\rho}=\sum_{i,k\in\mathbb{Z}_{+}%
}\left\vert a_{ik}\right\vert \rho^{i+k}<\infty\right\}
\]
is the Banach space completion of the quantum plane $\mathfrak{A}_{q}$
($\left\vert q\right\vert \leq1$) (see \cite[Corollary 5.14]{Pir}). If
$\left\vert q\right\vert \geq1$ we obtain (see \cite{Pir}) the following
Banach $q$-plane%
\[
\mathcal{A}_{q}\left(  \rho\right)  =\left\{  f=\sum_{i,k\in\mathbb{Z}_{+}%
}a_{ik}x^{i}y^{k}:\left\Vert f\right\Vert _{\rho}=\sum_{i,k\in\mathbb{Z}_{+}%
}\left\vert a_{ik}\right\vert \left\vert q\right\vert ^{-ik}\rho^{i+k}%
<\infty\right\}  .
\]
The ideal $\mathcal{I}_{xy}$ and subalgebras $\mathcal{I}_{x}$, $\mathcal{I}%
_{y}$ in the algebra $\mathcal{A}_{q}\left(  \rho\right)  $ introduced above
are denoted by $\mathcal{I}_{xy}^{\left(  \rho\right)  }$ and $\mathcal{I}%
_{x}^{\left(  \rho\right)  }$, $\mathcal{I}_{y}^{\left(  \rho\right)  }$, respectively.

\begin{proposition}
\label{pAfree}The Banach $q$-plane $\mathcal{A}_{q}\left(  \rho\right)  $
admits the following decomposition%
\[
\mathcal{A}_{q}\left(  \rho\right)  =\mathcal{I}_{x}^{\left(  \rho\right)
}\oplus\mathcal{I}_{xy}^{\left(  \rho\right)  }\oplus\mathcal{I}_{y}^{\left(
\rho\right)  }\quad\text{and\quad}\mathcal{I}_{xy}^{\left(  \rho\right)
}=\cap\left\{  \ker\left(  \lambda\right)  :\lambda\in\operatorname{Spec}%
\left(  \mathcal{A}_{q}\left(  \rho\right)  \right)  \right\}  .
\]
If $\left\vert q\right\vert \neq1$ then $\mathcal{I}_{xy}^{\left(
\rho\right)  }=\operatorname{Rad}\mathcal{A}_{q}\left(  \rho\right)  $ and
$\mathcal{A}_{q}\left(  \rho\right)  $ is commutative modulo its Jacobson
radical. If $\left\vert q\right\vert =1$ then $Q\left(  \mathcal{A}_{q}\left(
\rho\right)  \right)  \subseteq\mathcal{I}_{xy}^{\left(  \rho\right)  }$ and
$x^{i}y^{k}\in\mathcal{I}_{xy}^{\left(  \rho\right)  }\backslash Q\left(
\mathcal{A}_{q}\left(  \rho\right)  \right)  $ for all $i,k\in\mathbb{N}$.
\end{proposition}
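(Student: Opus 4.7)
The plan is to treat the three assertions separately: the topological direct sum together with the character-intersection identity; the identification with the Jacobson radical when $|q|\neq 1$; and the asymmetry between $Q(\mathcal{A}_{q}(\rho))$ and $\mathcal{I}_{xy}^{(\rho)}$ when $|q|=1$.

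First I would obtain the decomposition by splitting the canonical series representation into its three natural pieces. Every $f=\sum a_{ik}x^{i}y^{k}\in\mathcal{A}_{q}(\rho)$ decomposes as $f_{x}+f_{xy}+f_{y}$ with $f_{x}=\sum_{i}a_{i0}x^{i}$, $f_{xy}=\sum_{i,k\ge 1}a_{ik}x^{i}y^{k}$, $f_{y}=\sum_{k\ge 1}a_{0k}y^{k}$. Each summand has $\|\cdot\|_{\rho}$-norm bounded by $\|f\|_{\rho}$, so the three projections are continuous, and Lemma \ref{lyx} identifies the summands as the stated closures $\mathcal{I}_{x}^{(\rho)}$, $\mathcal{I}_{xy}^{(\rho)}$, $\mathcal{I}_{y}^{(\rho)}$. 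For the character intersection I would observe that every $\lambda\in\operatorname{Spec}(\mathcal{A}_{q}(\rho))$ satisfies $\lambda(x)\lambda(y)=q^{-1}\lambda(y)\lambda(x)$ in $\mathbb{C}$; commutativity of the scalars combined with $q\neq 1$ forces $\lambda(x)=0$ or $\lambda(y)=0$. Conversely, for every $\alpha\in\mathbb{C}$ with $|\alpha|\le\rho$ the assignments $(x,y)\mapsto(\alpha,0)$ and $(x,y)\mapsto(0,\alpha)$ respect the quantum relation and extend (bounded by $\|\cdot\|_{\rho}$) to continuous characters. Hence $\mathcal{I}_{xy}^{(\rho)}\subseteq\bigcap\ker\lambda$; conversely, annihilation of all $(\alpha,0)$-characters gives $\sum_{i}a_{i0}\alpha^{i}=0$ on a disk, forcing $a_{i0}=0$, and symmetrically $a_{0k}=0$, so equality holds.

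For $|q|\neq 1$, I would first reduce to $|q|<1$ via the flip $x\leftrightarrow y$ (which converts the $q$-plane relation into the $q^{-1}$-plane relation, as noted in Section \ref{secInt}), and then upgrade the polynomial estimate of Lemma \ref{lAq} to the full closed ideal. For $f=\sum_{i,k\ge 1}a_{ik}x^{i}y^{k}\in\mathcal{I}_{xy}^{(\rho)}$ the power formula of Lemma \ref{lT} survives by absolute convergence in $\|\cdot\|_{\rho}$ and yields
\[
\|f^{s}\|_{\rho}\le\sum_{I,K\in\mathbb{N}^{s}}|a_{IK}||q|^{\langle I^{\ast},K_{(s)}\rangle}\rho^{|I|+|K|}\le |q|^{s(s-1)/2}\|f\|_{\rho}^{s},
\]
using the combinatorial bound $\langle I^{\ast},K_{(s)}\rangle\ge s(s-1)/2$, which holds precisely because all $i_{t},k_{t}\ge 1$. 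Thus $\mathcal{I}_{xy}^{(\rho)}\subseteq Q(\mathcal{A}_{q}(\rho))$; being a closed two-sided ideal of quasinilpotents, it is contained in $\operatorname{Rad}\mathcal{A}_{q}(\rho)$ by the standard fact that $1-ab$ is invertible whenever $ab$ is quasinilpotent. Since $\operatorname{Rad}\mathcal{A}_{q}(\rho)\subseteq\bigcap\ker\lambda=\mathcal{I}_{xy}^{(\rho)}$ is automatic, equality follows, and commutativity modulo the radical is then an immediate consequence of Proposition \ref{pequi}.

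For $|q|=1$, the inclusion $Q(\mathcal{A}_{q}(\rho))\subseteq\mathcal{I}_{xy}^{(\rho)}$ is immediate, since every character sends a quasinilpotent element to a point of its spectrum $\{0\}$. To exhibit monomials outside $Q$, a direct induction from $y^{k}x^{i}=q^{ik}x^{i}y^{k}$ gives $(x^{i}y^{k})^{s}=q^{\binom{s}{2}ik}x^{si}y^{sk}$, so in the $|q|=1$ norm $\|(x^{i}y^{k})^{s}\|_{\rho}=\rho^{s(i+k)}$, and hence $\|(x^{i}y^{k})^{s}\|_{\rho}^{1/s}=\rho^{i+k}$, which does not tend to $0$ whenever $i,k\in\mathbb{N}$. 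The main obstacle is the $|q|<1$ step: one must make rigorous the passage from the polynomial estimate of Lemma \ref{lAq} to the norm closure, verifying both that the formula of Lemma \ref{lT} still applies termwise to absolutely convergent series in $\mathcal{A}_{q}(\rho)$ and that membership in $\mathcal{I}_{xy}^{(\rho)}$ genuinely restricts the summation indices to $\mathbb{N}^{s}$, which is precisely what produces the decisive decay factor $|q|^{s(s-1)/2}$.
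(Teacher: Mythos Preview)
Your proof is correct and broadly parallel to the paper's, but you diverge in three places worth noting. First, for $|q|>1$ you reduce to $|q|<1$ by the flip $x\leftrightarrow y$; the paper instead keeps the twisted norm $\sum|a_{ik}||q|^{-ik}\rho^{i+k}$ and does a direct index computation, establishing the identity $|I||K|-\langle I^{\ast},K_{(s)}\rangle=\langle K^{\ast},I_{(s)}\rangle+\langle I,K\rangle$ to extract the decay $|q^{-1}|^{s(s-1)/2}$. Your reduction is shorter and is explicitly sanctioned in the introduction, but one must check (as you implicitly do) that the flip carries the $|q|>1$ norm on $\mathcal{A}_{q}(\rho)$ isometrically to the $|q^{-1}|<1$ norm on $\mathcal{A}_{q^{-1}}(\rho)$, which follows from $y^{i}x^{k}=q^{ik}x^{k}y^{i}$. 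Second, for the reverse inclusion $\bigcap\ker\lambda\subseteq\mathcal{I}_{xy}^{(\rho)}$ you exhibit the evaluation characters $(\alpha,0)$ and $(0,\alpha)$ directly, whereas the paper passes to the quotient $\mathcal{A}_{q}(\rho)/\mathcal{I}_{xy}^{(\rho)}$ and invokes the joint-spectrum spectral mapping theorem to prove that quotient is semisimple; your route is more elementary and self-contained. Third, for $Q(\mathcal{A}_{q}(\rho))\subseteq\mathcal{I}_{xy}^{(\rho)}$ when $|q|=1$ you use the one-line observation $\lambda(u)\in\sigma(u)=\{0\}$, while the paper argues via the $\ell^{1}$-decomposition $u=a+h+b$ and the norm identity $\|u^{s}\|_{\rho}=\|a^{s}\|_{\rho}+\|h_{s}\|_{\rho}+\|b^{s}\|_{\rho}$; your argument is cleaner and in fact works for all $q$. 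Regarding the obstacle you flag: your termwise estimate for $\|f^{s}\|_{\rho}$ is legitimate by absolute convergence, and the paper handles the same passage by polynomial truncations $f_{n}\to f$ together with continuity of multiplication, so either route closes the gap.
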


\begin{proof}
First, note that $\mathcal{I}_{xy}^{\left(  \rho\right)  }=\left\{
f=\sum_{i,k\in\mathbb{N}}a_{ik}x^{i}y^{k}:\left\Vert f\right\Vert _{\rho
}<\infty\right\}  $. Indeed, the indicated set in the right hand side is a
closed two sided ideal in $\mathcal{A}_{q}\left(  \rho\right)  $ containing
$I_{xy}$. Moreover, each series like $\sum_{i,k\in\mathbb{N}}a_{ik}x^{i}y^{k}$
belongs to $\mathcal{I}_{xy}^{\left(  \rho\right)  }$, therefore the equality
follows. Since we deal with the absolutely convergent series, it follows that
$\mathcal{A}_{q}\left(  \rho\right)  =\mathcal{I}_{x}^{\left(  \rho\right)
}\oplus\mathcal{I}_{xy}^{\left(  \rho\right)  }\oplus\mathcal{I}_{y}^{\left(
\rho\right)  }$ is the direct $\ell^{1}$-sum (see Lemma \ref{lyx}) with the
closed subalgebras%
\[
\mathcal{I}_{x}^{\left(  \rho\right)  }=\left\{  f=\sum_{i\in\mathbb{Z}_{+}%
}a_{i}x^{i}:\sum_{i\in\mathbb{Z}_{+}}\left\vert a_{i}\right\vert \rho
^{i}<\infty\right\}  \quad\text{and\quad}\mathcal{I}_{y}^{\left(  \rho\right)
}=\left\{  g=\sum_{k\in\mathbb{N}}a_{k}y^{k}:\sum_{k\in\mathbb{N}}\left\vert
a_{k}\right\vert \rho^{k}<\infty\right\}  .
\]
If $\lambda\in\operatorname{Spec}\left(  \mathcal{A}_{q}\left(  \rho\right)
\right)  $ then $\left(  1-q^{-1}\right)  \lambda\left(  x\right)
\lambda\left(  y\right)  =\lambda\left(  xy-q^{-1}yx\right)  =0$, that is,
$\lambda\left(  xy\right)  =0$. Thus $\mathcal{I}_{xy}^{\left(  \rho\right)
}\subseteq\ker\left(  \lambda\right)  $. Consider the quotient algebra
$\mathcal{A}_{q}\left(  \rho\right)  /\mathcal{I}_{xy}^{\left(  \rho\right)
}$ and the related quotient mapping $\pi:\mathcal{A}_{q}\left(  \rho\right)
\rightarrow\mathcal{A}_{q}\left(  \rho\right)  /\mathcal{I}_{xy}^{\left(
\rho\right)  }$, $\pi\left(  g\right)  =g^{\sim}$. The commutative Banach
algebra $\mathcal{A}_{q}\left(  \rho\right)  /\mathcal{I}_{xy}^{\left(
\rho\right)  }$ is decomposed into the direct sum $\mathcal{A}_{q}\left(
\rho\right)  /\mathcal{I}_{xy}^{\left(  \rho\right)  }=\mathcal{I}_{x^{\sim}%
}^{\left(  \rho\right)  }\oplus\mathcal{I}_{y^{\sim}}^{\left(  \rho\right)  }$
of its closed subalgebras such that $x^{\sim}y^{\sim}=0^{\sim}$. Moreover,
$\mathcal{I}_{x^{\sim}}^{\left(  \rho\right)  }$ and $\mathcal{I}_{y^{\sim}%
}^{\left(  \rho\right)  }$ are identified with the subalgebras $\mathcal{I}%
_{x}^{\left(  \rho\right)  }$ and $\mathcal{I}_{y}^{\left(  \rho\right)  }$,
respectively. It follows that $\operatorname{Spec}\left(  \mathcal{A}%
_{q}\left(  \rho\right)  \right)  =\operatorname{Spec}\left(  \mathcal{A}%
_{q}\left(  \rho\right)  /\mathcal{I}_{xy}^{\left(  \rho\right)  }\right)  $
and for each $\lambda\in\operatorname{Spec}\left(  \mathcal{A}_{q}\left(
\rho\right)  /\mathcal{I}_{xy}^{\left(  \rho\right)  }\right)  $ we have
either $\lambda\left(  x^{\sim}\right)  =0$ or $\lambda\left(  y^{\sim
}\right)  =0$. Note that $\mathcal{I}_{x^{\sim}}^{\left(  \rho\right)
}=\mathcal{A}\left(  \rho\right)  $ and $\mathcal{I}_{y^{\sim}}^{\left(
\rho\right)  }\oplus\mathbb{C}=\mathcal{A}\left(  \rho\right)  $ are in turn
identified with the Banach algebras of holomorphic functions on the closed
disk $\mathbb{D}_{\rho}$ (see Subsection \ref{subsecAKR}). Therefore they do
not have nontrivial quasinilpotents. Thus%
\[
\operatorname{Spec}\left(  \mathcal{A}_{q}\left(  \rho\right)  /\mathcal{I}%
_{xy}^{\left(  \rho\right)  }\right)  =\sigma\left(  x^{\sim},y^{\sim}\right)
=\left(  \sigma\left(  x^{\sim}\right)  \times\left\{  0\right\}  \right)
\cup\left(  \left\{  0\right\}  \times\sigma\left(  y^{\sim}\right)  \right)
=\left(  \mathbb{D}_{\rho}\times\left\{  0\right\}  \right)  \cup\left(
\left\{  0\right\}  \times\mathbb{D}_{\rho}\right)  ,
\]
where $\sigma\left(  x^{\sim},y^{\sim}\right)  $ is the joint spectrum (see
\cite[1.3.5]{BourST}) of $\left\{  x^{\sim},y^{\sim}\right\}  $ in
$\mathcal{A}_{q}\left(  \rho\right)  /\mathcal{I}_{xy}^{\left(  \rho\right)
}$.

If $\sigma\left(  f^{\sim}+g^{\sim}\right)  =\left\{  0\right\}  $ for some
$f^{\sim}\in\mathcal{I}_{x^{\sim}}^{\left(  \rho\right)  }$, $g^{\sim}%
\in\mathcal{I}_{y^{\sim}}^{\left(  \rho\right)  }$, then by using the spectral
mapping theorem (see \cite[1.4.7]{BourST}) for the joint spectrum, we derive
that $\lambda\left(  f^{\sim}\right)  =0$ and $\mu\left(  g^{\sim}\right)  =0$
for all $\lambda\in\mathbb{D}_{\rho}\times\left\{  0\right\}  $ and $\mu
\in\left\{  0\right\}  \times\mathbb{D}_{\rho}$. It follows that $f^{\sim
}=g^{\sim}=0$. Thus $\mathcal{A}_{q}\left(  \rho\right)  /\mathcal{I}%
_{xy}^{\left(  \rho\right)  }$ is a semisimple commutative Banach algebra.
Hence $\operatorname{Rad}\mathcal{A}_{q}\left(  \rho\right)  \subseteq
\mathcal{I}_{xy}^{\left(  \rho\right)  }$ and $\cap\left\{  \ker\left(
\lambda\right)  :\lambda\in\operatorname{Spec}\left(  \mathcal{A}_{q}\left(
\rho\right)  /\mathcal{I}_{xy}^{\left(  \rho\right)  }\right)  \right\}
=\left\{  0\right\}  $. Consequently, $\mathcal{I}_{xy}^{\left(  \rho\right)
}=\cap\left\{  \ker\left(  \lambda\right)  :\lambda\in\operatorname{Spec}%
\left(  \mathcal{A}_{q}\left(  \rho\right)  \right)  \right\}  $.

Now let us prove that each $f=\sum_{i,k\in\mathbb{N}}a_{ik}x^{i}y^{k}%
\in\mathcal{I}_{xy}^{\left(  \rho\right)  }$ is quasinilpotent whenever
$\left\vert q\right\vert \neq1$. First assume that $\left\vert q\right\vert
<1$. If $f$ is a polynomial the result follows from Lemma \ref{lAq}. In the
general case, we use (\ref{Ts}) from the proof of Lemma \ref{lAq}. Namely, put
$f_{n}=\sum_{1\leq i,k\leq n}a_{ik}x^{i}y^{k}\in I_{xy}$, $n\in\mathbb{N}$.
Using (\ref{Ts}) and the fact that $\mathcal{A}_{q}\left(  \rho\right)  $ is a
Banach algebra, we infer that
\[
\left\Vert f^{s}\right\Vert _{\rho}^{1/s}\leq\lim_{n}\left\Vert f_{n}%
^{s}\right\Vert _{\rho}^{1/s}\leq\lim_{n}\left\vert q\right\vert ^{\left(
s-1\right)  /2}\rho_{f_{n}}=\lim_{n}\left\vert q\right\vert ^{\left(
s-1\right)  /2}\left\Vert f_{n}\right\Vert _{\rho}=\left\vert q\right\vert
^{\left(  s-1\right)  /2}\left\Vert f\right\Vert _{\rho}.
\]
With $\left\vert q\right\vert <1$ in mind, we deduce that $f$ is quasinilpotent.

Now assume that $\left\vert q\right\vert >1$. Using Lemma \ref{lT}, we derive
that
\[
\left\Vert f_{n}^{s}\right\Vert _{\rho}\leq\sum_{1\leq I,K\leq n}\left\vert
a_{IK}\right\vert \left\vert q\right\vert ^{\left\langle I^{\ast},K_{\left(
s\right)  }\right\rangle }\left\vert q\right\vert ^{-\left\vert I\right\vert
\left\vert K\right\vert }\rho^{\left\vert I\right\vert +\left\vert
K\right\vert }.
\]
Put $p_{IK}=\left\vert I\right\vert \left\vert K\right\vert -\left\langle
I^{\ast},K_{\left(  s\right)  }\right\rangle $ for $I=\left(  i_{1}%
,\ldots,i_{s}\right)  $ and $K=\left(  k_{1},\ldots,k_{s}\right)  $. Note
that
\begin{align*}
p_{IK}  &  =\left\vert I\right\vert \left\vert K\right\vert -\sum_{t=1}%
^{s-1}\left\vert I^{\left(  t\right)  }\right\vert k_{t}=\left(
i_{1}+\left\vert I^{\left(  1\right)  }\right\vert \right)  \left(
k_{1}+\left\vert K^{\left(  1\right)  }\right\vert \right)  -\sum_{t=1}%
^{s-1}\left\vert I^{\left(  t\right)  }\right\vert k_{t}\\
&  =i_{1}k_{1}+\left\vert K^{\left(  1\right)  }\right\vert i_{1}+\left\vert
I^{\left(  1\right)  }\right\vert \left\vert K^{\left(  1\right)  }\right\vert
-\sum_{t=2}^{s-1}\left\vert I^{\left(  t\right)  }\right\vert k_{t}\\
&  =i_{1}k_{1}+\left\vert K^{\left(  1\right)  }\right\vert i_{1}+i_{2}%
k_{2}+\left\vert K^{\left(  2\right)  }\right\vert i_{2}+\left\vert I^{\left(
2\right)  }\right\vert \left\vert K^{\left(  2\right)  }\right\vert
-\sum_{t=3}^{s-1}\left\vert I^{\left(  t\right)  }\right\vert k_{t}\\
&  =\sum_{t=1}^{s-1}i_{t}k_{t}+\left\vert K^{\left(  t\right)  }\right\vert
i_{t}+i_{s}k_{s}=\left\langle K^{\ast},I_{\left(  s\right)  }\right\rangle
+\left\langle I,K\right\rangle
\end{align*}
(see Subsection \ref{subsecIN}). It follows that
\[
\left\vert q\right\vert ^{\left\langle I^{\ast},K_{\left(  s\right)
}\right\rangle }\left\vert q\right\vert ^{-\left\vert I\right\vert \left\vert
K\right\vert }=\left\vert q^{-1}\right\vert ^{p_{IK}}=\left\vert
q^{-1}\right\vert ^{\left\langle K^{\ast},I_{\left(  s\right)  }\right\rangle
+\left\langle I,K\right\rangle }\leq\left\vert q^{-1}\right\vert ^{s\left(
s-1\right)  /2}\left\vert q^{-1}\right\vert ^{\left\langle I,K\right\rangle }%
\]
(see to the proof of Lemma \ref{lAq}). Hence
\[
\left\Vert f_{n}^{s}\right\Vert _{\rho}\leq\left\vert q^{-1}\right\vert
^{s\left(  s-1\right)  /2}\sum_{1\leq I,K\leq n}\left\vert a_{IK}\right\vert
\left\vert q^{-1}\right\vert ^{\left\langle I,K\right\rangle }\rho^{\left\vert
I\right\vert +\left\vert K\right\vert }=\left\vert q^{-1}\right\vert
^{s\left(  s-1\right)  /2}\left\Vert f_{n}\right\Vert _{\rho}^{s},
\]
which in turn implies that
\[
\left\Vert f^{s}\right\Vert _{\rho}^{1/s}\leq\lim_{n}\left\Vert f_{n}%
^{s}\right\Vert _{\rho}^{1/s}\leq\lim_{n}\left\vert q^{-1}\right\vert
^{\left(  s-1\right)  /2}\left\Vert f_{n}\right\Vert _{\rho}=\left\vert
q^{-1}\right\vert ^{\left(  s-1\right)  /2}\left\Vert f\right\Vert _{\rho},
\]
that is, $f$ is quasinilpotent. Thus $\mathcal{I}_{xy}^{\left(  \rho\right)
}$ is a closed two sided ideal in $\mathcal{A}_{q}\left(  \rho\right)  $,
which consists of quasinilpotent elements whenever $\left\vert q\right\vert
\neq1$. Therefore $\mathcal{I}_{xy}^{\left(  \rho\right)  }\subseteq
\operatorname{Rad}\mathcal{A}_{q}\left(  \rho\right)  $. By Proposition
\ref{pequi}, we in turn deduce that $\mathcal{A}_{q}\left(  \rho\right)  $ is
commutative modulo its Jacobson radical. As we have proved above
$\operatorname{Rad}\mathcal{A}_{q}\left(  \rho\right)  \subseteq
\mathcal{I}_{xy}^{\left(  \rho\right)  }$, therefore $\mathcal{I}%
_{xy}^{\left(  \rho\right)  }=\operatorname{Rad}\mathcal{A}_{q}\left(
\rho\right)  $.

Finally, assume that $\left\vert q\right\vert =1$. Take an ordered monomial
$g=x^{i}y^{k}\in\mathcal{I}_{xy}^{\left(  \rho\right)  }$, $i,k\in\mathbb{N}$.
Using Lemma \ref{lT}, we derive that $g^{s}=q^{ik\left(  s-1\right)
s/2}x^{is}y^{ks}$ and
\[
\left\Vert g^{s}\right\Vert _{\rho}=\left\vert q\right\vert ^{ik\left(
s-1\right)  s/2}\rho^{\left(  i+k\right)  s}=\rho^{\left(  i+k\right)
s}=\left\Vert g\right\Vert _{\rho}^{s},
\]
which means that $\left\Vert g\right\Vert _{\rho}$ is just the spectral radius
of $g$ in the Banach algebra $\mathcal{A}_{q}\left(  \rho\right)  $. Hence
$g\notin Q\left(  \mathcal{A}_{q}\left(  \rho\right)  \right)  $ and certainly
$g\notin\operatorname{Rad}\mathcal{A}_{q}\left(  \rho\right)  $.

Finally, let us prove that $Q\left(  \mathcal{A}_{q}\left(  \rho\right)
\right)  \subseteq\mathcal{I}_{xy}^{\left(  \rho\right)  }$. As we have seen
above $\left(  \mathcal{I}_{x}^{\left(  \rho\right)  }+\mathcal{I}%
_{y}^{\left(  \rho\right)  }\right)  \cap Q\left(  \mathcal{A}_{q}\left(
\rho\right)  \right)  =\left\{  0\right\}  $. Take $u\in Q\left(
\mathcal{A}_{q}\left(  \rho\right)  \right)  $. Using the $\ell^{1}%
$-decomposition $\mathcal{A}_{q}\left(  \rho\right)  =\mathcal{I}_{x}^{\left(
\rho\right)  }\oplus\mathcal{I}_{xy}^{\left(  \rho\right)  }\oplus
\mathcal{I}_{y}^{\left(  \rho\right)  }$, we have $u=a+h+b$ for uniquely
defined $a\in\mathcal{I}_{x}^{\left(  \rho\right)  }$, $h\in\mathcal{I}%
_{xy}^{\left(  \rho\right)  }$ and $b\in\mathcal{I}_{y}^{\left(  \rho\right)
}$. Since $u^{s}=a^{s}+h_{s}+b^{s}$ for some $h_{s}\in\mathcal{I}%
_{xy}^{\left(  \rho\right)  }$, it follows that%
\[
0=\inf\left\{  \left\Vert u^{s}\right\Vert _{\rho}^{1/s}\right\}
=\inf\left\{  \left(  \left\Vert a^{s}\right\Vert _{\rho}+\left\Vert
h_{s}\right\Vert _{\rho}+\left\Vert b^{s}\right\Vert _{\rho}\right)
^{1/s}\right\}  \geq\inf\left\{  \max\left\{  \left\Vert a^{s}\right\Vert
_{\rho}^{1/s},\left\Vert b^{s}\right\Vert _{\rho}^{1/s}\right\}  \right\}  ,
\]
which means that $a=b=0$. Whence $u\in\mathcal{I}_{xy}^{\left(  \rho\right)
}$.
\end{proof}

\subsection{The operator $q$-planes}

By an operator $q$-plane we mean a Banach $q$-plane in the algebra
$\mathcal{B}\left(  \mathfrak{X}\right)  $ of bounded linear operators acting
on a complex Banach space $\mathfrak{X}$.

\begin{proposition}
\label{pAcom}Let $\mathcal{A}_{q}$ be a contractive Banach $q$-plane which is
a subalgebra of $\mathcal{B}\left(  \mathfrak{X}\right)  $ for a certain
Banach space $\mathfrak{X}$. If one of the generators $x$, $y$ of
$\mathcal{A}_{q}$ is a compact operator then $\mathcal{I}_{xy}\subseteq
\operatorname{Rad}\mathcal{A}_{q}$. In particular, $\mathcal{A}_{q}$ is
commutative modulo its Jacobson radical.
\end{proposition}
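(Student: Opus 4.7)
The plan is to reduce everything to proving that every element of $\mathcal{I}_{xy}$ is quasinilpotent in $\mathcal{A}_q$. Since $\mathcal{I}_{xy}$ is a closed two-sided ideal, once $\mathcal{I}_{xy}\subseteq Q(\mathcal{A}_q)$ is established, the product $af$ lies in $\mathcal{I}_{xy}\subseteq Q(\mathcal{A}_q)$ for every $f\in\mathcal{I}_{xy}$ and $a\in\mathcal{A}_q$, so $1-af$ is invertible; this is the standard criterion placing $f$ in $\operatorname{Rad}\mathcal{A}_q$. The ``in particular'' conclusion then follows from Proposition \ref{pequi}.

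The next step is to observe that every $f\in\mathcal{I}_{xy}$ is a compact operator. By Lemma \ref{lyx} the dense subspace $I_{xy}$ consists of finite sums $\sum_{i,k\geq 1}a_{ik}x^iy^k$ in which both $x$ and $y$ carry positive exponents, so if either of $x$ or $y$ is compact then every such monomial is compact; norm-closedness of $K(\mathfrak{X})$ in $\mathcal{B}(\mathfrak{X})$ passes compactness to the closure $\mathcal{I}_{xy}$. Consequently $\sigma_{\mathcal{B}(\mathfrak{X})}(f)$ is countable with only $0$ as accumulation, so its complement in $\mathbb{C}$ has no bounded components, and therefore $\sigma_{\mathcal{A}_q}(f)=\sigma_{\mathcal{B}(\mathfrak{X})}(f)$ by the standard relation between spectra in a unital Banach subalgebra and its ambient algebra.

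It remains to prove $\sigma(f)=\{0\}$. Choose polynomial approximants $f_n\in I_{xy}$ with $f_n\to f$ in operator norm; each $f_n$ is quasinilpotent by Lemma \ref{lAq}. Assume for contradiction that some nonzero $\mu$ belongs to $\sigma(f)$. Since $f$ is compact, $\mu$ is an isolated eigenvalue of finite multiplicity; pick a small circle $\Gamma$ about $\mu$ disjoint from $\sigma(f)$. Upper semicontinuity of the spectrum forces $\Gamma\cap\sigma(f_n)=\varnothing$ for all sufficiently large $n$, and the Riesz projections $P_n=\frac{1}{2\pi i}\oint_\Gamma(z-f_n)^{-1}\,dz$ then converge in norm to the nonzero finite-rank Riesz projection of $f$ at $\mu$. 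On the other hand $\sigma(f_n)=\{0\}$ sits in the exterior of $\Gamma$, so each $P_n=0$; passing to the norm limit yields $0$ on the left and a nonzero projection on the right, a contradiction. Therefore $\sigma(f)=\{0\}$ and $\mathcal{I}_{xy}\subseteq Q(\mathcal{A}_q)$.

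The main obstacle is exactly this limit step: in a general Banach algebra the set $Q(\mathcal{A}_q)$ of quasinilpotents is not norm-closed, so Lemma \ref{lAq} alone does not transfer quasinilpotency from the polynomial approximants to the limit. The compactness hypothesis on one of the generators is needed precisely so that Riesz--Schauder theory applies, supplying the spectral continuity used in the Riesz projection argument above.
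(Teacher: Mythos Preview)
Your proof is correct and follows essentially the same route as the paper: both observe that $\mathcal{I}_{xy}$ consists of compact operators, invoke Lemma \ref{lAq} on the dense subset $I_{xy}$, and then pass to the closure via spectral continuity for compact operators before applying Proposition \ref{pequi}. The only difference is that the paper simply cites \cite{Ll} for the continuity step, whereas you spell out the Riesz projection argument directly.
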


\begin{proof}
If $x$ or $y$ is a compact operator then $I_{xy}$ consists of the compact
operators on $\mathfrak{X}$. Therefore so are the elements of the closure
$\mathcal{I}_{xy}$. By Lemma \ref{lAq}, we have $I_{xy}\subseteq Q\left(
\mathcal{A}_{q}\right)  $. Using the continuity of the spectrum over all
compact operators \cite{Ll}, we conclude that the inclusion $\mathcal{I}%
_{xy}\subseteq Q\left(  \mathcal{A}_{q}\right)  $ holds too. Being
$\mathcal{I}_{xy}$ a two-sided ideal, we deduce that $\mathcal{I}%
_{xy}\subseteq\operatorname{Rad}\mathcal{A}_{q}$. It remains to apply
Proposition \ref{pequi}.
\end{proof}

\begin{example}
\label{exCom}Let $\mathfrak{X}=\ell_{2}$ be a separable Hilbert space with its
standard basis $\left\{  e_{n}\right\}  _{n\in\mathbb{Z}_{+}}$. Consider the
operators $T,S\in\mathcal{B}\left(  \ell_{2}\right)  $ acting by the rules%
\begin{align*}
T\left(  e_{n}\right)   &  =e_{n+1},\\
S\left(  e_{n}\right)   &  =q^{n}e_{n},
\end{align*}
for all $n\in\mathbb{Z}_{+}$. One can easily verify that $TS=q^{-1}ST$. Let
$\mathcal{A}_{q}$ be the closed associative hull of the set $\left\{
T,S\right\}  $ in $\mathcal{B}\left(  \ell_{2}\right)  $. Thus $\mathcal{A}%
_{q}$ is a Banach $q$-plane. If $\left\vert q\right\vert <1$ then
$\mathcal{A}_{q}$ is a contractive Banach $q$-plane, and in this case $S$ is a
compact operator. Using Proposition \ref{pAcom}, we conclude that
$\mathcal{I}_{TS}\subseteq\operatorname{Rad}\mathcal{A}_{q}$, and
$\mathcal{A}_{q}$ is commutative modulo its Jacobson radical.
\end{example}

\section{Arens-Michael envelope of $\mathfrak{A}_{q}$ and noncommutative plane
$\mathbb{C}_{xy}$}

In this section we construct a presheaf of noncommutative Fr\'{e}chet
$\widehat{\otimes}$-algebras on the noncommutative space $\mathbb{C}_{xy}$ of
$\mathfrak{A}_{q}$, $\left\vert q\right\vert \neq1$ which extends the
Arens-Michael envelope of $\mathfrak{A}_{q}$.

\subsection{The Fr\'{e}chet algebra $\mathcal{O}_{q}\left(  \mathbb{C}%
_{xy}\right)  $\label{sAME}}

The Arens-Michael envelope of the $q$-plane $\mathfrak{A}_{q}$ is denoted by
$\mathcal{O}_{q}\left(  \mathbb{C}^{2}\right)  $. As shown in \cite{Pir}, the
algebra $\mathcal{O}_{q}\left(  \mathbb{C}^{2}\right)  $ is the inverse limit
of the Banach algebras $\mathcal{A}_{q}\left(  \rho\right)  $, $\rho<\infty$
considered in Subsection \ref{subsecBqA}. For a couple of positive real $\rho$
and $r$ with $\rho<r$, the identity map on the $q$-plane $\mathfrak{A}_{q}$ is
extended up to a bounded (contractive) algebra homomorphism $u_{\rho
r}:\mathcal{A}_{q}\left(  r\right)  \rightarrow\mathcal{A}_{q}\left(
\rho\right)  $. This defines the inverse system $\mathcal{S}_{q}=\left\{
\mathcal{A}_{q}\left(  \rho\right)  ,u_{\rho r}:\rho<r\right\}  $ of the
Banach $q$-planes such that
\[
\mathcal{O}_{q}\left(  \mathbb{C}^{2}\right)  =\underleftarrow{\lim
}\mathcal{S}_{q}=\left\{  f=\sum_{i,k\in\mathbb{Z}_{+}}a_{ik}x^{i}%
y^{k}:\left\Vert f\right\Vert _{\rho}<\infty,\forall\rho>0\right\}  \text{.}%
\]
As above the closures of $I_{xy}$, $I_{x}$ and $I_{y}$ in $\mathcal{O}%
_{q}\left(  \mathbb{C}^{2}\right)  $ are denoted by $\mathcal{I}_{xy}$,
$\mathcal{I}_{x}$ and $\mathcal{I}_{y}$, respectively.

\begin{proposition}
\label{tO}The Fr\'{e}chet algebra $\mathcal{O}_{q}\left(  \mathbb{C}%
^{2}\right)  $ admits the following topological direct sum decomposition
\begin{align*}
\mathcal{O}_{q}\left(  \mathbb{C}^{2}\right)   &  =\mathcal{I}_{x}%
\oplus\mathcal{I}_{xy}\oplus\mathcal{I}_{y},\text{\quad}\mathcal{I}_{xy}%
=\cap\left\{  \ker\left(  \lambda\right)  :\lambda\in\operatorname{Spec}%
\left(  \mathcal{O}_{q}\left(  \mathbb{C}^{2}\right)  \right)  \right\}  ,\\
\operatorname{Rad}\mathcal{O}_{q}\left(  \mathbb{C}^{2}\right)   &
\subseteq\mathcal{I}_{xy},\quad\operatorname{Spec}\left(  \mathcal{O}%
_{q}\left(  \mathbb{C}^{2}\right)  \right)  =\operatorname{Spec}\left(
\mathcal{O}_{q}\left(  \mathbb{C}^{2}\right)  /\mathcal{I}_{xy}\right)
=\mathbb{C}_{xy},
\end{align*}
where $\mathbb{C}_{x}=\mathbb{C\times}\left\{  0\right\}  \subseteq
\mathbb{C}^{2}$, $\mathbb{C}_{y}=\left\{  0\right\}  \times\mathbb{C\subseteq
C}^{2}$ and $\mathbb{C}_{xy}=\mathbb{C}_{x}\cup\mathbb{C}_{y}$. If $\left\vert
q\right\vert \neq1$ then the algebra $\mathcal{O}_{q}\left(  \mathbb{C}%
^{2}\right)  $ is commutative modulo its Jacobson radical $\operatorname{Rad}%
\mathcal{O}_{q}\left(  \mathbb{C}^{2}\right)  $ and $\mathcal{I}%
_{xy}=\operatorname{Rad}\mathcal{O}_{q}\left(  \mathbb{C}^{2}\right)  $. If
$\left\vert q\right\vert =1$ then $Q\left(  \mathcal{O}_{q}\left(
\mathbb{C}^{2}\right)  \right)  \subseteq\mathcal{I}_{xy}$ and $x^{i}y^{k}%
\in\mathcal{I}_{xy}\backslash Q\left(  \mathcal{O}_{q}\left(  \mathbb{C}%
^{2}\right)  \right)  $ for all $i,k\in\mathbb{N}$.
\end{proposition}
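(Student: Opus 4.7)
The plan is to lift the Banach-algebra result of Proposition \ref{pAfree} to the Arens-Michael inverse limit $\mathcal{O}_{q}\left(  \mathbb{C}^{2}\right)  =\underleftarrow{\lim}_{\rho}\mathcal{A}_{q}\left(  \rho\right)$, exploiting that every statement except the $|q|=1$ inclusion is already true at each finite level $\rho$. First I would establish the topological direct sum. Using the explicit description of $\mathcal{O}_{q}\left(  \mathbb{C}^{2}\right)$ as the space of formal series $\sum a_{ik}x^{i}y^{k}$ with $\left\Vert f\right\Vert_{\rho}<\infty$ for all $\rho>0$, splitting according to whether $k=0$, $i,k\geq1$, or $i=0$ produces three absolutely convergent summands in every seminorm. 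Each summand class coincides with the level-wise closed subspaces $\mathcal{I}_{x}^{\left(  \rho\right)  }$, $\mathcal{I}_{xy}^{\left(  \rho\right)  }$, $\mathcal{I}_{y}^{\left(  \rho\right)  }$ from Proposition \ref{pAfree}, so the $\ell^{1}$-direct sum decomposition of each $\mathcal{A}_{q}\left(  \rho\right)$ passes to the inverse limit as a topological direct sum.

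Second, I would handle the spectrum. Every continuous character $\lambda$ satisfies $(1-q^{-1})\lambda(x)\lambda(y)=\lambda(xy-q^{-1}yx)=0$, hence $\lambda(xy)=0$ and $\mathcal{I}_{xy}\subseteq\ker(\lambda)$. The quotient $\mathcal{O}_{q}\left(  \mathbb{C}^{2}\right)  /\mathcal{I}_{xy}$ is then commutative and decomposes topologically as $\mathcal{I}_{x}\oplus\mathcal{I}_{y}$ with $x^{\sim}y^{\sim}=0$. By Lemma \ref{lemAK0} each summand identifies (after unitisation on the $y$-side) with the entire function algebra $\mathcal{O}(\mathbb{C})=\underleftarrow{\lim}_{\rho}\mathcal{A}(\rho)$, whose character space is $\mathbb{C}$. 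Since $\lambda(x)\lambda(y)=0$ forces one factor to vanish, $\operatorname{Spec}\left(  \mathcal{O}_{q}\left(  \mathbb{C}^{2}\right)  \right)  =\mathbb{C}_{x}\cup\mathbb{C}_{y}=\mathbb{C}_{xy}$, and the semisimplicity of $\mathcal{O}(\mathbb{C})$ (characters separate points) yields $\mathcal{I}_{xy}=\cap\ker(\lambda)$. The inclusion $\operatorname{Rad}\mathcal{O}_{q}\left(  \mathbb{C}^{2}\right)  \subseteq\mathcal{I}_{xy}$ is automatic because each $\ker(\lambda)$ is a closed maximal left ideal.

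For the case $\left\vert q\right\vert \neq1$, the reverse inclusion $\mathcal{I}_{xy}\subseteq\operatorname{Rad}\mathcal{O}_{q}\left(  \mathbb{C}^{2}\right)$ is the main step. Given $a\in\mathcal{I}_{xy}$ and arbitrary $b\in\mathcal{O}_{q}\left(  \mathbb{C}^{2}\right)$, Proposition \ref{pAfree} gives $u_{\rho}(ab)\in\mathcal{I}_{xy}^{\left(  \rho\right)  }=\operatorname{Rad}\mathcal{A}_{q}\left(  \rho\right)$ at every level, so $1-u_{\rho}(ab)$ is invertible in $\mathcal{A}_{q}\left(  \rho\right)$. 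Uniqueness of inverses makes the family of inverses compatible with the connecting contractions $u_{\rho r}$, assembling into an inverse of $1-ab$ in the inverse limit. Hence $a\in\operatorname{Rad}\mathcal{O}_{q}\left(  \mathbb{C}^{2}\right)$ and $\mathcal{I}_{xy}=\operatorname{Rad}\mathcal{O}_{q}\left(  \mathbb{C}^{2}\right)$; commutativity modulo the radical is then immediate from the commutativity of the quotient.

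For the case $\left\vert q\right\vert =1$, each continuous homomorphism $u_{\rho}$ cannot enlarge spectra, so any $a\in Q\left(  \mathcal{O}_{q}\left(  \mathbb{C}^{2}\right)  \right)$ has $u_{\rho}(a)\in Q\left(  \mathcal{A}_{q}\left(  \rho\right)  \right)  \subseteq\mathcal{I}_{xy}^{\left(  \rho\right)  }$ by Proposition \ref{pAfree}. Writing $a=a_{x}+a_{xy}+a_{y}$ via the direct sum and comparing with the uniqueness of the analogous decomposition in each $\mathcal{A}_{q}\left(  \rho\right)$, we force $u_{\rho}(a_{x})=u_{\rho}(a_{y})=0$ for all $\rho$, whence $a_{x}=a_{y}=0$ since the family $\{u_{\rho}\}$ separates points of the Fréchet inverse limit. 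The non-quasinilpotence of each $x^{i}y^{k}$ ($i,k\in\mathbb{N}$) follows at once from Proposition \ref{pAfree}, since at any single level $\mathcal{A}_{q}\left(  \rho\right)$ the spectral radius is $\rho^{i+k}>0$, and $\sigma_{\mathcal{O}_{q}}(x^{i}y^{k})\supseteq\sigma_{\mathcal{A}_{q}(\rho)}(x^{i}y^{k})$. The main obstacle is the bookkeeping in paragraph three: the Jacobson-radical inclusion must be lifted level-by-level in a way compatible with the direct sum, which relies crucially on the Arens-Michael structure (invertibility in the limit equals compatible invertibility at each level) and on the fact that the direct-sum projections are intertwined with the connecting maps $u_{\rho r}$.
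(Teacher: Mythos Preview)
Your proposal is correct and follows essentially the same approach as the paper: lift Proposition~\ref{pAfree} from each Banach level $\mathcal{A}_{q}(\rho)$ to the inverse limit $\mathcal{O}_{q}(\mathbb{C}^{2})=\underleftarrow{\lim}_{\rho}\mathcal{A}_{q}(\rho)$, using that the connecting maps $u_{\rho r}$ respect the three direct summands. The only cosmetic difference is in the $|q|\neq 1$ step: the paper invokes the standard fact $\sigma_{\mathcal{O}_{q}(\mathbb{C}^{2})}(fg)=\bigcup_{\rho}\sigma_{\mathcal{A}_{q}(\rho)}(f_{\rho}g_{\rho})$ for Arens--Michael inverse limits (Helemskii \cite[5.2.12]{Hel}) to conclude $fg$ is quasinilpotent, whereas you unpack this by assembling the compatible family of inverses $(1-u_{\rho}(ab))^{-1}$ directly---the two arguments are equivalent.
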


\begin{proof}
Take $\rho,r\in\mathbb{R}$ with $0<\rho<r$. Obviously, $u_{\rho r}\left(
\mathcal{I}_{x}^{\left(  r\right)  }\right)  \subseteq\mathcal{I}_{x}^{\left(
\rho\right)  }$, $u_{\rho r}\left(  \mathcal{I}_{xy}^{\left(  r\right)
}\right)  \subseteq\mathcal{I}_{xy}^{\left(  \rho\right)  }$, and $u_{\rho
r}\left(  \mathcal{I}_{y}^{\left(  r\right)  }\right)  \subseteq
\mathcal{I}_{y}^{\left(  \rho\right)  }$ (see Subsection \ref{subsecBqA}).
Using Proposition \ref{pAfree}, we deduce that $\mathcal{O}_{q}\left(
\mathbb{C}^{2}\right)  =\mathcal{I}_{x}\oplus\mathcal{I}_{xy}\oplus
\mathcal{I}_{y}$. Since each continuous character $\lambda:\mathcal{O}%
_{q}\left(  \mathbb{C}^{2}\right)  \rightarrow\mathbb{C}$ can be factorized
through a character $\mathcal{A}_{q}\left(  \rho\right)  \rightarrow
\mathbb{C}$, we conclude that $\mathcal{I}_{xy}=\cap\left\{  \ker\left(
\lambda\right)  :\lambda\in\operatorname{Spec}\left(  \mathcal{O}_{q}\left(
\mathbb{C}^{2}\right)  \right)  \right\}  $ thanks to Proposition
\ref{pAfree}. In particular, $\operatorname{Rad}\mathcal{O}_{q}\left(
\mathbb{C}^{2}\right)  \subseteq\mathcal{I}_{xy}$.

The quotient algebra $\mathcal{O}_{q}\left(  \mathbb{C}^{2}\right)
/\mathcal{I}_{xy}$ is commutative and $\operatorname{Spec}\left(
\mathcal{O}_{q}\left(  \mathbb{C}^{2}\right)  \right)  =\operatorname{Spec}%
\left(  \mathcal{O}_{q}\left(  \mathbb{C}^{2}\right)  /\mathcal{I}%
_{xy}\right)  $. Actually, $\mathcal{O}_{q}\left(  \mathbb{C}^{2}\right)
/\mathcal{I}_{xy}=\mathcal{I}_{x^{\sim}}\oplus\mathcal{I}_{y^{\sim}}$, where
$\mathcal{I}_{x^{\sim}}$ and $\mathcal{I}_{y^{\sim}}$ are the closed
subalgebras generated by $x^{\sim}$ and $y^{\sim}$ respectively. Note that
$\mathcal{I}_{x^{\sim}}$ is unital, and $x^{\sim}y^{\sim}=0$. Moreover,
$\sigma\left(  x^{\sim}\right)  =\sigma\left(  y^{\sim}\right)  =\bigcup
\limits_{\rho}\mathbb{D}_{\rho}=\mathbb{C}$, that is, $\mathcal{I}_{x^{\sim}%
}=\mathcal{O}\left(  \mathbb{C}\right)  $ and $\mathcal{I}_{y^{\sim}}%
\oplus\mathbb{C}1=\mathcal{O}\left(  \mathbb{C}\right)  $ are the algebras of
entire functions (see Lemma \ref{lemAK0}). It follows that
\begin{align*}
\operatorname{Spec}\left(  \mathcal{O}_{q}\left(  \mathbb{C}^{2}\right)
/\mathcal{I}_{xy}\right)   &  =\bigcup\limits_{\rho}\operatorname{Spec}\left(
\mathcal{A}_{q}\left(  \rho\right)  /\mathcal{I}_{xy}^{\left(  \rho\right)
}\right)  =\bigcup\limits_{\rho}\left(  \mathbb{D}_{\rho}\times\left\{
0\right\}  \right)  \cup\left(  \left\{  0\right\}  \times\mathbb{D}_{\rho
}\right) \\
&  =\left(  \mathbb{C\times}\left\{  0\right\}  \right)  \cup\left(  \left\{
0\right\}  \times\mathbb{C}\right)  =\mathbb{C}_{x}\cup\mathbb{C}%
_{y}=\mathbb{C}_{xy}\text{,}%
\end{align*}
that is, $\operatorname{Spec}\left(  \mathcal{O}_{q}\left(  \mathbb{C}%
^{2}\right)  \right)  =\operatorname{Spec}\left(  \mathcal{O}_{q}\left(
\mathbb{C}^{2}\right)  /\mathcal{I}_{xy}\right)  =\mathbb{C}_{xy}$.

Now assume that $\left\vert q\right\vert \neq1$. Take $g=\left(  g_{\rho
}\right)  \in\mathcal{I}_{xy}$ with $g_{\rho}\in\mathcal{I}_{xy}^{\left(
\rho\right)  }$, and take $f=\left(  f_{\rho}\right)  \in\mathcal{O}%
_{q}\left(  \mathbb{C}^{2}\right)  $. Using Proposition \ref{pAfree} and
\cite[5.2.12]{Hel}, we deduce that $\sigma\left(  fg\right)  =\bigcup
\limits_{\rho}\sigma\left(  f_{\rho}g_{\rho}\right)  =\left\{  0\right\}  $,
that is, $g\in\operatorname{Rad}\mathcal{O}_{q}\left(  \mathbb{C}^{2}\right)
$. Hence $\mathcal{I}_{xy}=\operatorname{Rad}\mathcal{O}_{q}\left(
\mathbb{C}^{2}\right)  $ and $\mathcal{O}_{q}\left(  \mathbb{C}^{2}\right)  $
is commutative modulo $\operatorname{Rad}\mathcal{O}_{q}\left(  \mathbb{C}%
^{2}\right)  $.

Finally, assume that $\left\vert q\right\vert =1$. Take $g=\left(  g_{\rho
}\right)  \in Q\left(  \mathcal{O}_{q}\left(  \mathbb{C}^{2}\right)  \right)
$. Since $\sigma\left(  g_{\rho}\right)  \subseteq\sigma\left(  g\right)
=\left\{  0\right\}  $, it follows that $g_{\rho}\in Q\left(  \mathcal{A}%
_{q}\left(  \rho\right)  \right)  $ for each $\rho$. Using again Proposition
\ref{pAfree}, we obtain that $g_{\rho}\in\mathcal{I}_{xy}^{\left(
\rho\right)  }$ for every $\rho$. Hence $g\in\mathcal{I}_{xy}$. The fact
$x^{i}y^{k}\in\mathcal{I}_{xy}\backslash Q\left(  \mathcal{O}_{q}\left(
\mathbb{C}^{2}\right)  \right)  $ for all $i,k\in\mathbb{N}$ follows from
Proposition \ref{pAfree} as well.
\end{proof}

\begin{remark}
\label{remCXCY}The character space $\mathbb{C}_{xy}$\ of the Fr\'{e}chet
algebra $\mathcal{O}_{q}\left(  \mathbb{C}^{2}\right)  $ is the union
$\mathbb{C}_{x}\cup\mathbb{C}_{y}$ of two copies of the complex plane whose
intersection consist of the trivial character that responds to $\left(
0,0\right)  $. One can swap the role of $x$ and $y$ having $\mathbb{C}_{yx}$
as the character space of $\mathcal{O}_{q^{-1}}\left(  \mathbb{C}^{2}\right)
$. Based on the just proven Proposition \ref{tO}, it is reasonable to use the
notation $\mathcal{O}_{q}\left(  \mathbb{C}_{xy}\right)  $ instead of
$\mathcal{O}_{q}\left(  \mathbb{C}^{2}\right)  $ whenever $\left\vert
q\right\vert \neq1$.
\end{remark}

Below the algebra $\mathcal{O}_{q}\left(  \mathbb{C}_{xy}\right)  $ is
extended up to a reasonable structure presheaf of the noncommutative plane
$\mathbb{C}_{xy}$.

\subsection{The sheaf of stalks of holomorphic functions on $\left(
\mathbb{C},\mathfrak{q}\right)  $ and $\left(  \mathbb{C},\mathfrak{d}\right)
$}

As in Subsection \ref{subsecQT}, we denote by $\mathcal{O}^{\mathfrak{q}}$ and
$\mathcal{O}^{\mathfrak{d}}$ the Fr\'{e}chet sheaf of stalks of the
holomorphic functions on $\left(  \mathbb{C},\mathfrak{q}\right)  $ and
$\left(  \mathbb{C},\mathfrak{d}\right)  $, respectively.

\begin{lemma}
\label{lemQC2}The stalks of the sheaves $\mathcal{O}^{\mathfrak{q}}$ and
$\mathcal{O}^{\mathfrak{d}}$ at zero are the same stalk $\mathcal{O}_{0}$ of
the standard sheaf $\mathcal{O}$ with its residue field $\mathbb{C}%
_{0}=\mathbb{C}$, whereas
\[
\mathcal{O}_{\lambda}^{\mathfrak{q}}=\mathcal{O}\left(  \left\{
\lambda\right\}  _{q}\right)  =\mathcal{O}_{0}+\sum_{n\in\mathbb{Z}_{+}%
}\mathcal{O}_{q^{n}\lambda}\quad\text{and\quad}\mathcal{O}_{\lambda
}^{\mathfrak{d}}=\mathcal{O}\left(  \mathbb{D}_{\left\vert \lambda\right\vert
}\right)
\]
at every $\lambda\in\mathbb{C}\backslash\left\{  0\right\}  $.
\end{lemma}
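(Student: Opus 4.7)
My plan is to compute the stalks directly from the colimit description of the direct image. Recall that if $f : X \to Y$ is continuous and $\mathcal{F}$ is a sheaf on $X$, then $(f_\ast \mathcal{F})_p = \varinjlim_{U \ni p} \mathcal{F}(f^{-1}(U))$ over the open neighborhoods $U$ of $p$ in $Y$. In our situation $f = \operatorname{id}$, so $\mathcal{O}^{\mathfrak{q}}_p = \varinjlim_{U} \mathcal{O}(U)$ where $U$ runs over $\mathfrak{q}$-open neighborhoods of $p$, and similarly for $\mathcal{O}^{\mathfrak{d}}_p$. Thus the whole lemma reduces to identifying cofinal systems of neighborhoods in each of the two topologies.

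For $p = 0$, Lemma \ref{lemQC} already states that the $\mathfrak{q}$-neighborhood filter base and the $\mathfrak{d}$-neighborhood filter base at the origin both coincide with the standard one generated by the disks $B(0,\varepsilon)$. The colimit therefore collapses to the usual stalk $\mathcal{O}_0$ with its residue field $\mathbb{C}$, which handles both sheaves at zero simultaneously.

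For $\lambda \neq 0$ in the $\mathfrak{q}$-topology, I would argue in two steps. First, every $\mathfrak{q}$-open neighborhood $U$ of $\lambda$ automatically contains the compact set $\{\lambda\}_q = \{q^n\lambda : n \in \mathbb{Z}_+\} \cup \{0\}$ since $U = U_q$; conversely, for any standard open $V \supseteq \{\lambda\}_q$ the $q$-hull $V_q$ is a $\mathfrak{q}$-open neighborhood of $\lambda$ contained in an arbitrarily small standard thickening of $\{\lambda\}_q$. Hence the cofinal family of $\mathfrak{q}$-open neighborhoods of $\lambda$ is cofinal with the family of standard open neighborhoods of $\{\lambda\}_q$, so $\mathcal{O}^{\mathfrak{q}}_\lambda = \varinjlim_{V \supseteq \{\lambda\}_q} \mathcal{O}(V) = \mathcal{O}(\{\lambda\}_q)$. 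Second, since $\{\lambda\}_q$ is a convergent sequence together with its limit $0$, any sufficiently small open neighborhood of it has the shape $B(0,\varepsilon) \sqcup \bigsqcup_{n < N(\varepsilon)} B(q^n\lambda, \delta_n)$, where $N(\varepsilon)$ is chosen so that $q^n \lambda \in B(0,\varepsilon)$ for $n \geq N(\varepsilon)$. A holomorphic function on such a neighborhood is an independent tuple consisting of a germ at $0$ and germs at the finitely many isolated points $q^n\lambda$; passing to the colimit as $\varepsilon \to 0$ (so that $N(\varepsilon) \to \infty$) exhibits $\mathcal{O}(\{\lambda\}_q)$ as $\mathcal{O}_0 + \sum_{n \in \mathbb{Z}_+} \mathcal{O}_{q^n\lambda}$, which is the stated description.

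For $\lambda \neq 0$ in the $\mathfrak{d}$-topology, the open sets are (by definition) arbitrary unions of disks $B(0,r)$ centered at the origin, so the only $\mathfrak{d}$-open sets containing $\lambda$ are those unions that include some $B(0,r)$ with $r > |\lambda|$. Consequently $\{B(0,r) : r > |\lambda|\}$ is a cofinal family of $\mathfrak{d}$-neighborhoods of $\lambda$, and
\[
\mathcal{O}^{\mathfrak{d}}_\lambda = \varinjlim_{r > |\lambda|} \mathcal{O}(B(0,r)) = \mathcal{O}(\mathbb{D}_{|\lambda|}),
\]
the algebra of germs of holomorphic functions on the closed disk of radius $|\lambda|$. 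The main point requiring care is the $\mathfrak{q}$-case: one must verify that the $q$-hull construction indeed yields a cofinal system of $\mathfrak{q}$-neighborhoods of $\lambda$ inside arbitrary standard neighborhoods of $\{\lambda\}_q$, and then correctly describe the colimit over shrinking $\varepsilon$ as a sum of a germ at $0$ with countably many germs at the off-origin points of the spiral; the other claims are formal.
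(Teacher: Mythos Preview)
Your argument is correct and mirrors the paper's proof: both compute the stalk as a colimit, use Lemma~\ref{lemQC} at the origin, identify $\mathcal{O}^{\mathfrak{q}}_\lambda$ with $\mathcal{O}(\{\lambda\}_q)$ via a cofinal family of $q$-open neighborhoods of the shape $B(0,\varepsilon)\cup\bigcup_{m=0}^{n}B(q^m\lambda,|q|^m\delta)$, and handle $\mathcal{O}^{\mathfrak{d}}_\lambda$ by the obvious cofinal system of centered disks. One remark: your sentence that ``the $q$-hull $V_q$ is \ldots contained in an arbitrarily small standard thickening of $\{\lambda\}_q$'' is not literally true for an arbitrary open $V\supseteq\{\lambda\}_q$ (since $V_q\supseteq V$, and $V$ could be all of $\mathbb{C}$); what you actually need---and what you in fact supply in the next paragraph---is the explicit choice $V=B(0,\varepsilon)\sqcup\bigsqcup_{m<N}B(q^m\lambda,\delta_m)$ with the radii scaled so that $qV\subseteq V$, which is precisely the paper's $U_{\varepsilon,\delta}$.
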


\begin{proof}
Since the $\mathfrak{q}$-topology, $\mathfrak{d}$-topology and the standard
topology have the same neighborhood filter base of the origin (see Lemma
\ref{lemQC}), the equalities $\mathcal{O}_{0}^{\mathfrak{q}}=\mathcal{O}%
_{0}=\mathcal{O}_{0}^{\mathfrak{d}}$ are immediate.

Take $\lambda\in\mathbb{C}\backslash\left\{  0\right\}  $ and its
$\mathfrak{q}$-neighborhood $U$, which is a $q$-open subset containing
$\lambda$. Then the $q$-hull $\left\{  \lambda\right\}  _{q}$ of $\lambda$ is
contained in $U$. But $B\left(  0,\varepsilon\right)  \subseteq U$ for a small
$\varepsilon>0$, and there are just finitely many of $q^{m}\lambda$ outside of
the disk $B\left(  0,\varepsilon\right)  $, say $\left\{  \lambda
,q\lambda,\ldots q^{n}\lambda\right\}  $. Pick up small disks $B\left(
q^{m}\lambda,\left\vert q\right\vert ^{m}\delta\right)  \subseteq U$, $0\leq
m\leq n$ for $\delta>0$. Then $U_{\varepsilon,\delta}=B\left(  0,\varepsilon
\right)  \cup\bigcup\limits_{m=0}^{n}B\left(  q^{m}\lambda,\left\vert
q\right\vert ^{m}\delta\right)  $ is a $q$-open neighborhood of $\left\{
\lambda\right\}  _{q}$ contained in $U$. Actually, the latter union can be
assumed to be disjoint. In particular, every $s\in\mathcal{O}_{0}%
\oplus\bigoplus\limits_{m\in\mathbb{Z}_{+}}\mathcal{O}_{q^{m}\lambda}$ can be
written as a finite sum $s=\left\langle B\left(  0,\varepsilon\right)
,f_{0}\right\rangle +\sum_{m=0}^{n}\left\langle B\left(  q^{m}\lambda
,\left\vert q\right\vert ^{m}\delta\right)  ,g_{m}\right\rangle $ for some
$f_{0}\in\mathcal{O}\left(  B\left(  0,\varepsilon\right)  \right)  $ and
$g_{m}\in\mathcal{O}\left(  B\left(  q^{m}\lambda,\left\vert q\right\vert
^{m}\delta\right)  \right)  $. Using the standard sheaf property (of
$\mathcal{O}$), we obtain a unique $f\in\mathcal{O}\left(  U_{\varepsilon
,\delta}\right)  $ such that $\left\langle U_{\varepsilon,\delta
},f\right\rangle =\left\langle B\left(  0,\varepsilon\right)  ,f_{0}%
\right\rangle $ and $\left\langle U_{\varepsilon,\delta},f\right\rangle
=\left\langle B\left(  q^{m}\lambda,\left\vert q\right\vert ^{m}\delta\right)
,g_{m}\right\rangle $ for all $m$. It follows that there is a well defined
homomorphism
\[
\mathcal{O}_{0}\oplus\bigoplus\limits_{m\in\mathbb{Z}_{+}}\mathcal{O}%
_{q^{m}\lambda}\rightarrow\mathcal{O}\left(  \left\{  \lambda\right\}
_{q}\right)  ,\quad s=\left\langle B\left(  0,\varepsilon\right)
,f_{0}\right\rangle +\sum_{m=0}^{n}\left\langle B\left(  q^{m}\lambda
,\left\vert q\right\vert ^{m}\delta\right)  ,g_{m}\right\rangle \mapsto
\left\langle U_{\varepsilon,\delta},f\right\rangle ,
\]
where $\mathcal{O}\left(  \left\{  \lambda\right\}  _{q}\right)  $ is the
algebra of all holomorphic stalks on the compact set $\left\{  \lambda
\right\}  _{q}$. Conversely, $\left\langle U,f\right\rangle \in\mathcal{O}%
_{\lambda}^{\mathfrak{q}}$ if and only if $\left\langle U,f\right\rangle
=\left\langle U_{\varepsilon,\delta},f\right\rangle \in\mathcal{O}\left(
\left\{  \lambda\right\}  _{q}\right)  $. In this case, $s=\left\langle
B\left(  0,\varepsilon\right)  ,f\right\rangle +\sum_{m=0}^{n}\left\langle
B\left(  q^{m}\lambda,\left\vert q\right\vert ^{m}\delta\right)
,f\right\rangle $ is representing the stalk $\left\langle U_{\varepsilon
,\delta},f\right\rangle $. Thus $\mathcal{O}_{\lambda}^{\mathfrak{q}%
}=\mathcal{O}\left(  \left\{  \lambda\right\}  _{q}\right)  $ and it is a
quotient of the direct algebraic sum $\mathcal{O}_{0}\oplus\bigoplus
\limits_{m\in\mathbb{Z}_{+}}\mathcal{O}_{q^{m}\lambda}$. In particular,
$\mathcal{O}_{\lambda}^{\mathfrak{q}}=\mathcal{O}_{0}+\sum_{m\in\mathbb{Z}%
_{+}}\mathcal{O}_{q^{m}\lambda}$. Finally, $\mathcal{O}_{\lambda
}^{\mathfrak{d}}=\underrightarrow{\lim}\left\{  \mathcal{O}\left(  B\left(
0,r\right)  \right)  :\left\vert \lambda\right\vert <r\right\}  =\mathcal{O}%
\left(  \mathbb{D}_{\left\vert \lambda\right\vert }\right)  $, that is,
$\mathcal{O}_{\lambda}\subseteq\mathcal{O}_{\lambda}^{\mathfrak{q}%
}=\mathcal{O}\left(  \left\{  \lambda\right\}  _{q}\right)  \subseteq
\mathcal{O}_{\lambda}^{\mathfrak{d}}=\mathcal{O}\left(  \mathbb{D}_{\left\vert
\lambda\right\vert }\right)  $.
\end{proof}

\begin{remark}
The algebra $\mathcal{O}_{\lambda}^{\mathfrak{q}}$ is not local for
$\lambda\in\mathbb{C}\backslash\left\{  0\right\}  $. It has an ideal of those
stalks $\left\langle U,f\right\rangle \in\mathcal{O}_{\lambda}^{\mathfrak{q}}$
with $f\left(  \left\{  \lambda\right\}  _{q}\right)  =\left\{  0\right\}  $.
The related quotient (as a residue "field") equals to $\mathbb{C}_{0}%
+\sum_{n\in\mathbb{Z}_{+}}\mathbb{C}_{q^{n}\lambda}$, where $\mathbb{C}%
_{q^{n}\lambda}=\mathbb{C}$ is the residue field of $\mathcal{O}$ at
$q^{n}\lambda$. The same holds for the algebra $\mathcal{O}_{\lambda
}^{\mathfrak{d}}$.
\end{remark}

The character space $\mathbb{C}_{xy}$ being the union $\mathbb{C}_{x}%
\cup\mathbb{C}_{y}$ can be equipped with the final topology so that both
embeddings
\[
\left(  \mathbb{C}_{x},\mathfrak{q}\right)  \hookrightarrow\mathbb{C}%
_{xy}\hookleftarrow\left(  \mathbb{C}_{y},\mathfrak{d}\right)
\]
are continuous, which is called the $\left(  \mathfrak{q},\mathfrak{d}\right)
$\textit{-topology of} $\mathbb{C}_{xy}$. The topology base in $\mathbb{C}%
_{xy}$ consists of all open subsets $U=U_{x}\cup U_{y}$ with $q$-open set
$U_{x}\subseteq\mathbb{C}_{x}$ and a disk $U_{y}=B\left(  0,r_{y}\right)
\subseteq\mathbb{C}_{y}$. In this case, $\mathbb{C}_{xy}=\mathbb{C}_{x}%
\cup\mathbb{C}_{y}$ is the union of two irreducible components, whose
intersection is a unique generic point (see Lemma \ref{lemQC}).

Let us consider the subsheaf $\mathcal{I}_{\mathfrak{q}}$ of $\mathcal{O}%
^{\mathfrak{q}}$ on $\mathbb{C}_{x}$ whose stalks vanish at the origin. Notice
that $\mathcal{I}_{\mathfrak{q}}^{+}=\mathcal{O}^{\mathfrak{q}}$ on
$\mathbb{C}_{x}$, where $\mathcal{I}_{\mathfrak{q}}^{+}$ is the unitization of
the sheaf $\mathcal{I}_{\mathfrak{q}}$. In a similar way, we have
$\mathcal{I}_{\mathfrak{d}}^{+}=\mathcal{O}^{\mathfrak{d}}$ on $\mathbb{C}%
_{y}$. Thus there are ringed spaces (see \cite[2.2]{Harts}) $\left(
\mathbb{C}_{x},\mathcal{I}_{\mathfrak{q}}\right)  $ and $\left(
\mathbb{C}_{y},\mathcal{I}_{\mathfrak{d}}\right)  $, whose unitizations are
reduced to $\left(  \mathbb{C}_{x},\mathcal{O}^{\mathfrak{q}}\right)  $ and
$\left(  \mathbb{C}_{y},\mathcal{O}^{\mathfrak{d}}\right)  $, respectively.

\subsection{The structure presheaf of $\mathbb{C}_{xy}$\label{subsecCXY}}

Fix $\left\vert q\right\vert <1$. We define the ringed space $\left(
\mathbb{C}_{xy},\mathcal{O}_{q}\right)  $ to be the unital projective tensor
product of the ringed spaces $\left(  \mathbb{C}_{x},\mathcal{I}%
_{\mathfrak{q}}\right)  $ and $\left(  \mathbb{C}_{y},\mathcal{I}%
_{\mathfrak{d}}\right)  $ in the following way. The underline space
$\mathbb{C}_{xy}$ is equipped with the $\left(  \mathfrak{q},\mathfrak{d}%
\right)  $\textit{-}topology and its structure presheaf is defined as
\[
\mathcal{O}_{q}=\mathcal{I}_{\mathfrak{q}}^{+}\widehat{\otimes}\mathcal{I}%
_{\mathfrak{d}}^{+}=\left(  \mathcal{O}^{\mathfrak{q}}|\mathbb{C}_{x}\right)
\widehat{\otimes}\left(  \mathcal{O}^{\mathfrak{d}}|\mathbb{C}_{y}\right)  .
\]
Thus $\mathcal{O}_{q}\left(  U\right)  =\mathcal{O}^{\mathfrak{q}}\left(
U_{x}\right)  \widehat{\otimes}\mathcal{O}^{\mathfrak{d}}\left(  U_{y}\right)
$ for every $\left(  \mathfrak{q},\mathfrak{d}\right)  $\textit{-}open subset
$U=U_{x}\cup U_{y}\subseteq\mathbb{C}_{xy}$. We assume that $\mathcal{O}%
_{q}\left(  \varnothing\right)  =\left\{  0\right\}  $, and use the same
notation for the presheaf and the associated sheaf. Notice that
\begin{align*}
\mathcal{O}_{q}\left(  U_{x}\right)   &  =\mathcal{O}^{\mathfrak{q}}\left(
U_{x}\right)  \widehat{\otimes}\mathcal{I}_{\mathfrak{d}}^{+}\left(
\varnothing\right)  =\mathcal{O}^{\mathfrak{q}}\left(  U_{x}\right)
\widehat{\otimes}\mathbb{C=}\mathcal{O}^{\mathfrak{q}}\left(  U_{x}\right)
,\text{ }\\
\mathcal{O}_{q}\left(  U_{y}\right)   &  =\mathcal{I}_{\mathfrak{q}}%
^{+}\left(  \varnothing\right)  \widehat{\otimes}\mathcal{I}_{\mathfrak{d}%
}^{+}\left(  U_{y}\right)  =\mathbb{C}\widehat{\otimes}\mathcal{O}%
^{\mathfrak{d}}\left(  U_{y}\right)  =\mathcal{O}^{\mathfrak{d}}\left(
U_{y}\right)  ,
\end{align*}
which means that $\mathcal{O}_{q}|\mathbb{C}_{x}=\mathcal{O}^{\mathfrak{q}%
}|\mathbb{C}_{x}$ and $\mathcal{O}_{q}|\mathbb{C}_{y}=\mathcal{O}%
^{\mathfrak{d}}|\mathbb{C}_{y}$. In particular, using Lemma \ref{lemQC2}, we
deduce that
\[
\left(  \mathcal{O}_{q}\right)  _{\lambda}=\mathcal{O}_{\lambda}%
^{\mathfrak{q}}=\mathcal{O}_{0}\oplus\bigoplus\limits_{n\in\mathbb{Z}_{+}%
}\mathcal{O}_{q^{n}\lambda}\text{\quad and\quad}\left(  \mathcal{O}%
_{q}\right)  _{\mu}=\mathcal{O}_{\mu}^{\mathfrak{d}}=\mathcal{O}\left(
\mathbb{D}_{\left\vert \mu\right\vert }\right)
\]
for every $\lambda\in\mathbb{C}_{x}\backslash\left\{  0\right\}  $ and $\mu
\in\mathbb{C}_{y}\backslash\left\{  0\right\}  $. But
\[
\left(  \mathcal{O}_{q}\right)  _{0}=\underrightarrow{\lim}\left\{
\mathcal{O}_{q}\left(  U\right)  \right\}  =\underrightarrow{\lim}\left\{
\mathcal{O}\left(  U_{x}\right)  \widehat{\otimes}\mathcal{O}\left(
U_{y}\right)  \right\}  =\underrightarrow{\lim}\left\{  \mathcal{O}\left(
U_{x}\times U_{y}\right)  \right\}
\]
(see \cite[Ch. II, 4.15]{HelHom} ), where $U$ is running over all $\left(
\mathfrak{q},\mathfrak{d}\right)  $\textit{-}open subset $U\subseteq
\mathbb{C}_{xy}$.

\begin{lemma}
\label{lemQC3}If $U=U_{x}\cup U_{y}\subseteq\mathbb{C}_{xy}$ is a $\left(
\mathfrak{q},\mathfrak{d}\right)  $\textit{-}open subset of $\mathbb{C}_{xy}$
with $U_{y}=B\left(  0,r_{y}\right)  $, then
\[
\mathcal{O}_{q}\left(  U\right)  =\left\{  f=\sum_{n}f_{n}\left(  x\right)
y^{n}\in\mathcal{O}\left(  U_{x}\right)  \left[  \left[  y\right]  \right]
:f_{n}\in\mathcal{O}\left(  U_{x}\right)  ,p_{K,\rho}\left(  f\right)
<\infty,K\subseteq U_{x},\rho<r_{y}\right\}
\]
as the Fr\'{e}chet spaces, where $p_{K,\rho}\left(  f\right)  =\sum
_{n}\left\Vert f_{n}\right\Vert _{K}\rho^{n}$ and $K\subseteq U_{x}$ is a
quasicompact subset. The family $\left\{  p_{K,\rho}:K\subseteq U_{x}%
,0<\rho<r_{y}\right\}  $ of seminorms defines the Fr\'{e}chet topology of
$\mathcal{O}_{q}\left(  U\right)  $, and
\[
\mathcal{O}_{q}\left(  U\right)  =\underleftarrow{\lim}\left\{  \mathcal{A}%
\left(  K\right)  \widehat{\otimes}\mathcal{A}\left(  \rho\right)  :K\subseteq
U_{x},\rho<r_{y}\right\}  .
\]

\end{lemma}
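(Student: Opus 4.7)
The plan is to unpack the definition $\mathcal{O}_{q}(U)=\mathcal{O}^{\mathfrak{q}}(U_{x})\widehat{\otimes}\mathcal{O}^{\mathfrak{d}}(U_{y})$ and reduce everything to Lemma \ref{lemAK0}. First I would invoke the discussion just preceding Subsection \ref{subsecAKR}: the Fr\'{e}chet topology on $\mathcal{O}^{\mathfrak{q}}(U_{x})$ is generated by the seminorms $\|\cdot\|_{K}$ over $q$-quasicompact $K\subseteq U_{x}$, and by Lemma \ref{lemQC} this family generates the standard Fr\'{e}chet topology of $\mathcal{O}(U_{x})$, so $\mathcal{O}^{\mathfrak{q}}(U_{x})=\mathcal{O}(U_{x})$ as Fr\'{e}chet $\widehat{\otimes}$-algebras. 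Since $U_{y}=B(0,r_{y})$ is a disk centered at the origin, $\mathcal{O}^{\mathfrak{d}}(U_{y})=\mathcal{O}(B(0,r_{y}))$ as noted in the same subsection. Hence $\mathcal{O}_{q}(U)=\mathcal{O}(U_{x})\widehat{\otimes}\mathcal{O}(B(0,r_{y}))$, and Lemma \ref{lemAK0} immediately yields the last equality of the statement,
\[
\mathcal{O}_{q}(U)=\underleftarrow{\lim}\{\mathcal{A}(K)\widehat{\otimes}\mathcal{A}(\rho):K\subseteq U_{x},\rho<r_{y}\}.
\]

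Next I would extract the power series representation by invoking the absolute basis $\{z^{n}\}$ in each $\mathcal{A}(\rho)$ (which was the key device in the proof of Lemma \ref{lemAK0}). Given $f\in\mathcal{O}_{q}(U)$, viewed as a compatible family $(f_{K,\rho})$, every $f_{K,\rho}$ has a unique absolutely convergent expansion $f_{K,\rho}=\sum_{n}f_{K,n}\otimes z^{n}$ in $\mathcal{A}(K)\widehat{\otimes}\mathcal{A}(\rho)$. Compatibility in $K$ forces $f_{n}:=(f_{K,n})_{K}\in\underleftarrow{\lim}_{K}\mathcal{A}(K)=\mathcal{O}(U_{x})$ by Lemma \ref{lemAK0}, and rewriting the basis element as $y^{n}$ gives the required unique expansion $f=\sum_{n}f_{n}(x)y^{n}$. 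The projective tensor seminorm $\|\cdot\|_{K}\otimes\|\cdot\|_{\rho}$ evaluated on this expansion equals $p_{K,\rho}(f)=\sum_{n}\|f_{n}\|_{K}\rho^{n}$, which is therefore finite for every quasicompact $K\subseteq U_{x}$ and $\rho<r_{y}$. Conversely, any formal series $\sum_{n}f_{n}(x)y^{n}$ with $f_{n}\in\mathcal{O}(U_{x})$ and all $p_{K,\rho}$ finite defines, via partial sums, an absolutely convergent element of each $\mathcal{A}(K)\widehat{\otimes}\mathcal{A}(\rho)$ in a compatible way, and hence an element of the inverse limit.

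Finally, the statement about the topology is automatic: the seminorms $\{p_{K,\rho}\}$ are by construction the projective tensor product seminorms on $\mathcal{O}(U_{x})\widehat{\otimes}\mathcal{O}(B(0,r_{y}))$ corresponding to the defining seminorms $\|\cdot\|_{K}$ on $\mathcal{O}(U_{x})$ and $\|\cdot\|_{\rho}$ on $\mathcal{A}(\rho)$, so they generate the Fr\'{e}chet topology of $\mathcal{O}_{q}(U)$. The main (and only) subtlety is the passage from the absolute basis expansion in each finite stage $\mathcal{A}(K)\widehat{\otimes}\mathcal{A}(\rho)$ to a single coefficient sequence $(f_{n})$ with $f_{n}\in\mathcal{O}(U_{x})$; but this step has already been carried out inside the proof of Lemma \ref{lemAK0}, so the present lemma is essentially a repackaging of that result with $y$ playing the role of $z$ and with $(U_{x},\mathfrak{q})$ substituted for the standard complex plane.
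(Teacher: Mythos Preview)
Your proposal is correct and follows essentially the same route as the paper: reduce $\mathcal{O}_{q}(U)$ to $\mathcal{O}(U_{x})\widehat{\otimes}\mathcal{O}(B(0,r_{y}))$, invoke Lemma~\ref{lemAK0} for the inverse-limit description, and use the absolute basis $\{z^{n}\}$ in $\mathcal{A}(\rho)$ to extract the coefficient functions $f_{n}\in\mathcal{O}(U_{x})$. The only stylistic difference is that the paper writes down explicit mutually inverse continuous maps $\alpha:\mathcal{F}\to\mathcal{O}(U_{x})\widehat{\otimes}\mathcal{O}(U_{y})$ and $\beta$ in the other direction (needing only the inequalities $(\|\cdot\|_{K}\otimes\|\cdot\|_{\rho})(\alpha f)\le p_{K,\rho}(f)$ and $p_{K,\rho}(\beta(g\otimes h))\le\|g\|_{K}\|h\|_{\rho}$), whereas you assert directly that $p_{K,\rho}$ \emph{equals} the projective tensor seminorm $\|\cdot\|_{K}\otimes\|\cdot\|_{\rho}$; this equality is true because $\mathcal{A}(\rho)$ is isometrically a weighted $\ell^{1}$-space, but you should say so in one line rather than leave it as ``by construction.''
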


\begin{proof}
By Lemma \ref{lemAK0}, $\left\{  \left\Vert \cdot\right\Vert _{K}%
\otimes\left\Vert \cdot\right\Vert _{\rho}:K\subseteq U_{x},\rho
<r_{y}\right\}  $ is a defining family of seminorms of the projective tensor
product $\mathcal{O}^{\mathfrak{q}}\left(  U_{x}\right)  \widehat{\otimes
}\mathcal{O}^{\mathfrak{d}}\left(  U_{y}\right)  $, and the equality
$\mathcal{O}_{q}\left(  U\right)  =\underleftarrow{\lim}\left\{
\mathcal{A}\left(  K\right)  \widehat{\otimes}\mathcal{A}\left(  \rho\right)
\right\}  $ holds up to a topological isomorphism.

Now let $\mathcal{F}$ be the polynormed space of all absolutely convergent
series $f=\sum_{n}f_{n}\left(  x\right)  y^{n}\in\mathcal{O}\left(
U_{x}\right)  \left[  \left[  y\right]  \right]  $ with $p_{K,\rho}\left(
f\right)  <\infty$, $K\subseteq U_{x}$, $\rho<r_{y}$. Since $\left(
\left\Vert \cdot\right\Vert _{K}\otimes\left\Vert \cdot\right\Vert _{\rho
}\right)  \left(  f_{n}\otimes z^{n}\right)  =\left\Vert f_{n}\right\Vert
_{K}\rho^{n}$ (see \cite[Ch. II, 1.16]{HelHom}), it follows that the series
$\sum_{n}f_{n}\otimes z^{n}$ converges absolutely in $\mathcal{O}\left(
U_{x}\right)  \widehat{\otimes}\mathcal{O}\left(  U_{y}\right)  $ once
$f\in\mathcal{F}$. It follows that the linear map%
\[
\alpha:\mathcal{F\longrightarrow O}\left(  U_{x}\right)  \widehat{\otimes
}\mathcal{O}\left(  U_{y}\right)  \text{, }\alpha\left(  f\right)  =\sum
_{n}f_{n}\otimes z^{n}%
\]
is continuous, namely, $\left(  \left\Vert \cdot\right\Vert _{K}%
\otimes\left\Vert \cdot\right\Vert _{\rho}\right)  \left(  \alpha\left(
f\right)  \right)  \leq\sum_{n}\left\Vert f_{n}\right\Vert _{K}\rho
^{n}=p_{K,\rho}\left(  f\right)  $ for all $K$ and $\rho<r_{y}$. Further, the
bilinear mapping
\[
\mathcal{O}\left(  U_{x}\right)  \times\mathcal{O}\left(  U_{y}\right)
\longrightarrow\mathcal{F},\quad\left(  g,\sum_{n}a_{n}z^{n}\right)
\mapsto\sum_{n}a_{n}g\left(  x\right)  y^{n}%
\]
is jointly continuous, for $p_{K,\rho}\left(  \sum_{n}a_{n}g\left(  x\right)
y^{n}\right)  =\sum_{n}\left\vert a_{n}\right\vert \left\Vert g\right\Vert
_{K}\rho^{n}=\left\Vert g\right\Vert _{K}\left\Vert \sum_{n}a_{n}%
z^{n}\right\Vert _{\rho}$ (see Lemma \ref{lemAK0}). Hence there is a unique
continuous linear map $\beta:\mathcal{O}\left(  U_{x}\right)  \widehat{\otimes
}\mathcal{O}\left(  U_{y}\right)  \rightarrow\mathcal{F}$ such that
$\beta\left(  g\otimes z^{n}\right)  =g\left(  x\right)  y^{n}$. Moreover,
\[
\left(  \beta\alpha\right)  \left(  f\right)  =\beta\left(  \sum_{n}%
f_{n}\otimes z^{n}\right)  =\sum_{n}\beta\left(  f_{n}\otimes z^{n}\right)
=\sum_{n}f_{n}\left(  x\right)  y^{n}=f
\]
for all $f\in\mathcal{F}$. Conversely, for every $u\in\mathcal{O}\left(
U_{x}\right)  \widehat{\otimes}\mathcal{O}\left(  U_{y}\right)  $ we have
$u=\sum_{i=1}^{\infty}f_{i}\otimes e_{i}$ to be an absolutely convergent
series (see \cite[Ch. III, 6.4]{Sch}) with $e_{i}=\sum_{n}a_{in}z^{n}%
\in\mathcal{O}\left(  U_{y}\right)  $. Then
\begin{align*}
\left(  \alpha\beta\right)  \left(  u\right)   &  =\alpha\left(  \sum
_{i=1}^{\infty}f_{i}\left(  x\right)  \sum_{n}a_{in}y^{n}\right)
=\alpha\left(  \sum_{n}\sum_{i}a_{in}f_{i}\left(  x\right)  y^{n}\right)
=\sum_{n}\left(  \sum_{i}a_{in}f_{i}\right)  \otimes z^{n}\\
&  =\sum_{i}f_{i}\otimes\left(  \sum_{n}a_{in}z^{n}\right)  =\sum_{i}%
f_{i}\otimes e_{i}=u,
\end{align*}
which means that $\alpha$ implements a topological isomorphism with its
inverse map $\beta$. One can use the completeness argument of $\mathcal{F}$ to
make the proof a bit shorter, but the present form clarifies the maps for the
both directions.
\end{proof}

Thus the Fr\'{e}chet (pre)sheaf $\mathcal{O}_{q}$ can be identified with a
subsheaf of $\mathcal{O}^{\mathfrak{q}}\left[  \left[  y\right]  \right]  $
due to Lemma \ref{lemQC3}. One can swap the topology order in $\mathbb{C}%
_{x}\cup\mathbb{C}_{y}$ by endowing it with the $\left(  \mathfrak{d}%
,\mathfrak{q}\right)  $\textit{-topology. }In this case we use the notation
$\mathbb{C}_{xy}^{\operatorname{op}}$ instead of $\mathbb{C}_{xy}$, and we
come up with the related sheaf $\mathcal{O}_{q}^{\operatorname{op}}$, which is
identified with a subsheaf of $\mathcal{O}^{\mathfrak{q}}\left[  \left[
x\right]  \right]  $. Namely, every $f\in\mathcal{O}_{q}^{\operatorname{op}%
}\left(  U\right)  $ can be written in the form $f=\sum_{n}x^{n}f_{n}\left(
y\right)  $ with the same family of defining seminorms.

Now we define a noncommutative multiplication over the sections of the sheaf
$\mathcal{O}_{q}$. It suffices to define it over the related presheaf, that
is, over the space $\mathcal{O}_{q}\left(  U\right)  $, where $U\subseteq
\mathbb{C}_{xy}$ is a $\left(  \mathfrak{q},\mathfrak{d}\right)  $-open
subset. Pick $f,g\in\mathcal{O}\left(  U_{x}\right)  \left[  \left[  y\right]
\right]  $ and let us define the following formal $q$-multiplication%
\[
f\cdot g=\sum_{n}\left(  \sum_{i+j=n}f_{i}\left(  x\right)  g_{j}\left(
q^{i}x\right)  \right)  y^{n}.
\]
Notice that $\left\{  q^{i}x:i\in\mathbb{Z}_{+}\right\}  \cup\left\{
0\right\}  =\left\{  x\right\}  _{q}\subseteq U_{x}$ whenever $x\in U_{x}$,
and $f_{i}\left(  x\right)  g_{j}\left(  q^{i}x\right)  $ is the standard
multiplication from the commutative algebra $\mathcal{O}\left(  U_{x}\right)
$. One can easily verify that this is an associative multiplication.

\begin{lemma}
\label{lemQC4}Let $U\subseteq\mathbb{C}_{xy}$ be a $\left(  \mathfrak{q}%
,\mathfrak{d}\right)  $\textit{-}open subset. Then the Fr\'{e}chet space
$\mathcal{O}_{q}\left(  U\right)  $ equipped with the formal $q$%
-multiplication turns out to be a Fr\'{e}chet $\widehat{\otimes}$-algebra, and
the defining family $\left\{  p_{K,\rho}\right\}  $ of seminorms on
$\mathcal{O}_{q}\left(  U\right)  $ are multiplicative whenever $K$ is running
over all $q$-compact subsets of $U_{x}$, and $\rho<r_{y}$. Moreover,
$\mathcal{O}_{q}$ is a presheaf of Fr\'{e}chet $\widehat{\otimes}$-algebras on
$\mathbb{C}_{xy}$.
\end{lemma}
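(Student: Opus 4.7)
The plan is to derive everything from a single submultiplicativity estimate for the seminorms $p_{K,\rho}$, exploiting that $K$ is $q$-compact (i.e.\ $K=K_q$). Once this is in hand, the Fr\'{e}chet algebra structure follows from the completeness of $\mathcal{O}_q(U)$ recorded in Lemma \ref{lemQC3}, while the presheaf property is a direct consequence of the invariance of the formal $q$-multiplication under restriction to $q$-open subsets.

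Concretely, I would pick $f=\sum_i f_i(x)y^i$ and $g=\sum_j g_j(x)y^j$ in $\mathcal{O}_q(U)$, a $q$-compact $K\subseteq U_x$ and $\rho<r_y$. The $n$-th coefficient of $f\cdot g$ is the \emph{finite} sum $h_n(x)=\sum_{i+j=n} f_i(x)g_j(q^i x)\in\mathcal{O}(U_x)$, so no pointwise convergence issue arises inside the coefficient. The essential observation is that $K$ is $q$-spiraling, so $q^i K\subseteq K$ for every $i\geq 0$, and hence $\|g_j(q^i\,\cdot)\|_K=\sup_{x\in K}|g_j(q^i x)|\leq\|g_j\|_K$. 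Using this one computes
\begin{align*}
p_{K,\rho}(f\cdot g)
&=\sum_n\|h_n\|_K\rho^n
\leq\sum_n\sum_{i+j=n}\|f_i\|_K\|g_j\|_K\rho^{i+j}\\
&=\Bigl(\sum_i\|f_i\|_K\rho^i\Bigr)\Bigl(\sum_j\|g_j\|_K\rho^j\Bigr)
=p_{K,\rho}(f)\,p_{K,\rho}(g).
\end{align*}
This simultaneously shows that $f\cdot g$ actually belongs to $\mathcal{O}_q(U)$ (so the $q$-multiplication is everywhere defined) and that $p_{K,\rho}$ is submultiplicative. Because the $p_{K,\rho}$ are precisely the cross seminorms $\|\cdot\|_K\otimes\|\cdot\|_\rho$ on $\mathcal{O}^{\mathfrak{q}}(U_x)\widehat{\otimes}\mathcal{O}^{\mathfrak{d}}(U_y)$, the Fr\'{e}chet topology is defined by a family of multiplicative seminorms, and combining with completeness from Lemma \ref{lemQC3} yields an Arens--Michael Fr\'{e}chet $\widehat{\otimes}$-algebra. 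Associativity, though claimed in the statement, I would also verify: both orderings of a triple product equal $\sum_n\sum_{i+j+k=n} f_i(x)g_j(q^i x)h_k(q^{i+j}x)y^n$, an elementary check using $q^i(q^j x)=q^{i+j}x$.

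For the presheaf property, let $V\subseteq U$ be a $(\mathfrak{q},\mathfrak{d})$-open subset with $V_x\subseteq U_x$ a $q$-open subset. The restriction map acts coefficientwise by $f=\sum_n f_n(x)y^n\mapsto\sum_n(f_n|_{V_x})y^n$; since $V_x$ is itself $q$-spiraling, $q^i x\in V_x$ whenever $x\in V_x$, so evaluation of $g_j$ at $q^i x$ commutes with restriction. Therefore the formal $q$-multiplication is preserved, every restriction is a continuous algebra homomorphism, and $\mathcal{O}_q$ is a presheaf of Fr\'{e}chet $\widehat{\otimes}$-algebras. The one delicate point — and arguably the key of the proof — is precisely the use of the $q$-spiraling condition $q^i K\subseteq K$: without it one only gets $\|g_j(q^i\,\cdot)\|_K=\|g_j\|_{q^i K}$ with $q^i K$ possibly lying outside $K$, so submultiplicativity would fail. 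Lemma \ref{lemQC} guarantees that the $q$-compact subsets of $U_x$ are cofinal among quasicompact subsets, so restricting the defining family to them loses no information on the Fr\'{e}chet topology.
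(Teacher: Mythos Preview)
Your proof is correct and follows essentially the same approach as the paper's own argument: the same submultiplicativity computation for $p_{K,\rho}$ using $q$-compactness of $K$, and the same coefficientwise check that restriction is an algebra homomorphism. If anything, you make explicit the step the paper leaves tacit, namely that $q^iK\subseteq K$ is precisely what gives $\|g_j(q^i\,\cdot)\|_K\leq\|g_j\|_K$, and you invoke Lemma~\ref{lemQC} to justify cofinality of the $q$-compact subsets among all quasicompact ones.
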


\begin{proof}
Using Lemma \ref{lemQC} and Lemma \ref{lemQC3}, we conclude that the family
$\left\{  p_{K,\rho}\right\}  $ over all $q$-compact subsets $K\subseteq
U_{x}$ and $\rho<r_{y}$ is a defining family of seminorms of the Fr\'{e}chet
space $\mathcal{O}_{q}\left(  U\right)  $. Pick a $q$-compact subset
$K\subseteq U_{x}$, $\rho<r_{y}$, and $f,g\in\mathcal{O}_{q}\left(  U\right)
$. Then
\[
p_{K,\rho}\left(  f\cdot g\right)  \leq\sum_{n}\left(  \sum_{i+j=n}\left\Vert
f_{i}\right\Vert _{K}\left\Vert g_{j}\right\Vert _{K}\right)  \rho^{n}=\left(
\sum_{i}\left\Vert f_{i}\right\Vert _{K}\rho^{i}\right)  \left(  \sum
_{j}\left\Vert g_{j}\right\Vert _{K}\rho^{j}\right)  =p_{K,\rho}\left(
f\right)  p_{K,\rho}\left(  g\right)  ,
\]
that is, $f\cdot g\in\mathcal{O}_{q}\left(  U_{x}\right)  $ and $p_{K,\rho}$
is a multiplicative seminorm. Hence $\mathcal{O}_{q}\left(  U\right)  $ is a
Fr\'{e}chet $\widehat{\otimes}$-algebra. Finally, if $V\subseteq
U\subseteq\mathbb{C}_{xy}$ are $\left(  \mathfrak{q},\mathfrak{d}\right)
$\textit{-}open subsets and $f,g\in\mathcal{O}_{q}\left(  U\right)  $, then
\[
\left(  f\cdot g\right)  |_{V}=\sum_{n}\left(  \sum_{i+j=n}f_{i}\left(
x\right)  g_{j}\left(  q^{i}x\right)  \right)  |_{V_{x}}y^{n}=\sum_{n}\left(
\sum_{i+j=n}f_{i}|_{V_{x}}\left(  x\right)  g_{j}|_{V_{x}}\left(
q^{i}x\right)  \right)  y^{n}=\left(  f|_{V}\right)  \cdot\left(
g|_{V}\right)  .
\]
It means that the restriction map is a continuous homomorphism of the
Fr\'{e}chet $\widehat{\otimes}$-algebras.
\end{proof}

In the case of $U_{x}=B\left(  0,r_{x}\right)  $ one can replace the family
$\left\{  \left\Vert \cdot\right\Vert _{K}:K\subseteq U_{x}\right\}  $ of
seminorms $\mathcal{O}\left(  U_{x}\right)  $ by $\left\{  \left\Vert
\cdot\right\Vert _{\rho}:\rho<r_{x}\right\}  $ thanks to Lemma \ref{lemAK0}.
In this case, $p_{K,\rho}$ is replaced by
\[
p_{\rho}\left(  f\right)  =\sum_{n}\left\Vert f_{n}\right\Vert _{\rho_{x}}%
\rho_{y}^{n}\text{ with }\rho=\left(  \rho_{x},\rho_{y}\right)  <\left(
r_{x},r_{y}\right)  =r.
\]
Thus $\left\{  p_{\rho}:\rho<r\right\}  $ turns out to be a defining family of
multiplicative seminorms on $\mathcal{O}_{q}\left(  U\right)  $. In
particular, for the algebra of all global sections of the presheaf
$\mathcal{O}_{q}$ we obtain that
\[
\Gamma\left(  \mathbb{C}_{xy},\mathcal{O}_{q}\right)  =\mathcal{O}_{q}\left(
\mathbb{C}_{xy}\right)  =\mathcal{O}_{q}\left(  \mathbb{C}^{2}\right)
\]
is the same Fr\'{e}chet algebra considered above in Subsection \ref{sAME}.

\begin{corollary}
The stalk of the sheaf $\mathcal{O}_{q}$ at the origin admits the following
description%
\[
\left(  \mathcal{O}_{q}\right)  _{0}=\left\{  \left\langle U_{\varepsilon
},\sum_{n}f_{n}\left(  x\right)  y^{n}\right\rangle :f_{n}\in\mathcal{O}%
\left(  B\left(  0,\varepsilon_{x}\right)  \right)  ,\sum_{n}\left\Vert
f_{n}\right\Vert _{\varepsilon_{x}}\varepsilon_{y}^{n}<\infty,\varepsilon
>0\right\}  ,
\]
where $U_{\varepsilon}=B\left(  0,\varepsilon_{x}\right)  \cup B\left(
0,\varepsilon_{y}\right)  $, $\varepsilon=\left(  \varepsilon_{x}%
,\varepsilon_{y}\right)  $.
\end{corollary}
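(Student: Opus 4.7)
\medskip

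\noindent\textbf{Proof plan.}
By the definition of the stalk of a presheaf,
\[
(\mathcal{O}_q)_0 \;=\; \underrightarrow{\lim}\bigl\{\mathcal{O}_q(U): U \text{ a }(\mathfrak{q},\mathfrak{d})\text{-open neighborhood of }0\bigr\}.
\]
The first step is to show that the family $\{U_{\varepsilon}\}$, with $U_\varepsilon = B(0,\varepsilon_x)\cup B(0,\varepsilon_y)$ and $\varepsilon=(\varepsilon_x,\varepsilon_y)$ ranging over pairs of positive reals, is cofinal in this filter. By Lemma \ref{lemQC} the $\mathfrak{q}$-neighborhood filter base of $0\in\mathbb{C}_x$ coincides with the standard one (i.e.\ the open disks $B(0,\varepsilon_x)$), and by definition of the disk topology the $\mathfrak{d}$-neighborhood base of $0\in\mathbb{C}_y$ is $\{B(0,\varepsilon_y):\varepsilon_y>0\}$. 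Since $\mathbb{C}_{xy}$ carries the final topology with respect to the two embeddings $(\mathbb{C}_x,\mathfrak{q})\hookrightarrow\mathbb{C}_{xy}\hookleftarrow(\mathbb{C}_y,\mathfrak{d})$, every $(\mathfrak{q},\mathfrak{d})$-open neighborhood of $0$ contains some $U_\varepsilon$. Consequently $(\mathcal{O}_q)_0=\underrightarrow{\lim}_{\varepsilon}\mathcal{O}_q(U_\varepsilon)$.

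The second step is to apply Lemma \ref{lemQC3} to each $U_{\varepsilon}$: since $U_{\varepsilon,x}=B(0,\varepsilon_x)$ and $U_{\varepsilon,y}=B(0,\varepsilon_y)$, the algebra $\mathcal{O}_q(U_\varepsilon)$ consists exactly of the formal series $f=\sum_n f_n(x)y^n$ with $f_n\in\mathcal{O}(B(0,\varepsilon_x))$ such that $p_{K,\rho}(f)=\sum_n\|f_n\|_K\rho^n<\infty$ for every quasicompact $K\subseteq B(0,\varepsilon_x)$ and every $\rho<\varepsilon_y$. Thus $(\mathcal{O}_q)_0$ is identified with the set of equivalence classes $\langle U_\varepsilon,f\rangle$ of such series.

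The third step is to match this description with the one stated in the corollary. Let $R$ denote the right-hand side of the displayed equality. If $\langle U_\varepsilon,\sum_n f_n(x)y^n\rangle\in R$, then for any quasicompact $K\subseteq B(0,\varepsilon_x)$ the second part of Lemma \ref{lemQC} gives $K\subseteq\mathbb{D}_{\varepsilon_x}$, so $\|f_n\|_K\le\|f_n\|_{\varepsilon_x}$; together with $\rho<\varepsilon_y$ this yields
\[
p_{K,\rho}(f)\;\le\;\sum_{n}\|f_n\|_{\varepsilon_x}\rho^n\;\le\;\sum_{n}\|f_n\|_{\varepsilon_x}\varepsilon_y^n\;<\;\infty,
\]
so $f\in\mathcal{O}_q(U_\varepsilon)$ and the germ $\langle U_\varepsilon,f\rangle$ lies in $(\mathcal{O}_q)_0$. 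Conversely, any germ has a representative $f\in\mathcal{O}_q(U_\varepsilon)$; picking any $\varepsilon'=(\varepsilon'_x,\varepsilon'_y)$ with $\varepsilon'_x<\varepsilon_x$ and $\varepsilon'_y<\varepsilon_y$, the closed disk $\mathbb{D}_{\varepsilon'_x}$ is quasicompact in $B(0,\varepsilon_x)$ by Lemma \ref{lemQC}, hence
\[
\sum_{n}\|f_n\|_{\varepsilon'_x}(\varepsilon'_y)^n\;=\;p_{\mathbb{D}_{\varepsilon'_x},\,\varepsilon'_y}(f)\;<\;\infty,
\]
and the restriction $f|_{U_{\varepsilon'}}$ provides a representative of the same germ lying in $R$. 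This produces the claimed bijection.

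The only subtlety is the quantifier exchange between the Lemma \ref{lemQC3} description (all quasicompact $K$ and all $\rho<\varepsilon_y$) and the single-parameter description in $R$ (one norm $\|\cdot\|_{\varepsilon_x}$ and one radius $\varepsilon_y$); this is neutralized by the direct limit, since every germ tolerates shrinking $\varepsilon$, and the rest is routine domination of seminorms.
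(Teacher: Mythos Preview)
Your proof is correct and follows essentially the same route as the paper, which merely cites Lemma~\ref{lemQC3} and Lemma~\ref{lemQC4}; you simply unpack what those references mean, supplying the cofinality of the $U_\varepsilon$ (via Lemma~\ref{lemQC}) and the shrinking argument that trades the ``all $K,\rho$'' condition of Lemma~\ref{lemQC3} for the single-parameter condition in the statement. The only cosmetic difference is that you invoke Lemma~\ref{lemQC} rather than Lemma~\ref{lemQC4} for the neighborhood base at $0$, which is in fact the more directly relevant lemma for the cofinality step.
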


\begin{proof}
One needs to use Lemma \ref{lemQC3} and Lemma \ref{lemQC4}.
\end{proof}

By \textit{a noncommutative }$q$-\textit{plane} we mean the ringed space
$\left(  \mathbb{C}_{xy},\mathcal{O}_{q}\right)  $. We have also its opposite
plane $\left(  \mathbb{C}_{xy}^{\operatorname{op}},\mathcal{O}_{q}%
^{\operatorname{op}}\right)  $. A similar result takes place for
$\mathcal{O}_{q}^{\operatorname{op}}\left(  U\right)  $ whenever
$U\subseteq\mathbb{C}_{xy}^{\operatorname{op}}$ is a $\left(  \mathfrak{d}%
,\mathfrak{q}\right)  $-open subset. Namely, if $f=\sum_{n}x^{n}f_{n}\left(
y\right)  $ and $g=\sum_{n}x^{n}g_{n}\left(  y\right)  $ are elements of
$\mathcal{O}_{q}^{\operatorname{op}}\left(  U\right)  $, then
\[
f\cdot g=\sum_{n}x^{n}\left(  \sum_{i+j=n}f_{i}\left(  q^{j}y\right)
g_{j}\left(  y\right)  \right)  \in\mathcal{O}_{q}^{\operatorname{op}}\left(
U\right)  ,
\]
and it defines a Fr\'{e}chet $\widehat{\otimes}$-algebra structure on
$\mathcal{O}_{q}^{\operatorname{op}}\left(  U\right)  $.

\begin{corollary}
\label{corTwist}The twisting morphism $\left(  t,t^{\times}\right)  :\left(
\mathbb{C}_{xy}^{\operatorname{op}},\mathcal{O}_{q}^{\operatorname{op}%
}\right)  \rightarrow\left(  \mathbb{C}_{xy},\mathcal{O}_{q}\right)
^{\operatorname{op}}$ of the ringed spaces with
\begin{align*}
t  &  :\mathbb{C}_{xy}^{\operatorname{op}}\rightarrow\mathbb{C}_{xy},\quad
t\left(  z,w\right)  =\left(  w,z\right)  \text{,}\\
t^{\times}  &  :\left(  \mathcal{O}_{q}\right)  ^{\operatorname{op}%
}\rightarrow t_{\ast}\mathcal{O}_{q}^{\operatorname{op}},\quad t^{\times
}\left(  \sum f_{n}\left(  x\right)  y^{n}\right)  =\sum x^{n}f_{n}\left(
y\right)
\end{align*}
implements an isomorphism of the ringed spaces, where $\left(  \mathbb{C}%
_{xy},\mathcal{O}_{q}\right)  ^{\operatorname{op}}$ denotes the same ringed
space $\left(  \mathbb{C}_{xy},\mathcal{O}_{q}\right)  $ but with the opposite
multiplication in the structure sheaf $\mathcal{O}_{q}$.
\end{corollary}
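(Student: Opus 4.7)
Proof plan. The statement has two pieces: that $t:\mathbb{C}_{xy}^{\operatorname{op}}\to\mathbb{C}_{xy}$ is a homeomorphism, and that $t^\times$ is an isomorphism of sheaves of Fr\'echet $\widehat{\otimes}$-algebras from $(\mathcal{O}_q)^{\operatorname{op}}$ onto $t_\ast\mathcal{O}_q^{\operatorname{op}}$. For the first piece, $t$ restricts to a homeomorphism $\mathbb{C}_x^{\operatorname{op}}\to\mathbb{C}_y$ (both carrying the disk topology $\mathfrak{d}$) and to a homeomorphism $\mathbb{C}_y^{\operatorname{op}}\to\mathbb{C}_x$ (both carrying the $\mathfrak{q}$-topology). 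Since each of $\mathbb{C}_{xy}$ and $\mathbb{C}_{xy}^{\operatorname{op}}$ carries the final topology relative to the embeddings of its two axes, $t$ is automatically a homeomorphism of the unions. Concretely, for any $(\mathfrak{q},\mathfrak{d})$-open subset $U=U_x\cup U_y\subseteq\mathbb{C}_{xy}$ one has $t^{-1}(U)=t(U_y)\cup t(U_x)$, which is a $(\mathfrak{d},\mathfrak{q})$-open subset of $\mathbb{C}_{xy}^{\operatorname{op}}$.

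For the sheaf morphism, fix such a $U$. By Lemma \ref{lemQC3} the sections are $\mathcal{O}_q(U)=\{\sum_n f_n(x)y^n:f_n\in\mathcal{O}(U_x)\}$ with seminorms $p_{K,\rho}(f)=\sum\|f_n\|_K\rho^n$, and the symmetric description given after Lemma \ref{lemQC3} presents $\mathcal{O}_q^{\operatorname{op}}(t^{-1}(U))$ as $\{\sum_n x^n g_n(y)\}$ with $g_n\in\mathcal{O}(t(U_x))$ and the same form of defining seminorms. Since pulling back holomorphic functions along the bijection $t$ is an isometric identification of the relevant Fr\'echet spaces of coefficients, the prescription $t^\times(\sum f_n(x)y^n)=\sum x^n f_n(y)$ defines a topological linear isomorphism that is manifestly compatible with restrictions of the two presheaves.

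The core computation is anti-multiplicativity. For $f=\sum f_n(x)y^n$ and $g=\sum g_n(x)y^n$ in $\mathcal{O}_q(U)$, the formal $q$-multiplication of Subsection \ref{subsecCXY} gives
\[
g\cdot f=\sum_{n}\Big(\sum_{i+j=n}g_i(x)\,f_j(q^{i}x)\Big)y^{n},
\]
hence
\[
t^{\times}(f\cdot^{\operatorname{op}}g)=t^{\times}(g\cdot f)=\sum_{n}x^{n}\sum_{i+j=n}g_i(y)\,f_j(q^{i}y).
\]
On the other hand, the product formula for $\mathcal{O}_q^{\operatorname{op}}$ stated just before the corollary yields
\[
t^{\times}(f)\cdot t^{\times}(g)=\sum_{n}x^{n}\sum_{i+j=n}f_i(q^{j}y)\,g_j(y),
\]
which coincides with the previous expression after the index swap $i\leftrightarrow j$. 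Therefore $t^\times$ is an algebra homomorphism $(\mathcal{O}_q)^{\operatorname{op}}\to t_\ast\mathcal{O}_q^{\operatorname{op}}$, and together with the Fr\'echet isomorphism on sections from the previous step this yields the desired isomorphism of ringed spaces.

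I expect the genuine point of the argument to be precisely the matching of the $q$-twists in the two product formulas displayed above; it is the one place where the non-symmetric shape of the multiplication (the shift $x\mapsto q^{i}x$ on the $\mathcal{O}_q$ side versus $y\mapsto q^{j}y$ on the $\mathcal{O}_q^{\operatorname{op}}$ side) has to be traced carefully. Every other ingredient---the homeomorphism property of $t$, the bijection of sections, the continuity estimates for $t^\times$ and its inverse, and the compatibility with restrictions---reduces to unpacking the definitions and invoking Lemma \ref{lemQC3} and Lemma \ref{lemQC4}.
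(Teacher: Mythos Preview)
Your proposal is correct and follows essentially the same route as the paper: the homeomorphism is noted from the definitions of the two topologies, the Fr\'echet-space isomorphism of sections comes from Lemma \ref{lemQC3}, and the multiplicativity check is exactly the index-swap computation you display, matching the paper's proof line for line.
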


\begin{proof}
The twisting map $t:\mathbb{C}_{xy}^{\operatorname{op}}\rightarrow
\mathbb{C}_{xy}$, $t\left(  x,y\right)  =\left(  y,x\right)  $ is obviously a
homeomorphism of the topological spaces. By Lemma \ref{lemQC3}, the mapping
\[
t^{\times}\left(  U\right)  :\mathcal{O}_{q}\left(  U_{x}\cup U_{y}\right)
^{\operatorname{op}}\rightarrow\mathcal{O}_{q}^{\operatorname{op}}\left(
U_{y}\cup U_{x}\right)  ,\quad t^{\times}\left(  \sum f_{n}\left(  x\right)
y^{n}\right)  =\sum x^{n}f_{n}\left(  y\right)
\]
is an isomorphism of the Fr\'{e}chet spaces, where $\mathcal{O}_{q}\left(
U_{x}\cup U_{y}\right)  ^{\operatorname{op}}$ is the same algebra
$\mathcal{O}_{q}\left(  U\right)  $ with the opposite multiplication. Notice
that
\begin{align*}
t^{\times}\left(  U\right)  \left(  f\cdot^{\operatorname{op}}g\right)   &
=t^{\times}\left(  U\right)  \left(  g\cdot f\right)  =t^{\times}\left(
U\right)  \left(  \sum_{n}\left(  \sum_{i+j=n}g_{i}\left(  x\right)
f_{j}\left(  q^{i}x\right)  \right)  y^{n}\right) \\
&  =\sum_{n}x^{n}\left(  \sum_{i+j=n}g_{i}\left(  y\right)  f_{j}\left(
q^{i}y\right)  \right)  =\sum_{n}x^{n}\left(  \sum_{j+i=n}f_{j}\left(
q^{i}y\right)  g_{i}\left(  y\right)  \right) \\
&  =t^{\times}\left(  U\right)  \left(  f\right)  \cdot t^{\times}\left(
U\right)  \left(  g\right)  ,
\end{align*}
which means that $t^{\times}$ implements a Fr\'{e}chet $\widehat{\otimes}%
$-algebra presheaf isomorphism by Lemma \ref{lemQC4}. Hence $\left(
t,t^{\times}\right)  $ is an isomorphism of the ringed spaces (see
\cite[2.1]{Harts}).
\end{proof}

In the case of $\left\vert q\right\vert >1$ we obtain the noncommutative
$q^{-1}$-plane $\left(  \mathbb{C}_{yx},\mathcal{O}_{q^{-1}}\right)  $ (see
Remark \ref{remCXCY}). Using Corollary \ref{corTwist}, we conclude that
$\left(  \mathbb{C}_{yx}^{\operatorname{op}},\mathcal{O}_{q^{-1}%
}^{\operatorname{op}}\right)  =\left(  \mathbb{C}_{yx},\mathcal{O}_{q^{-1}%
}\right)  ^{\operatorname{op}}$ up to an isomorphism of the ringed spaces.

Finally, as we have seen above there are canonical isomorphisms $\left(
\mathbb{C}_{x},\mathcal{O}_{q}|\mathbb{C}_{x}\right)  \rightarrow\left(
\mathbb{C}_{x},\mathcal{O}^{\mathfrak{q}}\right)  $ and $\left(
\mathbb{C}_{y},\mathcal{O}_{q}|\mathbb{C}_{y}\right)  \rightarrow\left(
\mathbb{C}_{y},\mathcal{O}^{\mathfrak{d}}|\mathbb{C}_{y}\right)  $ of the
ringed spaces. Notice also that if $\iota^{x}:\mathbb{C}_{x}\rightarrow
\mathbb{C}_{xy}$ is the canonical embedding then the inclusion morphism
$\iota_{\ast}^{x}\mathcal{O}^{\mathfrak{q}}\rightarrow\mathcal{O}_{q}$ is a
Fr\'{e}chet sheaf $\widehat{\otimes}$-algebra morphism. In a similar way, we
obtain the morphism $\iota_{\ast}^{y}\mathcal{O}^{\mathfrak{d}}\rightarrow
\mathcal{O}_{q}$.

\section{The decomposition of the sheaf $\mathcal{O}_{q}$ and functional
calculus}

In this section we prove the main results on the decomposition of the sheaf
$\mathcal{O}_{q}$ and the related functional calculus theorem in terms of the
Harte spectrum.

\subsection{The decomposition theorem}

The following decomposition theorem, which is similar to one from Proposition
\ref{tO}, holds for the stalks of the sheaf $\mathcal{O}_{q}$.

\begin{theorem}
\label{tdecomO}If $U\subseteq\mathbb{C}_{xy}$ is a $\left(  \mathfrak{q}%
,\mathfrak{d}\right)  $-open subset then the Fr\'{e}chet $\widehat{\otimes}%
$-algebra $\mathcal{O}_{q}\left(  U\right)  $ admits the following topological
direct sum decomposition
\[
\mathcal{O}_{q}\left(  U\right)  =\mathcal{O}^{\mathfrak{q}}\left(
U_{x}\right)  \oplus\operatorname{Rad}\mathcal{O}_{q}\left(  U\right)
\oplus\mathcal{I}_{\mathfrak{d}}\left(  U_{y}\right)
\]
into the closed subalgebras. In particular, the algebra $\mathcal{O}%
_{q}\left(  U\right)  $ is commutative modulo its Jacobson radical
$\operatorname{Rad}\mathcal{O}_{q}\left(  U\right)  $ and $\operatorname{Spec}%
\left(  \mathcal{O}_{q}\left(  U\right)  \right)  =U.$
\end{theorem}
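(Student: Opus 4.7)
The plan is to parallel the proof of Proposition \ref{tO}, lifting the global decomposition to an arbitrary $(\mathfrak{q},\mathfrak{d})$-open subset $U = U_x \cup U_y$. By Lemma \ref{lemQC3} every $f \in \mathcal{O}_q(U)$ has a unique representation $f = \sum_n f_n(x) y^n$ with $f_n \in \mathcal{O}(U_x)$. I would introduce the three linear maps $p_x(f) = f_0(x)$, $p_y(f) = \sum_{n\geq 1} f_n(0)\, y^n$, and $p_{xy}(f) = \sum_{n\geq 1} (f_n(x)-f_n(0))\, y^n$ on $\mathcal{O}_q(U)$. Each is continuous in the defining seminorms $\{p_{K,\rho}\}$ (using the evaluation estimate $|f_n(0)| \leq \|f_n\|_{K\cup\{0\}}$), the three sum to the identity, and a direct calculation using the formal $q$-multiplication from Lemma \ref{lemQC4} shows that $\operatorname{Im}(p_x) \cong \mathcal{O}^{\mathfrak{q}}(U_x)$ and $\operatorname{Im}(p_y) \cong \mathcal{I}_{\mathfrak{d}}(U_y)$ are closed subalgebras, while $J(U) := \operatorname{Im}(p_{xy})$ is a closed two-sided ideal. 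This already delivers the algebraic and topological direct sum; only the identification $J(U) = \operatorname{Rad}\mathcal{O}_q(U)$ and the description of $\operatorname{Spec}(\mathcal{O}_q(U))$ remain.

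The crucial step is to show $J(U) \subseteq \operatorname{Rad}\mathcal{O}_q(U)$. By Lemma \ref{lemQC3} we have $\mathcal{O}_q(U) = \underleftarrow{\lim}\{\mathcal{A}(K) \widehat{\otimes} \mathcal{A}(\rho) : K\subseteq U_x \text{ quasicompact},\ \rho < r_y\}$, and in each Banach factor the element $x$ has $q$-compact spectrum contained in $K$. For a given such $K$ I would intercalate an auxiliary $q$-open set $W$ with $K \subseteq W$, $\overline{W} \subseteq U_x$, and $\partial W$ a finite union of piecewise smooth curves, so that Proposition \ref{propTQA} applies to any $g \in J(U)$ regarded in $\mathcal{A}(K) \widehat{\otimes} \mathcal{A}(\rho)$ (with $W$ and $\overline{W}$ playing the roles of its $U$ and $K$ respectively) and yields quasinilpotency there. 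Since the Fr\'echet spectrum of $g$ equals the union of its spectra along the inverse system \cite[5.2.12]{Hel}, as in the proof of Proposition \ref{tO}, it follows that $g$ is quasinilpotent in $\mathcal{O}_q(U)$, hence $J(U) \subseteq \operatorname{Rad}\mathcal{O}_q(U)$.

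For the reverse inclusion and the character space I would pass to the quotient $\mathcal{O}_q(U)/J(U) = \mathcal{O}^{\mathfrak{q}}(U_x) \oplus \mathcal{I}_{\mathfrak{d}}(U_y)$, which is commutative since the relation $x^\sim y^\sim = 0$ now holds. Every continuous character $\lambda$ satisfies $\lambda(x^\sim)\lambda(y^\sim)=0$ and therefore factors through exactly one of the two summands, so by Gelfand theory $\operatorname{Spec}(\mathcal{O}_q(U)/J(U)) = U_x \cup U_y = U$. The spectral mapping theorem for the joint spectrum \cite[1.4.7]{BourST}, exactly as in the proof of Proposition \ref{pAfree}, rules out nonzero quasinilpotents in this quotient, giving semisimplicity; hence $\operatorname{Rad}\mathcal{O}_q(U) = J(U)$, $\operatorname{Spec}(\mathcal{O}_q(U)) = U$, and commutativity modulo the radical follows.

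The main obstacle will be the uniformity step in the second paragraph: the quasicompact sets $K$ arising in Lemma \ref{lemQC3} are not themselves bounded by piecewise smooth curves, so one must construct the intermediate $q$-open neighborhoods $W$ meeting the smoothness requirement of Proposition \ref{propTQA}, and verify that the resulting quasinilpotency estimates for $g$ depend only on $(W,K,\rho)$, independently of any truncation of the power series expansion of $g$, so that they transfer consistently across the inverse system. Once this uniformity is secured, the rest of the argument runs on the blueprint of Propositions \ref{pAfree} and \ref{tO}.
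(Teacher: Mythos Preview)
Your proposal is correct and follows essentially the same route as the paper: the paper defines the ideal $\mathcal{I}_{xy}(U)$ exactly as your $J(U)$, obtains the topological direct sum via the same splitting $f=f_0+\sum_{n\ge 1}(f_n-f_n(0))y^n+\sum_{n\ge 1}f_n(0)y^n$, and then applies Proposition~\ref{propTQA} in each Banach factor $\mathcal{A}(\omega)\widehat{\otimes}\mathcal{A}(\rho)$ of the inverse system (with an intercalated $q$-open neighbourhood $V_x$ playing the role of your $W$) to deduce quasinilpotency and hence $\mathcal{I}_{xy}(U)=\operatorname{Rad}\mathcal{O}_q(U)$. The obstacle you flag is minor and the paper treats it implicitly: the intermediate $q$-open set can always be taken to be a finite union of open disks (the $q$-tail of any finite disk cover is absorbed into a single small disk about the origin), giving piecewise circular boundary, and since Proposition~\ref{propTQA} already handles the full series $\sum_k f_k(x)y^k$ no truncation or separate uniformity argument across the inverse system is required.
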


\begin{proof}
Put $\mathcal{I}_{xy}\left(  U\right)  $ to be the set of all series
$f=\sum_{n\in\mathbb{N}}f_{n}\left(  x\right)  y^{n}\in\mathcal{O}_{q}\left(
U\right)  $ (see Lemma \ref{lemQC3}) such that $f_{n}\left(  0\right)  =0$.
Since $U_{y}$ is a disk and $\left\{  y^{n}:n\in\mathbb{N}\right\}  $ is an
absolute basis in $\mathcal{I}_{\mathfrak{d}}\left(  U_{y}\right)  $, it
follows that $\mathcal{I}_{xy}\left(  U\right)  =\mathcal{I}_{\mathfrak{q}%
}\left(  U_{x}\right)  \widehat{\otimes}\mathcal{I}_{\mathfrak{d}}\left(
U_{y}\right)  $ is a closed subspace of $\mathcal{O}_{q}\left(  U\right)  $.
Moreover, by Lemma \ref{lemQC3} and the Open Mapping Theorem for the
Fr\'{e}chet spaces \cite[3.2, Corollary 3 ]{Sch}, the topological direct sum
decomposition%
\[
\mathcal{O}_{q}\left(  U\right)  =\mathcal{O}^{\mathfrak{q}}\left(
U_{x}\right)  \oplus\mathcal{I}_{xy}\left(  U\right)  \oplus\mathcal{I}%
_{\mathfrak{d}}\left(  U_{y}\right)
\]
holds. For every $f\in\mathcal{O}_{q}\left(  U\right)  $ we have%
\[
f=\sum_{n\in\mathbb{Z}_{+}}f_{n}\left(  x\right)  y^{n}=f_{0}\left(  x\right)
+\sum_{n\in\mathbb{N}}\left(  f_{n}\left(  x\right)  -f_{n}\left(  0\right)
\right)  y^{n}+\sum_{n\in\mathbb{N}}f_{n}\left(  0\right)  y^{n}.
\]
If $f\in\mathcal{I}_{xy}\left(  U\right)  $ and $g\in\mathcal{O}_{q}\left(
U\right)  $, then $fg=\sum_{n\in\mathbb{N}}\left(  \sum_{i+j=n}f_{i}\left(
x\right)  g_{j}\left(  q^{i}x\right)  \right)  y^{n}$ with
\[
\sum_{i+j=n}f_{i}\left(  0\right)  g_{j}\left(  0\right)  =0\quad\text{for all
}n,
\]
which means that $\mathcal{I}_{xy}\left(  U\right)  $ is a closed two-sided
ideal of the Fr\'{e}chet algebra $\mathcal{O}_{q}\left(  U\right)  $. Note
also that $\mathcal{O}^{\mathfrak{q}}\left(  U_{x}\right)  $ and
$\mathcal{I}_{\mathfrak{d}}\left(  U_{y}\right)  $ are closed subalgebras of
$\mathcal{O}_{q}\left(  U\right)  $ with $\mathcal{I}_{\mathfrak{d}}\left(
U_{y}\right)  ^{+}=\mathcal{O}^{\mathfrak{d}}\left(  U_{y}\right)  $.

Further, $\mathcal{O}_{q}\left(  U\right)  /\mathcal{I}_{xy}\left(  U\right)
=\mathcal{O}^{\mathfrak{q}}\left(  U_{x}\right)  ^{\sim}\oplus\mathcal{I}%
_{\mathfrak{d}}\left(  U_{y}\right)  ^{\sim}$ is a semisimple, commutative,
Arens-Michael-Fr\'{e}chet algebra such that $\mathcal{I}_{\mathfrak{q}}\left(
U_{x}\right)  ^{\sim}\cdot\mathcal{I}_{\mathfrak{d}}\left(  U_{y}\right)
^{\sim}=\left\{  0\right\}  $. It follows that $\operatorname{Spec}\left(
\mathcal{O}_{q}\left(  U\right)  /\mathcal{I}_{xy}\left(  U\right)  \right)
=U_{x}\cup U_{y}=U$ \cite[5.3]{Hel}, and $\operatorname{Rad}\mathcal{O}%
_{q}\left(  U\right)  \subseteq\mathcal{I}_{xy}\left(  U\right)  $.

Now fix a $q$-compact subset $\omega\subseteq U_{x}$ and its $q$-open
neighborhood $V_{x}$ in $U_{x}$ with its compact closure $K$. If $f=\sum
_{n\in\mathbb{N}}f_{n}\left(  x\right)  y^{n}\in\mathcal{I}_{xy}\left(
U\right)  $ then $\sum_{n\in\mathbb{N}}\left\Vert f_{n}\right\Vert _{K}%
\rho^{n}<\infty$ for every $\rho<r_{y}$. By Lemma \ref{lemQC4}, $\mathcal{A}%
\left(  \omega\right)  \widehat{\otimes}\mathcal{A}\left(  \rho\right)  $ is a
Banach algebra being a Hausdorff (seminormed) completion of $\left(
\mathcal{O}_{q}\left(  U\right)  ,p_{\omega,\rho}\right)  $. The presence of
an absolute basis in $\mathcal{A}\left(  \rho\right)  $ allows us to conclude
that $\sigma\left(  x\right)  =\omega$ in the Banach algebra $\mathcal{A}%
\left(  \omega\right)  \widehat{\otimes}\mathcal{A}\left(  \rho\right)  $.
Namely, if $h\left(  x\right)  \left(  \sum_{n}g_{n}\left(  x\right)
y^{n}\right)  =1$ for some $h\left(  x\right)  \in\mathcal{A}\left(
\omega\right)  $, then $h\left(  x\right)  g_{0}\left(  x\right)  =1$.
Consider the closure $\mathcal{A}_{q}$ of $\mathfrak{A}_{q}$ in $\mathcal{A}%
\left(  \omega\right)  \widehat{\otimes}\mathcal{A}\left(  \rho\right)  $ with
its $x$-inverse closed hull $\mathcal{A}_{q,x}$. Notice that $\mathcal{A}_{q}$
is a Banach $q$-plane, and
\[
f|\mathcal{A}\left(  \omega\right)  \widehat{\otimes}\mathcal{A}\left(
\rho\right)  =\sum_{n\in\mathbb{N}}\left(  f_{n}|\omega\right)  \left(
x\right)  y^{n}\in\mathcal{A}_{q,x}.
\]
Using Proposition \ref{propTQA}, we deduce that $f|\mathcal{A}\left(
\omega\right)  \widehat{\otimes}\mathcal{A}\left(  \rho\right)  \in Q\left(
\mathcal{A}\left(  \omega\right)  \widehat{\otimes}\mathcal{A}\left(
\rho\right)  \right)  $. It follows that (see Lemma \ref{lemQC3})
\[
\sigma\left(  f\right)  =\cup\left\{  f|\mathcal{A}\left(  \omega\right)
\widehat{\otimes}\mathcal{A}\left(  \rho\right)  :\omega\subseteq U_{x}%
,\rho<r_{y}\right\}  =\left\{  0\right\}  ,
\]
which means that $\mathcal{I}_{xy}\left(  U\right)  \subseteq Q\left(
\mathcal{O}_{q}\left(  U\right)  \right)  $. Since $\mathcal{I}_{xy}\left(
U\right)  $ is an ideal, it follows that $\mathcal{I}_{xy}\left(  U\right)
\subseteq$ $\operatorname{Rad}\mathcal{O}_{q}\left(  U\right)  $. Hence
$\mathcal{I}_{xy}\left(  U\right)  =$ $\operatorname{Rad}\mathcal{O}%
_{q}\left(  U\right)  $.

Finally, $\operatorname{Spec}\left(  \mathcal{O}_{q}\left(  U\right)  \right)
=\operatorname{Spec}\left(  \mathcal{O}_{q}\left(  U\right)
/\operatorname{Rad}\mathcal{O}_{q}\left(  U\right)  \right)
=\operatorname{Spec}\left(  \mathcal{O}_{q}\left(  U\right)  /\mathcal{I}%
_{xy}\left(  U\right)  \right)  =U$.
\end{proof}

\begin{corollary}
The stalk $\left(  \mathcal{O}_{q}\right)  _{0}$ of the sheaf $\mathcal{O}%
_{q}$ at the origin is a noncommutative local algebra, which admits the
following decomposition
\[
\left(  \mathcal{O}_{q}\right)  _{0}=\mathcal{O}_{0}\oplus I\oplus
\mathcal{I}_{\mathfrak{d},0},
\]
where $I=\underrightarrow{\lim}\left\{  \operatorname{Rad}\mathcal{O}%
_{q}\left(  U\right)  \right\}  $ is the two-sided ideal of $\left(
\mathcal{O}_{q}\right)  _{0}$. It has the unique maximal ideal $\mathfrak{m}%
=\mathcal{I}_{\mathfrak{q},0}\oplus I\oplus\mathcal{I}_{\mathfrak{d},0}$,
which is $\operatorname{Rad}\left(  \mathcal{O}_{q}\right)  _{0}$.
\end{corollary}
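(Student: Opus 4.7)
The plan is to derive the statement by passing to the direct limit in the decomposition of Theorem \ref{tdecomO}. A cofinal system of $(\mathfrak{q},\mathfrak{d})$-neighborhoods of the origin is given by the bidisks $U_\varepsilon=B(0,\varepsilon_x)\cup B(0,\varepsilon_y)$. Applying Theorem \ref{tdecomO} to each $U_\varepsilon$ gives
\[
\mathcal{O}_q(U_\varepsilon)=\mathcal{O}^{\mathfrak{q}}(B(0,\varepsilon_x))\oplus\operatorname{Rad}\mathcal{O}_q(U_\varepsilon)\oplus\mathcal{I}_{\mathfrak{d}}(B(0,\varepsilon_y)),
\]
and since the direct limit functor preserves direct sums and restriction maps respect this splitting, the colimit yields
$(\mathcal{O}_q)_0=\mathcal{O}_0\oplus I\oplus\mathcal{I}_{\mathfrak{d},0}$, where I use Lemma \ref{lemQC2} to identify $\underrightarrow{\lim}\mathcal{O}^{\mathfrak{q}}(B(0,\varepsilon_x))=\mathcal{O}_0^{\mathfrak{q}}=\mathcal{O}_0$. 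Since each $\operatorname{Rad}\mathcal{O}_q(U_\varepsilon)$ is a two-sided ideal and restriction maps are algebra homomorphisms, $I=\underrightarrow{\lim}\operatorname{Rad}\mathcal{O}_q(U_\varepsilon)$ is a two-sided ideal of $(\mathcal{O}_q)_0$.

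Next I would set $\mathfrak{m}=\mathcal{I}_{\mathfrak{q},0}\oplus I\oplus\mathcal{I}_{\mathfrak{d},0}$, which using $\mathcal{O}_0=\mathbb{C}\oplus\mathcal{I}_{\mathfrak{q},0}$ gives $(\mathcal{O}_q)_0=\mathbb{C}\oplus\mathfrak{m}$ as vector spaces. To identify $\mathfrak{m}$ with the kernel of the unital algebra homomorphism ``evaluation at the origin'' $\mathrm{ev}_0:(\mathcal{O}_q)_0\to\mathbb{C}$, I note that on each $\mathcal{O}_q(U_\varepsilon)$ the map $f=\sum f_n(x)y^n\mapsto f_0(0)$ is a continuous character whose kernel, by the proof of Theorem \ref{tdecomO}, is exactly $\mathcal{I}_{\mathfrak{q}}(B(0,\varepsilon_x))\oplus\operatorname{Rad}\mathcal{O}_q(U_\varepsilon)\oplus\mathcal{I}_{\mathfrak{d}}(B(0,\varepsilon_y))$. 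Taking colimits, $\mathfrak{m}=\ker(\mathrm{ev}_0)$ is a two-sided ideal and $(\mathcal{O}_q)_0/\mathfrak{m}=\mathbb{C}$; hence $\mathfrak{m}$ is a maximal two-sided ideal.

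The main technical step is to show that every element of $(\mathcal{O}_q)_0\setminus\mathfrak{m}$ is invertible, which will simultaneously prove that $\mathfrak{m}$ is the unique maximal left (and right) ideal and that $\mathfrak{m}=\operatorname{Rad}(\mathcal{O}_q)_0$. Writing such an element as $a(1+h)$ with $a\in\mathbb{C}^\times$ and $h\in\mathfrak{m}$, it suffices to invert $1+h$. Represent $h$ by $\tilde h=\sum h_n(x)y^n\in\mathcal{O}_q(U_\varepsilon)$ with $h_0(0)=0$. The plan is to find a smaller bidisk $V=B(0,\beta)\cup B(0,\alpha)\subseteq U_\varepsilon$ on which all the defining seminorms $p_{K,\rho}$ satisfy $p_{K,\rho}(\tilde h|_V)<1$ uniformly; then the Neumann series $\sum_{k\ge 0}(-\tilde h|_V)^k$ converges in the Fr\'echet $\widehat{\otimes}$-algebra $\mathcal{O}_q(V)$ by multiplicativity of $p_{K,\rho}$ (Lemma \ref{lemQC4}), producing an inverse of $1+h$ in the stalk. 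To verify the uniform bound, I would use that $K\subseteq\overline{B(0,\beta)}$ and $\rho<\alpha$ give $p_{K,\rho}(\tilde h|_V)\le\sum_n\|h_n\|_{\overline{B(0,\beta)}}\alpha^n$; the $n=0$ term tends to $0$ as $\beta\to 0$ by continuity of $h_0$ at the origin and $h_0(0)=0$, while the tail $\sum_{n\ge 1}\|h_n\|_{\overline{B(0,\beta)}}\alpha^n$ tends to $0$ as $\alpha\to 0$ by absolute convergence on a fixed neighborhood. The main obstacle, and subtlety, lies precisely in shrinking $V$ so that this estimate holds simultaneously for the whole defining family of seminorms, which is what allows Neumann inversion inside a single Fr\'echet algebra $\mathcal{O}_q(V)$ rather than only seminorm by seminorm.
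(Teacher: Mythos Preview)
Your argument is correct, and the decomposition and identification of $\mathfrak{m}$ as the kernel of evaluation at the origin match the paper exactly. The difference lies in how you establish that $(\mathcal{O}_q)_0$ is local with $\mathfrak{m}=\operatorname{Rad}(\mathcal{O}_q)_0$.

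You show directly that every element outside $\mathfrak{m}$ is invertible: given $h\in\mathfrak{m}$ represented by $\tilde h=\sum h_n(x)y^n$ with $h_0(0)=0$, you shrink to a bidisk $V$ on which $p_{K,\rho}(\tilde h|_V)<1$ uniformly for the full defining family, and then invert $1+\tilde h|_V$ by a Neumann series using the multiplicativity of the seminorms from Lemma~\ref{lemQC4}. The paper instead proceeds structurally: it first observes that $I\subseteq\operatorname{Rad}(\mathcal{O}_q)_0$ (since for $\langle U,f\rangle\in I$ and any $\langle V,g\rangle$, the element $1-gf$ is already invertible in some $\mathcal{O}_q(W)$ because $f|_W\in\operatorname{Rad}\mathcal{O}_q(W)$ by Theorem~\ref{tdecomO}), so every maximal one-sided ideal contains $I$; it then passes to the commutative quotient $(\mathcal{O}_q)_0/I=\mathcal{O}_0\oplus\mathcal{I}_{\mathfrak{d},0}$, where $\mathcal{I}_{\mathfrak{q},0}\cdot\mathcal{I}_{\mathfrak{d},0}=\{0\}$, and uses a prime-ideal argument together with the locality of $\mathcal{O}_0$ to conclude that $\mathfrak{m}$ is the unique maximal ideal. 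Your route is more hands-on and self-contained, requiring only the seminorm description of $\mathcal{O}_q(V)$; the paper's route leverages Theorem~\ref{tdecomO} more fully (invertibility of $1-gf$ comes for free from the Jacobson radical property there) and reduces the remaining work to standard commutative algebra. Both are valid; yours has the virtue of making the analytic mechanism (shrinking the neighborhood) explicit.
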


\begin{proof}
Since the presheaf and the related sheaf have the same stalks at every point,
we deduce that
\begin{align*}
\left(  \mathcal{O}_{q}\right)  _{0}  &  =\underrightarrow{\lim}\left\{
\mathcal{O}_{q}\left(  U\right)  \right\}  =\underrightarrow{\lim}\left\{
\mathcal{O}\left(  U_{x}\right)  \right\}  \oplus\underrightarrow{\lim
}\left\{  \mathcal{I}_{xy}\left(  U\right)  \right\}  \oplus
\underrightarrow{\lim}\left\{  \mathcal{I}_{y}\left(  U_{y}\right)  \right\}
\\
&  =\mathcal{O}_{0}\oplus\underrightarrow{\lim}\left\{  \operatorname{Rad}%
\mathcal{O}_{q}\left(  U\right)  \right\}  \oplus\mathcal{I}_{\mathfrak{d}%
,0}=\mathcal{O}_{0}\oplus I\oplus\mathcal{I}_{\mathfrak{d},0}%
\end{align*}
thanks to Theorem \ref{tdecomO}. Notice that $\mathfrak{m}=\mathcal{I}%
_{\mathfrak{q},0}\oplus I\oplus\mathcal{I}_{\mathfrak{d},0}$ is a maximal
ideal of $\left(  \mathcal{O}_{q}\right)  _{0}$ with its residue field
$\left(  \mathcal{O}_{q}\right)  _{0}/\mathfrak{m=}\mathcal{O}_{0}%
/\mathcal{I}_{\mathfrak{q},0}=\mathbb{C}$.

If $\left\langle U,f\right\rangle \in I$ and $\left\langle V,g\right\rangle
\in\left(  \mathcal{O}_{q}\right)  _{0}$, then $\left\langle W,f\right\rangle
\in\operatorname{Rad}\mathcal{O}_{q}\left(  W\right)  $ for a $q$-open subset
$W\subseteq U\cap V$ and $1-\left\langle W,g\right\rangle \left\langle
W,f\right\rangle $ is invertible in $\mathcal{O}_{q}\left(  W\right)  $, which
in turn implies that $\left\langle U,f\right\rangle \in\operatorname{Rad}%
\left(  \mathcal{O}_{q}\right)  _{0}$. Thus $I\subseteq\operatorname{Rad}%
\left(  \mathcal{O}_{q}\right)  _{0}$. In particular, every maximal (left or
right) ideal $\mathfrak{n}$ of $\left(  \mathcal{O}_{q}\right)  _{0}$ contains
the ideal $I$, which in turn is identified with a maximal ideal of the
commutative algebra $\left(  \mathcal{O}_{q}\right)  _{0}/I=\mathcal{O}%
_{0}\oplus\mathcal{I}_{\mathfrak{d},0}$ with $\mathcal{I}_{\mathfrak{q}%
,0}\mathcal{I}_{\mathfrak{d},0}=\left\{  0\right\}  $. If $\mathcal{I}%
_{\mathfrak{q},0}\nsubseteq\mathfrak{n}$ then $f\notin\mathfrak{n}$ for some
$f\in\mathcal{I}_{\mathfrak{q},0}$, whereas $f\mathcal{I}_{\mathfrak{d}%
,0}=\left\{  0\right\}  \subseteq\mathfrak{n}$. It follows that $\mathcal{I}%
_{\mathfrak{d},0}\subseteq\mathfrak{n}$ being $\mathfrak{n}$ a prime ideal. In
particular, $\mathfrak{n}$ is identified with a maximal ideal of the quotient
algebra $\left(  \left(  \mathcal{O}_{q}\right)  _{0}/I\right)  /\mathcal{I}%
_{\mathfrak{d},0}=\mathcal{O}_{0}$. But $\mathcal{O}_{0}$ is local, therefore
$\mathfrak{n=}\mathcal{I}_{\mathfrak{q},0}$ or $\mathfrak{n=m}$.
\end{proof}

Based on Theorem \ref{tdecomO}, we deduce that the sheaf $\mathcal{O}_{q}$
admits the following decomposition
\[
\mathcal{O}_{q}=\mathcal{O}^{\mathfrak{q}}\oplus\mathcal{I}_{xy}%
\oplus\mathcal{I}_{\mathfrak{d}}%
\]
with a two-sided ideal subsheaf $\mathcal{I}_{xy}$, which is the sheaf
associated to the presheaf $\operatorname{Rad}\mathcal{O}_{q}$.

\subsection{The functional calculus and spectral mapping theorem}

The functional calculus problem is solved in the following way.

\begin{theorem}
\label{thFunCal}Let $\mathfrak{A}_{q}\rightarrow\mathcal{B}$, $x\mapsto T$,
$y\mapsto S$ be a representation given be a pair $\left(  T,S\right)  $ of
elements of a unital Banach algebra $\mathcal{B}$ such that $TS=q^{-1}ST$, and
let $U\subseteq\mathbb{C}_{xy}$ be a $\left(  \mathfrak{q},\mathfrak{d}%
\right)  $-open subset. The homomorphism $\mathfrak{A}_{q}\rightarrow
\mathcal{B}$ extends up to a continuous algebra homomorphism $\mathcal{O}%
_{q}\left(  U\right)  \rightarrow\mathcal{B}$ if and only if $\sigma\left(
T\right)  \subseteq U_{x}$ and $\sigma\left(  S\right)  \subseteq U_{y}$.
\end{theorem}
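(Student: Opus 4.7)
The plan is to handle the two directions separately. The ``only if'' direction follows from the structure of $\mathcal{O}_{q}(U)$ given by Theorem \ref{tdecomO} combined with spectral inclusion under a unital homomorphism. The ``if'' direction is built via the holomorphic functional calculi of $T$ and $S$, factored through the Banach algebra $\mathcal{A}(K_{x})\widehat{\otimes}\mathcal{A}(\rho_{y})$ appearing in Lemma \ref{lemQC3}.

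\textbf{Only if direction.} Suppose $\pi:\mathcal{O}_{q}(U)\to\mathcal{B}$ is a continuous unital homomorphism extending the given representation. Since $\operatorname{Rad}\mathcal{O}_{q}(U)$ consists of quasi-regular elements, $\sigma_{\mathcal{O}_{q}(U)}(a)=\sigma_{A}(a^{\sim})$ where $A=\mathcal{O}_{q}(U)/\operatorname{Rad}\mathcal{O}_{q}(U)$. By Theorem \ref{tdecomO}, $A=\mathcal{O}^{\mathfrak{q}}(U_{x})\oplus\mathcal{I}_{\mathfrak{d}}(U_{y})$ is commutative semisimple with $\operatorname{Spec}(A)=U=U_{x}\cup U_{y}$, and a direct computation (as in Proposition \ref{pAfree}) gives $\sigma_{A}(x^{\sim})=U_{x}$ and $\sigma_{A}(y^{\sim})=U_{y}$. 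Applying $\sigma_{\mathcal{B}}(\pi(a))\subseteq\sigma_{\mathcal{O}_{q}(U)}(a)$ yields $\sigma(T)\subseteq U_{x}$ and $\sigma(S)\subseteq U_{y}$.

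\textbf{If direction.} Assume $\sigma(T)\subseteq U_{x}$ and $\sigma(S)\subseteq U_{y}=B(0,r_{y})$. Since $U_{x}$ is $q$-spiraling, the $q$-hull $\sigma(T)_{q}=\{0\}\cup\bigcup_{n\geq 0}q^{n}\sigma(T)$ is compact and contained in $U_{x}$. I choose a $q$-open neighborhood $V_{x}$ of $\sigma(T)_{q}$ of the shape $V_{x}=B(0,\varepsilon)\cup\bigcup_{n=0}^{N}B(q^{n}\sigma(T),|q|^{n}\delta)$ with $V_{x}^{-}\subseteq U_{x}$ and piecewise smooth $\partial V_{x}$; put $K_{x}=V_{x}^{-}$, so that $qK_{x}\subseteq K_{x}$ (i.e.\ $K_{x}$ is $q$-compact). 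Fix also $\rho_{y}\in(r(S),r_{y})$, so $\sigma(S)\subseteq\operatorname{int}\mathbb{D}_{\rho_{y}}$. The Cauchy formula then supplies bounded unital homomorphisms $\mathcal{A}(K_{x})\to\mathcal{B}$, $g\mapsto g(T)$, and $\mathcal{A}(\rho_{y})\to\mathcal{B}$, $g\mapsto g(S)$, with norms $C_{T}$ and $C_{S}$. Using the representation in Lemma \ref{lemQC3}, define
\[
\phi:\mathcal{A}(K_{x})\widehat{\otimes}\mathcal{A}(\rho_{y})\to\mathcal{B},\qquad \phi\Bigl(\sum_{n}g_{n}\otimes z^{n}\Bigr)=\sum_{n}g_{n}(T)S^{n}.
\]
The estimate $\Vert g_{n}(T)S^{n}\Vert\leq C_{T}C_{S}\Vert g_{n}\Vert_{K_{x}}\rho_{y}^{n}$ shows absolute convergence in $\mathcal{B}$ and $\Vert\phi(f)\Vert\leq C_{T}C_{S}\,p_{K_{x},\rho_{y}}(f)$, so $\phi$ is bounded.

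\textbf{Multiplicativity and main obstacle.} The key to multiplicativity is the commutation identity $S^{n}g(T)=g(q^{n}T)S^{n}$ for $g\in\mathcal{A}(K_{x})$. On polynomials it follows by induction from $ST=qTS$, hence $S^{n}T^{k}=q^{nk}T^{k}S^{n}$; for rational functions with poles outside $K_{x}$ it extends by inversion (using that $q^{n}\sigma(T)\subseteq K_{x}$, so both $g(T)$ and $g(q^{n}T)$ exist via the same functional calculus on $K_{x}$); and for general $g\in\mathcal{A}(K_{x})$ it passes by continuity, since both sides are bounded linear functions of $g$ and rationals are dense in $\mathcal{A}(K_{x})$. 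Granted this identity, the formal $q$-multiplication from Lemma \ref{lemQC4} gives $\phi(f\cdot g)=\phi(f)\phi(g)$ by a direct reindexing computation. Precomposing $\phi$ with the canonical continuous map $\mathcal{O}_{q}(U)\to\mathcal{A}(K_{x})\widehat{\otimes}\mathcal{A}(\rho_{y})$ from Lemma \ref{lemQC3} produces the required continuous algebra homomorphism, which sends $x^{i}y^{k}\mapsto T^{i}S^{k}$ and so extends the original representation. The chief delicate point is coordinating the three requirements on $K_{x}$---containing $\sigma(T)_{q}$, being $q$-spiraling, and having piecewise smooth boundary inside $U_{x}$---so that both holomorphic functional calculi and the commutation identity are simultaneously available; once this is set up, the rest is a controlled convergence argument in the spirit of Proposition \ref{propTQA}.
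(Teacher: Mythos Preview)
Your proof is correct and follows essentially the same route as the paper. For the ``only if'' direction the paper is slightly more direct: it simply notes that the decomposition of Theorem \ref{tdecomO} embeds $\mathcal{O}(U_{x})$ and $\mathcal{O}(U_{y})$ as closed unital subalgebras of $\mathcal{O}_{q}(U)$, so restricting the homomorphism gives continuous calculi $\mathcal{O}(U_{x})\to\mathcal{B}$, $x\mapsto T$ and $\mathcal{O}(U_{y})\to\mathcal{B}$, $y\mapsto S$, and then quotes a standard fact (the existence of such a calculus forces $\sigma(T)\subseteq U_{x}$, $\sigma(S)\subseteq U_{y}$) rather than computing $\sigma_{\mathcal{O}_{q}(U)}(x)$ via the semisimple quotient as you do. For the ``if'' direction your argument is the same as the paper's but with more detail supplied: the paper simply asserts the factorization through some $\mathcal{A}(K)\widehat{\otimes}\mathcal{A}(\rho)$ with $K\subseteq U_{x}$ $q$-compact (existence coming from continuity of the one-variable calculi), writes down the estimate $\Vert\sum_{n}f_{n}(T)S^{n}\Vert\leq C\,p_{K,\rho}(f)$, and declares the resulting map an algebra homomorphism, since the key commutation identity $S^{k}g(T)=g(q^{k}T)S^{k}$ was already derived in the proof of Proposition \ref{propTQA}. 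Your explicit construction of $K_{x}$ and your density argument for the commutation identity make these steps self-contained, which is a virtue, but the underlying mechanism is identical.
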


\begin{proof}
If such a calculus $\mathcal{O}_{q}\left(  U\right)  \rightarrow\mathcal{B}$
does exist, then there are continuous homomorphisms $\mathcal{O}\left(
U_{x}\right)  \rightarrow\mathcal{B}$, $x\mapsto T$ and $\mathcal{O}\left(
U_{y}\right)  \rightarrow\mathcal{B}$, $y\mapsto S$ by virtue of Theorem
\ref{tdecomO}. It follows that $\sigma\left(  T\right)  \subseteq U_{x}$ and
$\sigma\left(  S\right)  \subseteq U_{y}$ (see \cite[2.2.15]{Hel}).

Conversely, if $\sigma\left(  T\right)  \subseteq U_{x}$ and $\sigma\left(
S\right)  \subseteq U_{y}$, then there are natural continuous homomorphisms
$\mathcal{O}\left(  U_{x}\right)  \rightarrow\mathcal{B}$, $x\mapsto T$ and
$\mathcal{O}\left(  U_{y}\right)  \rightarrow\mathcal{B}$, $y\mapsto S$, which
are the holomorphic functional calculi. In particular, they are factorized
through bounded homomorphisms $\mathcal{A}\left(  K\right)  \rightarrow
\mathcal{B}$, $x\mapsto T$ and $\mathcal{A}\left(  \rho\right)  \rightarrow
\mathcal{B}$, $y\mapsto S$ for some $q$-compact subset $K\subseteq U_{x}$ and
$\rho<r_{y}$ (see Lemma \ref{lemAK0}). If $f=\sum_{n}f_{n}\left(  x\right)
y^{n}\in\mathcal{O}_{q}\left(  U\right)  $ then
\[
\left\Vert \sum_{n}f_{n}\left(  T\right)  S^{n}\right\Vert \leq\sum
_{n}\left\Vert f_{n}\left(  T\right)  \right\Vert \left\Vert S^{n}\right\Vert
\leq C\sum_{n}\left\Vert f_{n}\right\Vert _{K}\rho^{n}=Cp_{K,\rho}\left(
f\right)
\]
(see Lemma \ref{lemQC4}) for some positive $C$. It means that we come up with
the continuous algebra homomorphism $\mathcal{O}_{q}\left(  U\right)
\rightarrow\mathcal{B}$, $\sum_{n}f_{n}\left(  x\right)  y^{n}\mapsto\sum
_{n}f_{n}\left(  T\right)  S^{n}$ extending $\mathfrak{A}_{q}\rightarrow
\mathcal{B}$. Notice that the latter homomorphism is factorized through
$\mathcal{A}\left(  K\right)  \widehat{\otimes}\mathcal{A}\left(  \rho\right)
\rightarrow\mathcal{B}$ (see Proposition \ref{propTQA}).
\end{proof}

As above let $\mathcal{B}$ be a unital Banach algebra, which contains a
contractive Banach $q$-plane $\mathcal{A}_{q}$ generated by a pair $\left(
T,S\right)  $ with $TS=q^{-1}ST$. We assume that $\mathcal{B}$ is commutative
modulo its Jacobson radical $\operatorname{Rad}\mathcal{B}$. If $\mathcal{B=A}%
_{q}$ then the assumption is equivalent to $\mathcal{I}_{TS}\subseteq
\operatorname{Rad}\mathcal{B}$ by Proposition \ref{pequi}. That is the case of
an operator $q$-plane with $T$ or $S$ to be a compact operator (see
Proposition \ref{pAcom}). In this case, the (open) group $R$ of the invertible
elements of $\mathcal{B}$ defines the regularity (see \cite[Lemma
4.3]{DosievIE}), that is, $ab\in R$ iff $a,b\in R$. That regularity $R$ in
turn defines a (noncommutative) subspectrum $\tau_{R}$ on $\mathcal{B}$, which
is reduced to the known Harte spectrum $\sigma_{\operatorname{H}}$
\cite[Theorem 5.7]{DosievIE}. Moreover, the joint Harte spectrum
$\sigma_{\operatorname{H}}\left(  T,S\right)  $ of the pair $\left(
T,S\right)  $ from the algebra $\mathcal{B}$ is reduced to $\sigma
_{\operatorname{H}}\left(  T^{\sim},S^{\sim}\right)  $ (see \cite[Lemma
3.2]{DosievIE}), where $T^{\sim},S^{\sim}\in\mathcal{B}/\operatorname{Rad}%
\mathcal{B}$. But $T^{\sim}S^{\sim}=0$, therefore
\begin{align*}
\sigma_{\operatorname{H}}\left(  T^{\sim},S^{\sim}\right)   &  =\left\{
\left(  \lambda,0\right)  ,\left(  0,\mu\right)  :\lambda\in\sigma
_{\operatorname{H}}\left(  T^{\sim}\right)  ,\mu\in\sigma_{\operatorname{H}%
}\left(  S^{\sim}\right)  \right\} \\
&  =\left\{  \left(  \lambda,0\right)  ,\left(  0,\mu\right)  :\lambda
\in\sigma_{\operatorname{H}}\left(  T\right)  ,\mu\in\sigma_{\operatorname{H}%
}\left(  S\right)  \right\} \\
&  =\left(  \sigma_{\operatorname{H}}\left(  T\right)  \times\left\{
0\right\}  \right)  \cup\left(  \left\{  0\right\}  \times\sigma
_{\operatorname{H}}\left(  S\right)  \right) \\
&  =\sigma_{\operatorname{H}}\left(  T\right)  \cup\sigma_{\operatorname{H}%
}\left(  S\right)  \subseteq\mathbb{C}_{xy}%
\end{align*}
by virtue of the spectral mapping property of $\sigma_{\operatorname{H}}$.
Thus $\sigma_{\operatorname{H}}\left(  T,S\right)  =\sigma_{\operatorname{H}%
}\left(  T\right)  \cup\sigma_{\operatorname{H}}\left(  S\right)
\subseteq\mathbb{C}_{xy}$.

\begin{corollary}
\label{corFCST}Let $\mathcal{B}$ be a unital Banach algebra, which is
commutative modulo its Jacobson radical, and contains a pair $\left(
T,S\right)  $ with $TS=q^{-1}ST$, and let $U\subseteq\mathbb{C}_{xy}$ be a
$\left(  \mathfrak{q},\mathfrak{d}\right)  $-open subset. Then homomorphism
$\mathfrak{A}_{q}\rightarrow\mathcal{B}$, $x\mapsto T$, $y\mapsto S$ extends
up to a continuous algebra homomorphism $\mathcal{O}_{q}\left(  U\right)
\rightarrow\mathcal{B}$, $f\mapsto f\left(  T,S\right)  $ if and only if
$\sigma_{\operatorname{H}}\left(  T,S\right)  \subseteq U$. In this case,%
\[
\sigma_{\operatorname{H}}\left(  f\left(  T,S\right)  \right)  =f\left(
\sigma_{\operatorname{H}}\left(  T,S\right)  \right)
\]
for every tuple $f\left(  x,y\right)  $ of noncommutative holomorphic
functions from $\mathcal{O}_{q}\left(  U\right)  $.
\end{corollary}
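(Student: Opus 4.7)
The plan is to first deduce the equivalence from Theorem~\ref{thFunCal} by translating the condition $\sigma_{\operatorname{H}}(T,S)\subseteq U$ into the pair of conditions on $\sigma(T)$ and $\sigma(S)$ appearing there, and then to establish the spectral mapping identity by pulling back characters of $\mathcal{B}$ through the functional calculus homomorphism $\phi:\mathcal{O}_q(U)\to\mathcal{B}$.

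For the first implication, the computation
\[
\sigma_{\operatorname{H}}(T,S) = \bigl(\sigma(T)\times\{0\}\bigr)\cup\bigl(\{0\}\times\sigma(S)\bigr)\subseteq\mathbb{C}_{xy}
\]
displayed just before the corollary, together with the observation that every nonempty $\mathfrak{q}$-open subset of $\mathbb{C}_x$ and every nonempty $\mathfrak{d}$-open subset of $\mathbb{C}_y$ must contain the generic point $0$, shows that $\sigma_{\operatorname{H}}(T,S)\subseteq U=U_x\cup U_y$ is equivalent to the pair of ordinary spectral inclusions $\sigma(T)\subseteq U_x$ and $\sigma(S)\subseteq U_y$. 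Applying Theorem~\ref{thFunCal} then yields the desired equivalence with the existence of the continuous extension $f\mapsto f(T,S)$.

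For the spectral mapping identity I would take a continuous character $\chi\in\operatorname{Spec}(\mathcal{B})$; such $\chi$ factors through the semisimple commutative quotient $\mathcal{B}/\operatorname{Rad}\mathcal{B}$ and satisfies $\chi(T)\chi(S)=0$ in view of $TS=q^{-1}ST$ with $q\neq 1$. Composing with $\phi$ produces $\chi\circ\phi\in\operatorname{Spec}(\mathcal{O}_q(U))$, which by Theorem~\ref{tdecomO} corresponds to a unique point $\lambda=(\chi(T),\chi(S))\in U\subseteq\mathbb{C}_{xy}$. The image of the map $\chi\mapsto\lambda$ is precisely $\sigma_{\operatorname{H}}(T,S)$ by the character description of the Harte spectrum in the commutative semisimple algebra $\mathcal{B}/\operatorname{Rad}\mathcal{B}$. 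Then, for any tuple $f=(f_1,\ldots,f_k)\in\mathcal{O}_q(U)^k$, I would compute
\[
\sigma_{\operatorname{H}}\bigl(f(T,S)\bigr) = \bigl\{\bigl(\chi(\phi(f_1)),\ldots,\chi(\phi(f_k))\bigr):\chi\in\operatorname{Spec}(\mathcal{B})\bigr\} = \bigl\{f(\lambda):\lambda\in\sigma_{\operatorname{H}}(T,S)\bigr\} = f\bigl(\sigma_{\operatorname{H}}(T,S)\bigr),
\]
using the identity $(\chi\circ\phi)(f_i)=f_i(\lambda)$ under the identification of $\chi\circ\phi$ with $\lambda\in U$.

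The main delicate point I anticipate is the character description of $\sigma_{\operatorname{H}}$ on tuples in a Banach algebra that is merely commutative modulo its Jacobson radical. This is however already settled in the paragraph preceding the corollary, where the Harte spectrum of $(T,S)$ in $\mathcal{B}$ has been reduced, via \cite[Lemma 3.2]{DosievIE}, to the Gelfand joint spectrum in $\mathcal{B}/\operatorname{Rad}\mathcal{B}$; once this reduction is invoked, the identification used above becomes a standard instance of the spectral mapping theorem for the Gelfand joint spectrum of a commutative Banach algebra, so no genuine obstacle remains.
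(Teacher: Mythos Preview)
Your argument is correct, and the first part (the equivalence) matches the paper exactly. For the spectral mapping identity, however, you take a genuinely different route. The paper invokes the decomposition of Theorem~\ref{tdecomO} explicitly: it writes each $f_i = g_i(x) + r_i(x,y) + h_i(y)$ with $r_i\in\operatorname{Rad}\mathcal{O}_q(U)$, observes that $r_i(T,S)\in\operatorname{Rad}\mathcal{B}$ since $\phi$ is an algebra homomorphism, and then reduces to the commutative spectral mapping theorem for the tuple $g(T^{\sim})+h(S^{\sim})$ in $\mathcal{B}/\operatorname{Rad}\mathcal{B}$. Your approach instead pulls back characters of $\mathcal{B}$ through $\phi$ and uses only the identification $\operatorname{Spec}(\mathcal{O}_q(U))=U$ from Theorem~\ref{tdecomO}, never unpacking the direct-sum decomposition of $f$. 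This is more streamlined and shows that the spectral mapping formula is really a formal consequence of the character description of $\sigma_{\operatorname{H}}$ together with $\operatorname{Spec}(\mathcal{O}_q(U))=U$; the paper's version, by contrast, makes the mechanism transparent---one sees concretely that the radical summand of $f$ contributes nothing to either side.
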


\begin{proof}
One needs to apply Theorem \ref{thFunCal} and the previous argument with the
noncommutative Harte spectrum.

Let us prove the spectral mapping formula for a tuple $f=\left(  f_{1}%
,\ldots,f_{m}\right)  $ in $\mathcal{O}_{q}\left(  U\right)  $. By Theorem
\ref{tdecomO}, we have $f_{i}\left(  x,y\right)  =g_{i}\left(  x\right)
+r_{i}\left(  x,y\right)  +h_{i}\left(  y\right)  $ with $g_{i}\left(
x\right)  \in\mathcal{O}\left(  U_{x}\right)  $, $r_{i}\left(  x,y\right)
\in\operatorname{Rad}\mathcal{O}_{q}\left(  U\right)  $, $h_{i}\left(
y\right)  \in\mathcal{I}_{\mathfrak{d}}\left(  U_{y}\right)  $. Thus $f\left(
x,y\right)  =g\left(  x\right)  +r\left(  x,y\right)  +h\left(  y\right)  $ is
the sum of the tuples. Since the functional calculus $\mathcal{O}_{q}\left(
U\right)  \rightarrow\mathcal{B}$ is an algebra homomorphism, it follows that
$r_{i}\left(  T,S\right)  \in\operatorname{Rad}\mathcal{B}$. Then
\begin{align*}
\sigma_{\operatorname{H}}\left(  f\left(  T,S\right)  \right)   &
=\sigma_{\operatorname{H}}\left(  f\left(  T,S\right)  ^{\sim}%
\operatorname{mod}\operatorname{Rad}\mathcal{B}\right)  =\sigma
_{\operatorname{H}}\left(  g\left(  T\right)  ^{\sim}+h\left(  S\right)
^{\sim}\right) \\
&  =\sigma_{\operatorname{H}}\left(  g\left(  T^{\sim}\right)  +h\left(
S^{\sim}\right)  \right)  =\left(  g+h\right)  \left(  \sigma
_{\operatorname{H}}\left(  T^{\sim},S^{\sim}\right)  \right)  =\left(
g+h\right)  \left(  \sigma_{\operatorname{H}}\left(  T,S\right)  \right) \\
&  =g\left(  \sigma_{\operatorname{H}}\left(  T\right)  \right)  +h\left(
\sigma_{\operatorname{H}}\left(  S\right)  \right)  =f\left(  \sigma
_{\operatorname{H}}\left(  T\right)  \cup\sigma_{\operatorname{H}}\left(
S\right)  \right)  =f\left(  \sigma_{\operatorname{H}}\left(  T,S\right)
\right)  .
\end{align*}
Notice that the action of every $f_{i}\left(  x,y\right)  $ on $U$ is given by
$\lambda\left(  f_{i}\left(  x,y\right)  \right)  $ for every $\lambda\in$
$\operatorname{Spec}\left(  \mathcal{O}_{q}\left(  U\right)  \right)  $ (see
Theorem \ref{tdecomO}).
\end{proof}

\subsection{Examples\label{subsecEx}}

Let us illustrate the assertion of Corollary \ref{corFCST} in the case of
Example \ref{exCom}. Namely, consider the shift operator $T\in\mathcal{B}%
\left(  \ell_{2}\right)  $, $T\left(  e_{n}\right)  =e_{n+1}$, and the
diagonal operator $S\in\mathcal{B}\left(  \ell_{2}\right)  $, $S\left(
e_{n}\right)  =q^{n}e_{n}$. The pair $\left(  T,S\right)  $ generates the
operator $q$-plane $\mathcal{B=A}_{q}$ in $\mathcal{B}\left(  \ell_{2}\right)
$, which is commutative modulo its Jacobson radical by Proposition
\ref{pAcom}. Notice that $\mathbb{D}_{1}=\sigma\left(  T\right)
\subseteq\sigma_{\operatorname{H}}\left(  T\right)  $ and $\left\{  1\right\}
_{q}=\left\{  q^{n}:n\in\mathbb{Z}_{+}\right\}  \cup\left\{  0\right\}
=\sigma\left(  S\right)  \subseteq\sigma_{\operatorname{H}}\left(  S\right)
$. But $\left\Vert T\right\Vert =1$, therefore for every $\lambda
\notin\mathbb{D}_{1}$ we have $\left(  \lambda-T\right)  ^{-1}=\sum
_{n\in\mathbb{Z}_{+}}\lambda^{-n-1}T^{n}\in\mathcal{A}_{q}$, which means that
$\sigma_{\operatorname{H}}\left(  T\right)  =\mathbb{D}_{1}$ too. In a similar
way, we have $\left\Vert S\right\Vert =1$ and $\sigma_{\operatorname{H}%
}\left(  S\right)  \subseteq\mathbb{D}_{1}$. Hence
\[
\mathbb{D}_{1}\cup\left\{  1\right\}  _{q}\subseteq\sigma_{\operatorname{H}%
}\left(  T,S\right)  =\mathbb{D}_{1}\cup\sigma_{\operatorname{H}}\left(
S\right)  \subseteq\mathbb{D}_{1}\cup\mathbb{D}_{1}\subseteq\mathbb{C}_{xy},
\]
and $\sigma_{\operatorname{H}}\left(  T,S\right)  $ is a quasicompact subset
of $\mathbb{C}_{xy}$. Let us consider the following $\left(  \mathfrak{q}%
,\mathfrak{d}\right)  $-open subset $U\subseteq\mathbb{C}_{xy}$ with
$U_{x}=B\left(  0,\left(  3/2\right)  ^{1/2}\right)  =U_{y}$, and let
$f\left(  x,y\right)  =\ln\left(  \frac{3}{2}+xy\right)  $ be an element of
the algebra $\mathcal{O}_{q}\left(  U\right)  $. Notice that
\begin{align*}
f\left(  x,y\right)   &  =\ln\left(  \frac{3}{2}+xy\right)  =\ln\left(
\frac{3}{2}\right)  +\sum_{n=1}^{\infty}\frac{\left(  -1\right)  ^{n+1}}%
{n}\left(  \frac{2}{3}\right)  ^{n}\left(  xy\right)  ^{n}\\
&  =\ln\left(  \frac{3}{2}\right)  +\sum_{n=1}^{\infty}\frac{\left(
-1\right)  ^{n+1}}{n}\left(  \frac{2}{3}\right)  ^{n}q^{n\left(  n-1\right)
/2}x^{n}y^{n}\in\mathcal{O}\left(  U_{x}\right)  \oplus\operatorname{Rad}%
\mathcal{O}_{q}\left(  U\right)
\end{align*}
by Theorem \ref{tdecomO}. Using Corollary \ref{corFCST}, we deduce that
$f\left(  T,S\right)  =\ln\left(  \frac{3}{2}+TS\right)  \in\mathcal{A}_{q}$
and%
\[
\sigma_{\operatorname{H}}\left(  f\left(  T,S\right)  \right)  =f\left(
\sigma_{\operatorname{H}}\left(  T,S\right)  \right)  =\left\{  \left(
\ln\left(  \frac{3}{2}\right)  ,0\right)  \right\}  \subseteq\mathbb{C}_{x}.
\]

Now consider the $\left(  \mathfrak{q},\mathfrak{d}\right)  $-open subset
$U\subseteq\mathbb{C}_{xy}$ with $U_{x}=B\left(  0,3/2\right)  =U_{y}$, and
put
\[
f\left(  x,y\right)  =\ln\left(  \frac{3}{2}+x\right)  +\sum_{n=1}^{\infty
}\left(  \frac{2}{3}\right)  ^{n}\left(  \ln\left(  \frac{3}{2}+\frac{1}%
{n}+x\right)  -\ln\left(  \frac{3}{2}+\frac{1}{n}\right)  \right)  y^{n}%
+\frac{y}{y-3/2}%
\]
to be a formal series in $\mathcal{O}\left(  U_{x}\right)  \left[  \left[
y\right]  \right]  $. Note that
\[
f_{n}\left(  x\right)  =\left(  \frac{2}{3}\right)  ^{n}\left(  \ln\left(
\frac{3}{2}+\frac{1}{n}+x\right)  -\ln\left(  \frac{3}{2}+\frac{1}{n}\right)
\right)  =\left(  \frac{2}{3}\right)  ^{n}\sum_{k\in\mathbb{N}}\frac{\left(
-1\right)  ^{k+1}}{k}\left(  \frac{3}{2}+\frac{1}{n}\right)  ^{-k}x^{k}%
\]
with
\[
\left\Vert f_{n}\right\Vert _{\rho_{x}}\leq\left(  \frac{2}{3}\right)
^{n}\sum_{k\in\mathbb{N}}\frac{1}{k}\left(  \frac{3}{2}+\frac{1}{n}\right)
^{-k}\rho_{x}^{k}\leq-\left(  \frac{2}{3}\right)  ^{n}\ln\left(  1-\frac
{\rho_{x}}{\frac{3}{2}+\frac{1}{n}}\right)
\]
for all $n\in\mathbb{N}$ and $\rho_{x}<3/2$, that is, $\left\{  f_{n}\left(
x\right)  \right\}  \subseteq\mathcal{I}_{\mathfrak{q}}\left(  U_{x}\right)
$. If $r\left(  x,y\right)  =\sum_{n=1}^{\infty}f_{n}\left(  x\right)  y^{n}$
and $\rho<3/2$, then
\[
p_{\rho}\left(  r\left(  x,y\right)  \right)  =\sum_{n}\left\Vert
f_{n}\right\Vert _{\rho_{x}}\rho_{y}^{n}\leq\sum_{n}-\left(  \frac{2}%
{3}\right)  ^{n}\ln\left(  1-\frac{\rho_{x}}{\frac{3}{2}+\frac{1}{n}}\right)
\rho_{y}^{n}%
\]
with
\[
\lim\sup_{n}\left(  -\left(  \frac{2}{3}\right)  ^{n}\ln\left(  1-\frac
{\rho_{x}}{\frac{3}{2}+\frac{1}{n}}\right)  \right)  ^{1/n}=\frac{2}{3}.
\]
Hence $p_{\rho}\left(  f\right)  <\infty$ for all $\rho<3/2$. By Theorem
\ref{tdecomO}, we conclude that $r\left(  x,y\right)  \in\operatorname{Rad}%
\mathcal{O}_{q}\left(  U\right)  $ and
\[
f\left(  x,y\right)  =\ln\left(  \frac{3}{2}+x\right)  +r\left(  x,y\right)
+\frac{y}{y-3/2}\in\mathcal{O}\left(  U_{x}\right)  \oplus\operatorname{Rad}%
\mathcal{O}_{q}\left(  U\right)  \oplus\mathcal{I}_{y}\left(  U_{y}\right)  ,
\]
that is, $f\left(  x,y\right)  \in\mathcal{O}_{q}\left(  U\right)  $. Using
Corollary \ref{corFCST}, we deduce that
\[
\sigma_{\operatorname{H}}\left(  f\left(  T,S\right)  \right)  =\left\{
\left(  \ln\left(  \frac{3}{2}+z\right)  ,0\right)  ,\left(  0,\frac{w}%
{w-3/2}\right)  :z\in\mathbb{D}_{1},w\in\sigma_{\operatorname{H}}\left(
S\right)  \right\}  \subseteq\mathbb{C}_{xy}.
\]

\subsection{Concluding remarks on the Taylor spectrum and localizations}

The quantum plane $\mathfrak{A}_{q}$ possesses (see \cite{Tah}, \cite{WM}) the
following free $\mathfrak{A}_{q}$-bimodule resolution
\begin{equation}
0\leftarrow\mathfrak{A}_{q}\overset{\pi}{\longleftarrow}\mathfrak{A}%
_{q}\otimes\mathfrak{A}_{q}\overset{d_{0}}{\longleftarrow}\mathfrak{A}%
_{q}\otimes\mathbb{C}^{2}\otimes\mathfrak{A}_{q}\overset{d_{1}}{\longleftarrow
}\mathfrak{A}_{q}\otimes\wedge^{2}\mathbb{C}^{2}\otimes\mathfrak{A}%
_{q}\leftarrow0, \label{Ures}%
\end{equation}
with the mapping $\pi\left(  a\otimes b\right)  =ab$ and the differentials
\begin{align*}
d_{0}\left(  a\otimes e_{1}\otimes b\right)   &  =a\otimes xb-ax\otimes
b,\quad d_{0}\left(  a\otimes e_{2}\otimes b\right)  =a\otimes yb-ay\otimes
b,\\
d_{1}\left(  a\otimes e_{1}\wedge e_{2}\otimes b\right)   &  =a\otimes
e_{2}\otimes xb-qax\otimes e_{2}\otimes b-qa\otimes e_{1}\otimes yb+ay\otimes
e_{1}\otimes b,
\end{align*}
where $a,b\in\mathfrak{A}_{q}$ and $\left(  e_{1},e_{2}\right)  $ is the
standard basis in $\mathbb{C}^{2}$. One of the central results of \cite{Pir}
asserts that the canonical embedding $\mathfrak{A}_{q}\rightarrow
\mathcal{O}_{q}\left(  \mathbb{C}_{xy}\right)  $ is a localization in the
sense of Taylor \cite{Tay2}. Using the resolution (\ref{Ures}), we derive that
the following complex%
\[
0\leftarrow\mathcal{O}_{q}\left(  \mathbb{C}_{xy}\right)  \overset{\pi
}{\longleftarrow}\mathcal{O}_{q}\left(  \mathbb{C}_{xy}\right)
^{\widehat{\otimes}2}\overset{d_{0}}{\longleftarrow}\mathcal{O}_{q}\left(
\mathbb{C}_{xy}\right)  \widehat{\otimes}\mathbb{C}^{2}\widehat{\otimes
}\mathcal{O}_{q}\left(  \mathbb{C}_{xy}\right)  \overset{d_{1}}{\longleftarrow
}\mathcal{O}_{q}\left(  \mathbb{C}_{xy}\right)  \widehat{\otimes}\wedge
^{2}\mathbb{C}^{2}\widehat{\otimes}\mathcal{O}_{q}\left(  \mathbb{C}%
_{xy}\right)  \leftarrow0
\]
is admissible, which provides a free $\mathcal{O}_{q}\left(  \mathbb{C}%
_{xy}\right)  $-bimodule resolution of $\mathcal{O}_{q}\left(  \mathbb{C}%
_{xy}\right)  $ (see \cite[3.2]{HelHom}). For the differentials of the
resolution one can use the following matrix representations
\[
d_{0}=\left[
\begin{array}
[c]{cc}%
1\otimes L_{x}-R_{x}\otimes1 & 1\otimes L_{y}-R_{y}\otimes1
\end{array}
\right]  \text{, }d_{1}=\left[
\begin{array}
[c]{c}%
R_{y}\otimes1-q1\otimes L_{y}\\
1\otimes L_{x}-qR_{x}\otimes1
\end{array}
\right]  ,
\]
where $L$ and $R$ indicate to the left and right multiplication operators on
$\mathcal{O}_{q}\left(  \mathbb{C}_{xy}\right)  $. It follows that every
Fr\'{e}chet $\mathcal{O}_{q}\left(  \mathbb{C}_{xy}\right)  $-bimodule
possesses a similar resolution. By Lemma \ref{lemQC4}, the Fr\'{e}chet
$\widehat{\otimes}$-algebra $\mathcal{O}_{q}\left(  U\right)  $ over a
$\left(  \mathfrak{q},\mathfrak{d}\right)  $\textit{-}open subset
$U\subseteq\mathbb{C}_{xy}$ has the same $\mathcal{O}_{q}\left(  U\right)
$-bimodule resolution. If $X$ is a left Fr\'{e}chet $\mathcal{O}_{q}\left(
U\right)  $-module, then by applying the functor $\circ\underset{\mathcal{O}%
_{q}\left(  U\right)  }{\widehat{\otimes}}X$ to the just indicated bimodule
resolution of $\mathcal{O}_{q}\left(  U\right)  $ we obtain a similar free
left $\mathcal{O}_{q}\left(  U\right)  $-module resolution for $X$. Namely,
the following complex
\begin{equation}
0\leftarrow X\overset{\pi^{\left(  X\right)  }}{\longleftarrow}\mathcal{O}%
_{q}\left(  U\right)  \widehat{\otimes}X\overset{d_{0}^{\left(  X\right)
}}{\longleftarrow}\mathcal{O}_{q}\left(  U\right)  \widehat{\otimes}%
\mathbb{C}^{2}\widehat{\otimes}X\overset{d_{1}^{\left(  X\right)
}}{\longleftarrow}\mathcal{O}_{q}\left(  U\right)  \widehat{\otimes}\wedge
^{2}\mathbb{C}^{2}\widehat{\otimes}X\leftarrow0, \label{reX}%
\end{equation}
is admissible, where $\pi^{\left(  X\right)  }\left(  f\otimes\xi\right)
=f\xi$ and
\begin{align*}
d_{0}^{\left(  X\right)  }\left(  f\otimes e_{1}\otimes\xi\right)   &
=f\otimes x\xi-ax\otimes\xi,\quad d_{0}\left(  f\otimes e_{2}\otimes
\xi\right)  =f\otimes y\xi-fy\otimes\xi,\\
d_{1}\left(  f\otimes e_{1}\wedge e_{2}\otimes\xi\right)   &  =f\otimes
e_{2}\otimes x\xi-qfx\otimes e_{2}\otimes\xi-qf\otimes e_{1}\otimes
y\xi+fy\otimes e_{1}\otimes\xi,
\end{align*}
for all $f\in\mathcal{O}_{q}\left(  U\right)  $ and $\xi\in X$.

Now let $X$ be a left Banach $\mathfrak{A}_{q}$-module such that the
generators $x$ and $y$ of $\mathfrak{A}_{q}$ act on $X$ as the bounded linear
operators $T,S\in\mathcal{B}\left(  X\right)  $ such that $TS=q^{-1}ST$. Since
$\mathcal{O}_{q}\left(  \mathbb{C}_{xy}\right)  $ is the Arens-Michael
envelope of $\mathfrak{A}_{q}$ (see Subsection \ref{sAME}), $X$ turns out to
be a left Banach $\mathcal{O}_{q}\left(  \mathbb{C}_{xy}\right)  $-module by
extending the original $\mathfrak{A}_{q}$-module action on $X$.

One can introduce the spectrum of a left Banach $\mathcal{O}_{q}\left(
\mathbb{C}_{xy}\right)  $-module $X$ within the general framework of the
spectral theory of a left module over the algebra of global sections of a
Fr\'{e}chet sheaf \cite{Dos}, \cite{DosJOT10}. In the commutative case that
approach is due to M. Putinar \cite{Put}. That is the case of $q=1$.

\textit{The resolvent set }$\operatorname{res}\left(  T,S\right)  $ of these
operators (or the module $X$) is defined as a set of those $\gamma
\in\operatorname{Spec}\left(  \mathcal{O}_{q}\left(  \mathbb{C}_{xy}\right)
\right)  $ such that $\mathbb{C}\left(  \gamma\right)  \perp_{\mathcal{O}%
_{q}\left(  \mathbb{C}_{xy}\right)  }X$. The set
\[
\sigma\left(  T,S\right)  =\operatorname{Spec}\left(  \mathcal{O}_{q}\left(
\mathbb{C}_{xy}\right)  \right)  \backslash\operatorname{res}\left(
T,S\right)
\]
is called\textit{ the joint spectrum of the operator pair} $\left(
T,S\right)  $ (or the module $X$).

Recall that $\mathbb{C}\left(  \gamma\right)  \perp_{\mathcal{O}_{q}\left(
\mathbb{C}_{xy}\right)  }X$ means that $\operatorname{Tor}_{k}^{\mathcal{O}%
_{q}\left(  \mathbb{C}_{xy}\right)  }\left(  \mathbb{C}\left(  \gamma\right)
,X\right)  =\left\{  0\right\}  $ for all $k\geq0$. The homology groups
$\operatorname{Tor}_{k}^{\mathcal{O}_{q}\left(  \mathbb{C}_{xy}\right)
}\left(  \mathbb{C}\left(  \gamma\right)  ,X\right)  $ can be calculated by
means of the resolution (\ref{reX}). By applying the functor $\mathbb{C}%
\left(  \gamma\right)  \underset{\mathcal{O}_{q}\left(  \mathbb{C}%
_{xy}\right)  }{\widehat{\otimes}}\circ$ to the resolution (\ref{reX}), we
derive that $\operatorname{Tor}_{k}^{\mathcal{O}_{q}\left(  \mathbb{C}%
_{xy}\right)  }\left(  \mathbb{C}\left(  \gamma\right)  ,X\right)  $ are the
homology groups of the following complex
\[
0\leftarrow X\overset{d_{0,\gamma}}{\longleftarrow}\mathbb{C}^{2}\otimes
X\overset{d_{1,\gamma}}{\longleftarrow}\wedge^{2}\mathbb{C}^{2}\otimes
X\leftarrow0
\]
where
\begin{align*}
d_{0,\gamma}\left(  e_{1}\otimes\xi\right)   &  =\left(  T-\gamma\left(
x\right)  \right)  \xi,\quad d_{0,\gamma}\left(  e_{2}\otimes\eta\right)
=\left(  S-\gamma\left(  y\right)  \right)  \eta,\\
d_{1,\gamma}\left(  e_{1}\wedge e_{2}\otimes\xi\right)   &  =-e_{1}%
\otimes\left(  \gamma\left(  y\right)  -qS\right)  \xi+e_{2}\otimes\left(
T-q\gamma\left(  x\right)  \right)  \xi.
\end{align*}
One can use its cochain version in terms of the column and row representations
of its differentials. Namely we come up with the following parametrized over
$\mathbb{C}_{xy}$ Banach space complex
\begin{equation}
0\rightarrow X\overset{d_{\gamma}^{0}}{\longrightarrow}X\oplus
X\overset{d_{\gamma}^{1}}{\longrightarrow}X\rightarrow0 \label{Xkos}%
\end{equation}
with the differentials
\[
d_{\gamma}^{0}=\left[
\begin{tabular}
[c]{l}%
$\gamma\left(  y\right)  -qS$\\
$T-q\gamma\left(  x\right)  $%
\end{tabular}
\ \ \right]  \quad\text{and}\quad d_{\gamma}^{1}=\left[
\begin{tabular}
[c]{ll}%
$T-\gamma\left(  x\right)  $ & $S-\gamma\left(  y\right)  $%
\end{tabular}
\ \ \right]  .
\]
One can consider the spectra (see \cite{Tay}, \cite{DosJOT1} and
\cite{DosAJM}) of the parametrized Banach space complex (\ref{Xkos}). By
Proposition \ref{tO}, we have $\operatorname{Spec}\left(  \mathcal{O}%
_{q}\left(  \mathbb{C}_{xy}\right)  \right)  =\mathbb{C}_{xy}=\mathbb{C}%
_{x}\cup\mathbb{C}_{y}$. It follows that
\begin{align*}
\sigma\left(  T,S\right)   &  =\sigma_{x}\left(  T,S\right)  \cup\sigma
_{y}\left(  T,S\right)  \text{ with}\\
\sigma_{x}\left(  T,S\right)   &  =\sigma\left(  T,S\right)  \cap
\mathbb{C}_{x}\text{ and }\sigma_{y}\left(  T,S\right)  =\sigma\left(
T,S\right)  \cap\mathbb{C}_{y}.
\end{align*}
\quad Thus we have the analog of the Taylor spectrum of the pair $\left(
T,S\right)  $, but lack of noncommutative Fr\'{e}chet $\widehat{\otimes}%
$-algebra sheaf makes it restrictive in applications. One can use the formal
completion of $\mathcal{O}_{q}$ to apply the general framework of the
noncommutative functional calculus from \cite{DosJOT10}, but that would impose
an extra condition for $TS$ to be a nilpotent operator instead of a
quasinilpotent one (see Lemma \ref{lAq}). As in the case of a nilpotent Lie
algebra of operators \cite{DosPOMI}, a general picture of an operator quantum
plane calculus can be depicted by means of the presheaf of noncommutative
Fr\'{e}chet $\widehat{\otimes}$-algebras $\mathcal{O}_{q}$ shown above.

\end{document}